%% LyX 2.0.3 created this file.  For more info, see http://www.lyx.org/.
%% Do not edit unless you really know what you are doing.
\documentclass[oneside,english]{amsart}
\usepackage[T1]{fontenc}
\usepackage[latin9]{inputenc}
\usepackage{geometry}
\geometry{verbose,tmargin=3cm,bmargin=3cm,lmargin=2cm,rmargin=2cm}
\usepackage{array}
\usepackage{float}
\usepackage{multirow}
\usepackage{amsthm}
\usepackage{amstext}
\usepackage{amssymb}
\usepackage{graphicx}
\usepackage{esint}

\makeatletter

%%%%%%%%%%%%%%%%%%%%%%%%%%%%%% LyX specific LaTeX commands.
%% Because html converters don't know tabularnewline
\providecommand{\tabularnewline}{\\}

%%%%%%%%%%%%%%%%%%%%%%%%%%%%%% Textclass specific LaTeX commands.
\numberwithin{equation}{section}
\numberwithin{figure}{section}
\theoremstyle{plain}
\newtheorem{thm}{\protect\theoremname}
  \theoremstyle{definition}
  \newtheorem{defn}[thm]{\protect\definitionname}
  \theoremstyle{plain}
  \newtheorem{prop}[thm]{\protect\propositionname}
  \theoremstyle{remark}
  \newtheorem{rem}[thm]{\protect\remarkname}
  \theoremstyle{plain}
  \newtheorem{lem}[thm]{\protect\lemmaname}
  \theoremstyle{plain}
  \newtheorem{conjecture}[thm]{\protect\conjecturename}

%%%%%%%%%%%%%%%%%%%%%%%%%%%%%% User specified LaTeX commands.
\usepackage{shuffle}

\makeatother

  \providecommand{\conjecturename}{Conjecture}
  \providecommand{\definitionname}{Definition}
  \providecommand{\lemmaname}{Lemma}
  \providecommand{\propositionname}{Proposition}
  \providecommand{\remarkname}{Remark}
\providecommand{\theoremname}{Theorem}

\begin{document}
\address[Minoru Hirose]{Multiple Zeta Research Center, Kyushu University}
\email{m-hirose@math.kyushu-u.ac.jp}
\address[Nobuo Sato]{National Center for Theoretical Sciences, National Taiwan University}
\email{saton@ncts.ntu.edu.tw}
\subjclass[2010]{Primary 11M32, Secondary 33E20}

\title[Iterated integrals on $\mathbb{P}^{1}\setminus\{0,1,\infty,z\}$ and
a class of relations of MZVs ]{Iterated integrals on $\mathbb{P}^{1}\setminus\{0,1,\infty,z\}$
and a class of relations among multiple zeta values}

\author{Minoru Hirose and Nobuo Sato}
\begin{abstract}
In this paper we consider iterated integrals on $\mathbb{P}^{1}\setminus\{0,1,\infty,z\}$
and define a class of $\mathbb{Q}$-linear relations among them, which
arises from the differential structure of the iterated integrals with
respect to $z$. We then define a new class of $\mathbb{Q}$-linear
relations among the multiple zeta values by taking their limits of
$z\rightarrow1$, which we call \emph{confluence relations} (i.e.,
the relations obtained by the confluence of two punctured points).
One of the significance of the confluence relations is that it gives
a rich family and seems to exhaust all the linear relations among
the multiple zeta values. As a good reason for this, we show that
confluence relations imply both the regularized double shuffle relations
and the duality relations.
\end{abstract}

\keywords{multiple zeta values, iterated integrals, double shuffle relation,
regularized double shuffle relation, extended double shuffle relation,
duality relation, multiple logarithms, hyperlogarithms}

\maketitle

\section{Introduction}

The multiple zeta values, MZVs in short, are the real numbers defined
by the multiple Dirichlet series 
\[
\zeta(k_{1},\ldots,k_{d})=\sum_{0<m_{1}<\cdots<m_{d}}\frac{1}{m_{1}^{k_{1}}\cdots m_{d}^{k_{d}}}
\]
or equivalent iterated integral
\[
\zeta(k_{1},\dots,k_{d})=(-1)^{d}I(0;1,\{0\}^{k_{1}-1},\dots,1,\{0\}^{k_{d}-1};1)
\]
where
\[
I(0;a_{1},\dots,a_{n};1)=\int_{0<t_{1}<\cdots<t_{n}<1}\prod_{i=1}^{n}\frac{dt_{i}}{t_{i}-a_{i}}.
\]
Here $(k_{1},\ldots,k_{d})$ is called the index, $k:=k_{1}+\cdots+k_{d}$
is called the weight and $d$ is called the depth of the MZV. We assume
that $k_{1},\ldots,k_{d-1}\geq1$ and $k_{d}\geq2$ for convergence
of the series/integral. It is known that there are many $\mathbb{Q}$-linear
relations among the multiple zeta values, and their structure is one
of the main interest of the study of multiple zeta values.

There are three known classes of linear relations among the MZVs which
are conjectured to exhaust all the linear relations among them:
\begin{itemize}
\item Associator relations
\item Regularized double shuffle relations (RDS)
\item Kawashima's relations \cite{Kawashima} (when the product is expanded
by the shuffle relation)
\end{itemize}
All three families above come from different backgrounds. The associator
relations come from geometric relations of Drinfel'd associator of
KZ-equation. The regularized double shuffle relations come from the
series and iterated integral expression of multiple zeta values. Kawashima's
relations come from the Newton series which interpolate the multiple
harmonic sums.

The first purpose of this paper is to define a new class of linear
relations among MZVs which we call \emph{confluence relations} based
on the theory of iterated integrals on $\mathbb{P}^{1}\setminus\{0,1,\infty,z\}$,
and the second purpose is to show that this class includes the regularized
double shuffle relations (and the duality relations). Clearly, the
latter result implies that ``confluence relations'' are also expected
to exhaust all linear relations of MZVs. Together with the known inclusion
between Kawashima relations, Associator relations, RDS plus duality
relations, the implication relations of the classes can be summarized
as follows.

\begin{center}
\includegraphics[scale=0.8]{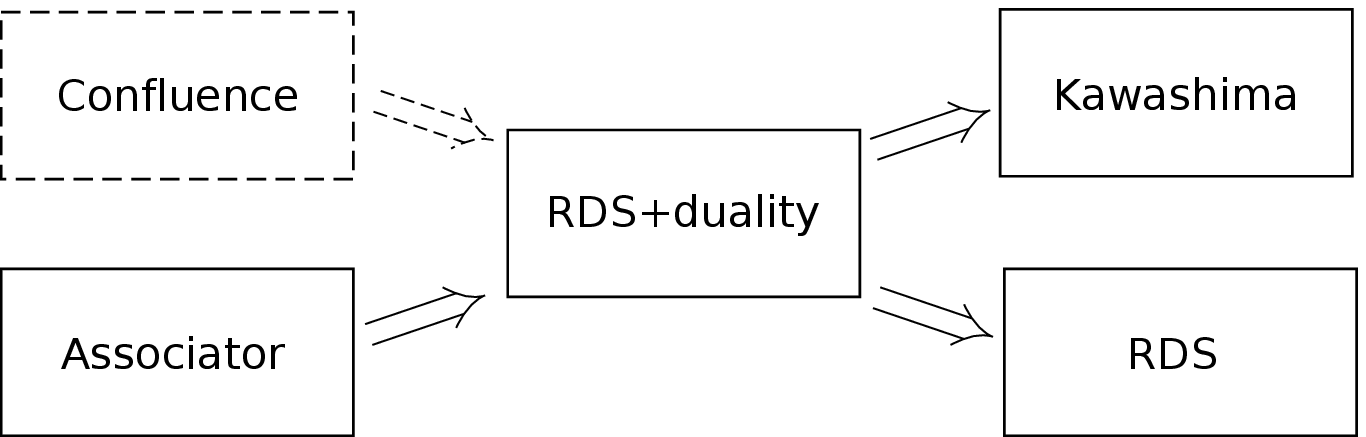}
\par\end{center}

Here, the implication $({\rm Associator})\Rightarrow({\rm RDS})$
was proved by Furusho \cite{Furusho_Associator_DSR}, and the implication
$({\rm RDS}+{\rm duality})\Rightarrow({\rm Kawashima})$ was proved
by Kaneko-Yamamoto \cite{Kaneko_Yamamoto_integralseries}.

The sketch of our method is as follows. To introduce the confluence
relation, we first consider the holomorphic function of $z\in\mathbb{C}\setminus[0,1]$
defined by the iterated integral
\[
I(0;a_{1},\dots,a_{n};1)\ \ \ \ (a_{1},\dots,a_{n}\in\{0,1,z\}).
\]
Then the derivatives of these functions are again expressible in terms
of the functions of the same form. The \emph{standard relations} is
a class of linear relations of such holomorphic functions which arise
from this differential structure. Since the limit $z\to1$ of any
relation among these functions should give a relation among MZVs,
we define the ``confluence relation'' by the limit $z\to1$ of the
``standard relations'', which is the basic idea of the confluence
relation. However, as the limit does not always exist, we need to
invent some tricks/techniques to deal with the behavior of divergence
and to obtain a suitable meaningful substitute of the limit. Next
we prove that the confluence relations includes RDS and the duality.
The key to these proofs are the differential formulas of the shuffle
and the stuffle products and the duality (Theorem \ref{thm:Three_applications},
\cite{HS_AlgebraicDiff}).

In Section \ref{sec:Standard-relations} we introduce some algebraic
settings and define the standard relations of iterated integrals (Definition
\ref{def:ISD}, Theorem \ref{thm:ISD_is_relation}). Then in Section
\ref{sec:Confluence-relations} we formulate the confluence relations
of MZVs (Definition \ref{Def:CF}, Theorem \ref{thm:main_relation}).
In Section \ref{sec:Regularized-double-shuffle} we show that the
confluence relations includes RDS (Theorem \ref{thm:RDS-from-ST})
and duality relations (Theorem \ref{thm:duality-from-ST}). At the
end, in Appendix \ref{sec:proofs-of-SomeComplementary} we give proofs
of complementary propositions and in Appendix \ref{sec:Table-of-relation}
we give a table of the confluence relations up to weight 4.

\subsection{Notations.}

We denote by $(A,\circ)$ an algebra $A$ equipped with the bilinear
product $\circ$. For $\mathbb{Z}$-modules $A$ and $B$, we denote
by $\mathrm{Hom}(A,B)$ the $\mathbb{Z}$-module formed by $\mathbb{Z}$-linear
maps from $A$ to $B$. Also, we denote $\mathrm{End}(A):=\mathrm{Hom}(A,A)$.

\section{\label{sec:Standard-relations}Standard relations of iterated integrals}

\subsection{Algebraic settings and differential formulas for iterated integrals}

Let $\mathcal{A}_{z}:=\mathbb{Z}\left\langle e_{0},e_{1},e_{z}\right\rangle $
(resp. $\mathcal{A}:=\mathbb{Z}\left\langle e_{0},e_{1}\right\rangle \subset\mathcal{A}_{z}$)
be a non-commutative ring generated by formal symbols $e_{0},e_{1},e_{z}$
(resp. $e_{0},e_{1}$). We define the subspaces $\mathcal{A}_{z}^{0}\subset\mathcal{A}_{z}^{1}\subset\mathcal{A}_{z}$
and $\mathcal{A}^{0}\subset\mathcal{A}^{1}\subset\mathcal{A}$ by
\begin{align*}
\mathcal{A}_{z}^{0} & :=\mathbb{Z}\oplus\mathbb{Z}e_{z}\oplus\bigoplus_{\substack{a\in\{1,z\}\\
b\in\{0,z\}
}
}e_{a}\mathcal{A}_{z}e_{b},\\
\mathcal{A}^{0} & :=\mathbb{Z}\oplus e_{1}\mathcal{A}e_{0}\quad\left(=\mathcal{A}\cap\mathcal{A}_{z}^{0}\right),\\
\mathcal{A}_{z}^{1} & :=\mathbb{Z}\oplus\bigoplus_{a\in\{1,z\}}e_{a}\mathcal{A}_{z},\\
\mathcal{A}^{1} & :=\mathbb{Z}\oplus e_{1}\mathcal{A}\quad\left(=\mathcal{A}\cap\mathcal{A}_{z}^{1}\right).
\end{align*}

For $z\in\mathbb{C}\setminus[0,1]$, we define a $\mathbb{Z}$-linear
map $L:\mathcal{A}_{z}^{0}\rightarrow\mathbb{C}$ by 

\[
L(e_{a_{1}}\cdots e_{a_{m}}):=\int_{0<t_{1}<\cdots<t_{m}<1}\prod_{i=1}^{m}\frac{dt_{i}}{t_{i}-a_{i}}.
\]
Note that, for $w\in\mathcal{A}_{z}^{0}$, $L(w)$ can be regarded
as a holomorphic function of $z$ on $\mathbb{C}\setminus[0,1]$.
In particular, the multiple zeta value is expressed as 
\[
\zeta(k_{1},\ldots,k_{d})=L((-e_{1})e_{0}^{k_{1}-1}\cdots(-e_{1})e_{0}^{k_{d}-1})
\]
by $L$. 
\begin{defn}
For $\alpha,\beta\in\{0,1,z\}$, we define a linear operator $\partial_{\alpha,\beta}$
on $\mathcal{A}_{z}$ by
\[
\partial_{\alpha,\beta}\left(e_{a_{1}}\cdots e_{a_{n}}\right):=\sum_{i=1}^{n}\left(\delta_{\{a_{i},a_{i+1}\},\{\alpha,\beta\}}-\delta_{\{a_{i-1},a_{i}\},\{\alpha,\beta\}}\right)e_{a_{1}}\cdots\widehat{e_{a_{i}}}\cdots e_{a_{n}}
\]
where $a_{0}=0,\, a_{n+1}=1$ and $\delta_{S,T}$ denotes the Kronecker
delta i.e.,
\[
\delta_{S,T}=\begin{cases}
1 & S=T\\
0 & S\neq T,
\end{cases}
\]
for sets $S$ and $T$.
\end{defn}
It is easy to check that $\partial_{\alpha,\beta}(\mathcal{A}_{z}^{0})\subset\mathcal{A}_{z}^{0}$.
Note that $\partial_{z,0}+\partial_{z,1}+\partial_{1,0}=0$ on $\mathcal{A}_{z}^{0}$
(see Proposition \ref{prop:deriv_sum_z0_z1_10}). 

The operator $\partial_{z,a}$ is related to the actual differentiation
$d/dz$ as follows.
\begin{prop}[{\cite[Theorem 2.1]{HIST}}]
\textup{\label{prop:differential formula} $\frac{d}{dz}L\left(w\right)=\sum_{a\in\{0,1\}}\frac{1}{z-a}L\left(\partial_{z,a}\left(w\right)\right).$}
\end{prop}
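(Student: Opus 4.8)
The plan is to differentiate the defining integral of $L(w)$ under the integral sign and then convert the outcome back into a combination of $L$-values by integrating out a single variable and using partial fractions. By $\mathbb{Z}$-linearity it is enough to treat a monomial $w=e_{a_1}\cdots e_{a_n}\in\mathcal{A}_z^0$. Since the simplex $\Delta=\{0<t_1<\cdots<t_n<1\}$ is fixed and compact while $z$ varies in compact subsets of $\mathbb{C}\setminus[0,1]$ (so that each $|t_i-z|$ stays bounded away from $0$), differentiation under the integral sign is legitimate; the integrand depends on $z$ only through the factors $(t_k-z)^{-1}$ with $a_k=z$, and $\frac{\partial}{\partial z}(t_k-z)^{-1}=(t_k-z)^{-2}$, so
\[
\frac{d}{dz}L(w)=\sum_{k:\,a_k=z}\ \int_{\Delta}\frac{1}{(t_k-z)^{2}}\prod_{j\ne k}\frac{1}{t_j-a_j}\,dt_1\cdots dt_n .
\]

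For each $k$ with $a_k=z$ I would integrate out the single variable $t_k$, which ranges over $(t_{k-1},t_{k+1})$ under the conventions $t_0=0$, $t_{n+1}=1$; as $z\notin[0,1]$ this is elementary and equals $\frac{1}{t_{k-1}-z}-\frac{1}{t_{k+1}-z}$. When $k$ is an endpoint this contributes the constants $\frac{1}{t_0-z}=-\frac1z$ or $-\frac{1}{t_{n+1}-z}=\frac1{z-1}$; otherwise the surviving factor $\frac{1}{t_{k\mp1}-z}$ sits at the same variable as the factor $\frac{1}{t_{k\mp1}-a_{k\mp1}}$ that is already present, and — provided $a_{k\mp1}\ne z$ — the partial fraction identity $\frac{1}{(t-z)(t-a)}=\frac{1}{z-a}\big(\frac{1}{t-z}-\frac{1}{t-a}\big)$ with $a\in\{0,1\}$ restores first-order forms. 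Every term produced is then the integrand of an $L$-value of a word obtained from $w$ by deleting one letter; one subtlety to notice is that the term in which $\frac{1}{t_{k-1}-z}$ survives corresponds to deleting the $(k-1)$-st letter, not the $k$-th, because $a_k=z$ already. Collecting everything and reading off, position by position, the coefficient with which the word ``$w$ with its $i$-th letter deleted'' appears, one obtains exactly $\sum_{a\in\{0,1\}}\frac{1}{z-a}\big(\delta_{\{a_i,a_{i+1}\},\{z,a\}}-\delta_{\{a_{i-1},a_i\},\{z,a\}}\big)$ — the ``$+\delta$'' coming from pairing a letter with its right neighbour (or with $a_{n+1}=1$) and the ``$-\delta$'' from pairing with its left neighbour (or with $a_0=0$) — which is precisely what $\sum_{a\in\{0,1\}}\frac{1}{z-a}\,\partial_{z,a}$ prescribes. (If $w$ contains no $e_z$, both sides are manifestly $0$.)

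The step I expect to be the main obstacle is the bookkeeping when $w$ has consecutive letters $e_z$: recombining $\frac{1}{t_{k-1}-z}$ with $\frac{1}{t_{k-1}-a_{k-1}}$ when $a_{k-1}=z$ produces a second-order pole $\frac{1}{(t_{k-1}-z)^{2}}$ rather than a first-order form. Since $\frac{1}{(t-z)^{2}}=-\frac{d}{dt}\frac{1}{t-z}$, one simply integrates out that variable as well and gets a fresh difference $\frac{1}{t_{k-2}-z}-\frac{1}{t_{k+1}-z}$, so the reduction propagates along each maximal block of $e_z$'s and terminates — each step shortening the word by one letter — once it reaches the first non-$e_z$ neighbour on either side, or the endpoint $0$ or $1$. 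Keeping track of the signs accumulated during this telescoping, and of the fact that distinct positions of $w$ may yield the same deleted word (handled by linearity), is the bulk of the combinatorial work; a convenient way to organise it is as repeated use of the integration-by-parts formula for iterated integrals with $g(t)=-\frac{1}{t-z}$, for which $dg=\frac{dt}{(t-z)^{2}}=\frac{\partial}{\partial z}\frac{dt}{t-z}$ and $g\cdot\frac{dt}{t-z}=-dg$. A secondary technical point is that certain iterated integrals arising at intermediate stages are formally divergent (a leading $\frac{dt}{t}$ or a trailing $\frac{dt}{t-1}$); these always occur in pairs with opposite sign and equal coefficient, so one keeps them paired throughout, or works in the shuffle-regularized framework, and — since $\partial_{z,a}(\mathcal{A}_z^0)\subset\mathcal{A}_z^0$ — they drop out of the final expression, completing the identification with $\sum_{a\in\{0,1\}}\frac{1}{z-a}L(\partial_{z,a}(w))$.
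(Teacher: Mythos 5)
A preliminary remark: the paper does not prove this proposition --- it is quoted from \cite[Theorem 2.1]{HIST} --- so there is no in-paper argument to compare against, and I assess your proof on its own. Your route (differentiate under the integral sign, integrate out each $t_k$ with $a_k=z$, and reassemble by partial fractions) is the natural direct proof, and it is correct: your position-by-position count of the coefficient of the word with the $i$-th letter deleted does reproduce $\sum_{a\in\{0,1\}}\frac{1}{z-a}\bigl(\delta_{\{a_i,a_{i+1}\},\{z,a\}}-\delta_{\{a_{i-1},a_i\},\{z,a\}}\bigr)$, including the boundary contributions $\frac{1}{t_0-z}=-\frac1z$ and $-\frac{1}{t_{n+1}-z}=\frac{1}{z-1}$ and the subtlety that the surviving factor $\frac{1}{t_{k\mp1}-z}$ encodes deletion of the neighbour rather than of $k$ itself.

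Two comments on the points you flag as delicate. First, the block-of-$e_z$'s case is lighter than your telescoping suggests: if $a_k=a_{k+1}=z$, the second-order pole $-\frac{1}{(t_{k+1}-z)^2}$ produced by integrating out $t_k$ and the second-order pole $+\frac{1}{(t_k-z)^2}$ produced by integrating out $t_{k+1}$ are, after renaming the surviving middle variable, the \emph{same} $(n-1)$-fold integral with opposite signs, so they cancel outright; no further integration or propagation along the block is needed, and this cancellation is exactly what the identity $\delta_{\{z,z\},\{z,a\}}=0$ for $a\in\{0,1\}$ records on the combinatorial side. (Your propagation scheme would also terminate, but it produces words shorter by more than one letter which must then cancel across different starting positions $k$ --- extra bookkeeping you can avoid.) Second, your claim that the divergent pieces occur in cancelling pairs is true but deserves a one-line justification: since $w\in\mathcal{A}_z^0$ forces $a_1\in\{1,z\}$ and $a_n\in\{0,z\}$, a deleted word can only diverge if it is $w$ with its first letter removed and $a_2=0$ (leading $e_0$), or $w$ with its last letter removed and $a_{n-1}=1$ (trailing $e_1$); in the first case the only two sources are the boundary term $-\frac1z$ at $t_0=0$ and the $a=0$ partial fraction of the right-neighbour pairing, with coefficients $-\frac1z$ and $+\frac1z$, and both arise inside the single convergent integral attached to differentiating the position $k=1$, so one never actually splits a divergent integral. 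With these two points made precise your argument is a complete proof.
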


\subsection{Shuffle product, stuffle product, duality, and their algebraic differential
formulas}
\begin{defn}
The shuffle product $\shuffle:\mathcal{A}_{z}\times\mathcal{A}_{z}\to\mathcal{A}_{z}$
is a bilinear map defined by $w\shuffle1=1\shuffle w=w$ for $w\in\mathcal{A}_{z}$
and
\[
e_{a}u\shuffle e_{b}v=e_{a}(u\shuffle e_{b}v)+e_{b}(e_{a}u\shuffle v)\qquad(u,v\in\mathcal{A}_{z}).
\]

\end{defn}
Note that, for $?\in\{\emptyset,0,1\}$, $\mathcal{A}_{z}^{?}$ and
$\mathcal{A}^{?}$ become commutative rings by the shuffle product.

\begin{defn}[\cite{HS_AlgebraicDiff}]
\label{Stuffle product}The (generalized) stuffle product $*:\mathcal{A}\times\mathcal{A}_{z}\to\mathcal{A}_{z}$
is a bilinear map defined by $u*1=u,\,1*v=v$ for $u\in\mathcal{A}$,
$v\in\mathcal{A}_{z}$ and
\begin{align*}
e_{a}u*e_{b}v & =e_{ab}(u*e_{b}v+e_{a}u*v-e_{0}(u*v))\,\qquad(u\in\mathcal{A},v\in\mathcal{A}_{z}).
\end{align*}

\end{defn}
Note that, for $?\in\{\emptyset,0,1\},$ $\mathcal{A}^{?}$ becomes
a commutative ring by the stuffle product, and $\mathcal{A}_{z}^{?}$
becomes an $(\mathcal{A}^{?},*)$ module by the stuffle product (it
follows from the associativity of more generalized stuffle products
shown in \cite[Proposition 6]{HS_AlgebraicDiff}). See \cite{HS_AlgebraicDiff}
for the compatibility with the standard definition of the stuffle
product.

\begin{defn}
The duality map $\tau_{z}:\mathcal{A}_{z}\to\mathcal{A}_{z}$ is an
anti-automorphism (i.e., $\tau_{z}(uv)=\tau_{z}(v)\tau_{z}(u)$) defined
by $\tau_{z}(e_{0})=e_{z}-e_{1}$, $\tau_{z}(e_{1})=e_{z}-e_{0}$
and $\tau_{z}(e_{z})=e_{z}$.
\end{defn}
Note that $\tau_{z}(\mathcal{A}_{z}^{0})\subset\mathcal{A}_{z}^{0}$
from the definition.
\begin{defn}
For $w\in\mathcal{A}_{z}^{1}$, the shuffle regularization ${\rm reg}_{\shuffle}(w)\in\mathcal{A}_{z}^{0}$
is defined by ${\rm reg}_{\shuffle}(w):=w_{0}$ where $w_{0}$ is
defined by the unique expression
\[
w=\sum_{i=0}^{{\rm deg}(w)}w_{i}\shuffle e_{1}^{i}\ \ \ (w_{i}\in\mathcal{A}_{z}^{0}).
\]

\end{defn}
The following proposition is a fundamental property of the shuffle
and stuffle products and the duality map.
\begin{prop}
\label{prop:ShStDualIdentity}We have:
\begin{enumerate}
\item For $u,v\in\mathcal{A}_{z}^{0}$, 
\[
L(u\shuffle v)=L(u)L(v).
\]

\item For $u\in\mathcal{A}_{z}^{0}$ and $v\in\mathcal{A}_{z}^{0}$,
\[
L(u*v)=L(u)L(v).
\]

\item For $u\in\mathcal{A}_{z}^{0}$
\[
L(\tau_{z}(u))=L(u).
\]

\end{enumerate}
\end{prop}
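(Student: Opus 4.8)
The plan is to treat the three statements separately, since each one has a
classical analogue and the generalized setting here only requires checking
that the standard arguments survive the extra letter $e_z$. For part (1),
the shuffle identity, I would proceed exactly as in the classical case:
$L(e_{a_1}\cdots e_{a_m})$ is the iterated integral over the simplex
$0<t_1<\cdots<t_m<1$ of a product of $1$-forms $\omega_{a_i}=dt/(t-a_i)$,
and the shuffle product on words encodes precisely the decomposition of the
product of two simplices $\{0<s_1<\cdots<s_p<1\}\times\{0<u_1<\cdots<u_q<1\}$
into simplices indexed by the $(p,q)$-shuffles (up to the measure-zero
diagonals where some $s_i=u_j$). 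Multiplying the two iterated integrals and
applying Fubini therefore yields $L(u)L(v)=L(u\shuffle v)$. The only thing
to check is convergence: for $u,v\in\mathcal{A}_z^0$ each factor is a
genuine convergent integral (the endpoint conditions built into
$\mathcal{A}_z^0$ guarantee no divergence at $t=0$ or $t=1$, since $z\notin[0,1]$ poses no boundary issue), and
the shuffle of two elements of $\mathcal{A}_z^0$ again lies in
$\mathcal{A}_z^0$, so every term on the right is convergent as well.

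For part (2), the stuffle identity, I would pass to the series side. Here
$u\in\mathcal{A}_z^0$ but $v$ is only required to lie in $\mathcal{A}_z^0$
(with the product $u*v$ defined via the module structure of
$\mathcal{A}_z$ over $(\mathcal{A},*)$ recalled above). The plan is: first
reduce to the case $u\in\mathcal{A}^0$ by linearity and the fact that $u*v$
is only defined for $u\in\mathcal{A}$; then expand $L(u)$ and $L(v)$ as
(generalized) multiple polylogarithm-type series in the single variable
governed by $z$ — concretely $L$ sends an admissible word to a nested sum
whose summation variables carry weights coming from the letters $e_0,e_1,e_z$
— and observe that the product of two such nested sums decomposes, by
splitting according to whether the innermost indices coincide or are
ordered, into exactly the three terms appearing in the recursive definition
of $*$ in Definition \ref{Stuffle product}, the $-e_0(u*v)$ term accounting
for the diagonal. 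Since this is precisely the computation carried out in
\cite{HS_AlgebraicDiff} for the generalized stuffle product, I would cite
that reference for the combinatorial identity and only need to verify that
the series representation of $L$ converges on $\mathbb{C}\setminus[0,1]$,
which again follows from the admissibility conditions defining
$\mathcal{A}_z^0$ together with $|z-a|$ being bounded away from $0$.

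For part (3), the duality, the cleanest route is the change of variables
$t\mapsto 1-t$ in the iterated integral. Under $s_i=1-t_{m+1-i}$ the simplex
$0<t_1<\cdots<t_m<1$ is carried to $0<s_1<\cdots<s_m<1$, the order of the
forms is reversed (this produces the anti-automorphism), and each form
transforms as $dt/(t-a)\mapsto ds/(s-(1-a))$; tracking $0\mapsto 1$,
$1\mapsto 0$, and $z\mapsto 1-z$. One then has to reconcile the naive
substitution $a\mapsto 1-a$ on the letters with the stated map $\tau_z$
sending $e_0\mapsto e_z-e_1$, $e_1\mapsto e_z-e_0$, $e_z\mapsto e_z$; this
is done by the partial-fraction identity $\frac{1}{s-(1-a)}$ expressed
through the path from the origin. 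Concretely the cleanest formulation is the
path-reversal/composition formula for iterated integrals on
$\mathbb{P}^1\setminus\{0,1,\infty,z\}$: reversing the straight path from
$0$ to $1$ gives the path from $1$ to $0$, whose associated word in the
"canonical" alphabet is obtained by the substitution above, and re-expanding
in terms of $e_0,e_1,e_z$ produces $\tau_z$. I would either invoke this
standard path-reversal formula or, to keep things self-contained, prove
$L\circ\tau_z = L$ by showing both sides satisfy the same differential
equation in $z$ (using Proposition \ref{prop:differential formula} and the
fact that $\tau_z$ intertwines $\partial_{z,0},\partial_{z,1}$ with a
permutation of the $\partial_{\alpha,\beta}$) together with matching an
initial condition, e.g. a limiting value as $z\to\infty$.

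I expect part (3) to be the main obstacle. Parts (1) and (2) are essentially
bookkeeping on top of well-documented arguments (Chen's iterated-integral
shuffle and the Hoffman-type stuffle, in the generalized form of
\cite{HS_AlgebraicDiff}), whereas the duality statement requires genuinely
identifying the geometric operation "reverse the path / swap $0$ and $1$"
with the specific algebraic anti-automorphism $\tau_z$, including the subtle
point that the three-term images $e_z-e_1$ and $e_z-e_0$ are forced by
rewriting the pulled-back $1$-forms back into the chosen basis of
logarithmic forms — and one must check that this rewriting respects the
regularization/endpoint structure so that $\tau_z(\mathcal{A}_z^0)\subset
\mathcal{A}_z^0$ is compatible with the computation. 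If that identification
turns out to be delicate, the differential-equation argument sketched above
is a robust fallback since the needed intertwining of the $\partial_{\alpha,\beta}$'s
is a finite check.
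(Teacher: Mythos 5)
Parts (1) and (2) are fine and match what the paper does: the paper simply cites the shuffle/stuffle identities for hyperlogarithms (\cite{BBBL_stuffle}), and your simplex-decomposition sketch for (1) and series-side sketch for (2) are the standard proofs of exactly those cited facts. You also correctly noticed that in (2) the hypothesis must really be $u\in\mathcal{A}^{0}=\mathcal{A}\cap\mathcal{A}_{z}^{0}$ for $u*v$ to be defined, which is an (apparent) typo in the statement.

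Your primary route for (3), however, has a genuine gap: the substitution $t\mapsto 1-t$ sends the singular point $z$ to $1-z$, which for generic $z$ does not lie in $\{0,1,z\}$. The pulled-back form $ds/(s-(1-z))$ has its pole at a genuinely new location, so no partial-fraction identity can re-express it in the span of $ds/s$, $ds/(s-1)$, $ds/(s-z)$; the ``re-expanding in terms of $e_{0},e_{1},e_{z}$ produces $\tau_{z}$'' step cannot be carried out. The transformation that actually realizes $\tau_{z}$ (and is what \cite[Theorem 1.1]{HIST}, the reference the paper cites for (3), uses) is the M\"obius map exchanging $0\leftrightarrow 1$ and $z\leftrightarrow\infty$, e.g.\ $t\mapsto z(1-t)/(z-t)$: under it $d\log t$ pulls back to $d\log(1-t)-d\log(z-t)$, and combined with path reversal and the sign $(-1)^{m}$ this forces exactly $e_{0}\mapsto e_{z}-e_{1}$, $e_{1}\mapsto e_{z}-e_{0}$, $e_{z}\mapsto e_{z}$. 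Your fallback argument is sound and is in fact the route the paper's own Remark after Theorem \ref{thm:Three_applications} alludes to: induct on the length of $w$, use Proposition \ref{prop:differential formula} together with $\partial_{z,c}\circ\tau_{z}=\tau_{z}\circ\partial_{z,c}$ on $\mathcal{A}_{z}^{0}$ to see that $L(\tau_{z}(w))$ and $L(w)$ satisfy the same ODE in $z$, and match the value at $z\to\infty$. Just note that this initial condition is $L(\mathrm{Const}(\tau_{z}(w)))=L(\mathrm{Const}(w))$, and since $\mathrm{Const}\circ\tau_{z}=\tau_{\infty}\circ\mathrm{Const}$ it reduces to the classical duality of MZVs on the alphabet $\{e_{0},e_{1}\}$ --- where the $t\mapsto 1-t$ argument does work, because $\{0,1\}$ is stable under it --- so the base case is not free but is standard.
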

\begin{proof}
(1) and (2 is just a special case of the shuffle and stuffle product
identities of hyperlogarithms \cite{BBBL_stuffle}. For (3), see \cite[Theorem 1.1]{HIST}.
\end{proof}
The following theorem gives fundamental relations between the operators
$\shuffle$, $*$, $\tau_{z}$ and $\partial_{z,c}$.
\begin{thm}
\cite[Theorem 10]{HS_AlgebraicDiff}\label{thm:Three_applications}Let
$c\in\{0,1\}$.
\begin{enumerate}
\item For $u,v\in\mathcal{A}_{z}$,
\[
\partial_{z,c}(u\shuffle v)=(\partial_{z,c}u)\shuffle v+u\shuffle(\partial_{z,c}v).
\]

\item For $u\in\mathcal{A}^{1}$ and $v\in\mathcal{A}_{z}$, 
\[
\partial_{z,c}(u*v)=u*(\partial_{z,c}v).
\]

\item For $u\in\mathcal{A}_{z}^{0}$, 
\[
\tau_{z}^{-1}\circ\partial_{z,c}\circ\tau_{z}(u)=\partial_{z,c}u.
\]

\end{enumerate}
\end{thm}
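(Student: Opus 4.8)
The plan is to treat all three identities as purely combinatorial statements about words: reduce by $\mathbb{Z}$-(bi)linearity to monomials, then induct on the total degree using the recursive definitions of $\shuffle$, $*$ and $\tau_z$. The device that makes the inductions close up is to enlarge $\partial_{\alpha,\beta}$ to a family $\partial^{p,q}_{\alpha,\beta}$, for arbitrary ``boundary letters'' $p,q\in\{0,1,z\}$, defined by the same formula but with the conventions $a_0=p$ and $a_{n+1}=q$, so that $\partial_{\alpha,\beta}=\partial^{0,1}_{\alpha,\beta}$. One then has the prefix rule
\[
\partial^{p,q}_{\alpha,\beta}(e_{a}w)=\Bigl(\delta_{\{a,b\},\{\alpha,\beta\}}-\delta_{\{p,a\},\{\alpha,\beta\}}\Bigr)\,w+e_{a}\,\partial^{a,q}_{\alpha,\beta}(w),
\]
where $b$ is the first letter of $w$ (read as $q$ when $w=1$); this is exactly what lets one peel a leading letter off inside an induction. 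Conceptually, (1) says $\partial_{z,c}$ is the infinitesimal generator of the one-parameter family of shuffle-algebra automorphisms obtained by moving the puncture $z$, and (2) and (3) express that this motion is compatible with the stuffle product and with the duality involution.

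For (1): reducing to words $u,v$ and using $e_au'\shuffle e_bv'=e_a(u'\shuffle e_bv')+e_b(e_au'\shuffle v')$, I would apply the prefix rule to each summand and invoke the induction hypothesis for the two shuffles, which have smaller total degree. The content becomes more transparent if one rephrases $\partial_{z,c}$ on a shuffle word as a signed sum over its \emph{active gaps} — adjacent pairs of letters, or a letter paired with a virtual boundary letter $e_0$ or $e_1$, whose set of values is $\{z,c\}$ — each gap contributing $+(\text{delete its left letter})-(\text{delete its right letter})$. Classifying the gaps of a shuffle word of $u$ and $v$ as pure-$u$, pure-$v$, interface (one letter from each), or extreme-boundary, one checks — using the standard bijections between shuffle words with a prescribed consecutive pair and shuffle words of the remaining letters — that the pure-$u$ and $u$-side extreme-boundary contributions reassemble into $(\partial_{z,c}u)\shuffle v$, symmetrically for $v$, while the interface contributions cancel in pairs under the involution that swaps the two interface letters. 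The main obstacle is the bookkeeping with the extreme-boundary gaps, which are shared by $u$ and $v$ in a shuffle word and must be attributed to whichever supplies the extreme letter, together with the verification that the interface involution is sign-reversing and fixed-point-free.

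For (2): here $u\in\mathcal{A}^1$ contains no $e_z$, which is the crucial point. I would induct on $\deg u+\deg v$ from the stuffle recursion $e_au*e_bv=e_{ab}\bigl(u*e_bv+e_au*v-e_0(u*v)\bigr)$ — or, in the gap picture, over the quasi-shuffle words in $u*v$, whose letters are inherited from $u$ or from $v$ or else ``merged'', no merged letter being a new $e_z$ since $u$ has none. Applying $\partial^{p,q}_{z,c}$, peeling off the leading $e_{ab}$ by the prefix rule, and feeding the three inner products to the induction hypothesis, one has to check that the three terms of the recursion — in particular the correction $-e_0(u*v)$ — together with the boundary pair $\{p,ab\}$ created by the prefix $e_{ab}$ cancel in groups, after which the survivors organize into $u*(\partial^{p,q}_{z,c}v)$. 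The main obstacle here is precisely these group cancellations forced by the three-term recursion.

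For (3): first check on generators that $\tau_z^{2}=\mathrm{id}$ on $\mathcal{A}_z$ (for instance $\tau_z(\tau_z(e_0))=\tau_z(e_z-e_1)=e_z-(e_z-e_0)=e_0$), so the assertion becomes $\partial_{z,c}\circ\tau_z=\tau_z\circ\partial_{z,c}$ on $\mathcal{A}_z^{0}$. I would factor $\tau_z=\rho\circ\phi$, with $\rho$ the word-reversal anti-automorphism (fixing each letter) and $\phi$ the \emph{homomorphism} extending $e_0\mapsto e_z-e_1$, $e_1\mapsto e_z-e_0$, $e_z\mapsto e_z$ letterwise; here $\rho^{2}=\phi^{2}=\mathrm{id}$. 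Reversal preserves every internal adjacency and merely interchanges the two boundaries, and a short computation gives $\rho\circ\partial^{0,1}_{z,c}\circ\rho=-\partial^{1,0}_{z,c}$ (the sign appears because the two $\delta$'s in the definition trade places). Since $\phi$ acts letterwise while $\partial_{z,c}$ only inspects adjacent pairs, $\phi\circ\partial^{1,0}_{z,c}\circ\phi$ can be evaluated one adjacent pair at a time; carrying this out, and using that on $\mathcal{A}_z^{0}$ words begin with $e_1$ or $e_z$ and end with $e_0$ or $e_z$ so that the boundary pairs are of restricted type, one gets $\phi\circ\partial^{1,0}_{z,c}\circ\phi=-\partial^{0,1}_{z,c}$ on $\mathcal{A}_z^{0}$, and the two sign changes cancel to yield the claim. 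The main obstacle will be this per-pair computation: because $\phi$ sends each letter to a \emph{binomial} (e.g. $\phi(e_1)=e_z-e_0$), one must check that ``expand $\phi$, then delete a letter'' agrees with ``delete a letter, then expand $\phi$'', which takes a careful case analysis on the two letters of a pair plus sign bookkeeping — and it is the restriction to $\mathcal{A}_z^{0}$ that keeps the boundary pairs in check.
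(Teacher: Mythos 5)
The paper itself gives no proof of this theorem --- it is quoted verbatim from \cite[Theorem 10]{HS_AlgebraicDiff} --- so your argument has to stand on its own. Your prefix rule for the generalized operators $\partial^{p,q}_{\alpha,\beta}$ is correct, and parts (1) and (3) of your plan are sound. For (1), the gap/involution bookkeeping closes: the bijection between (interleaving with a prescribed adjacent pure-$u$ pair, deletion of one of its letters) and interleavings of the shortened word, together with the sign-reversing, fixed-point-free involution swapping the two letters of an interface gap, does reassemble the sum as claimed. For (3), both auxiliary identities check out: $\tau_z^2=\mathrm{id}$ on generators, $\rho\circ\partial^{0,1}_{z,c}\circ\rho=-\partial^{1,0}_{z,c}$ on all of $\mathcal{A}_z$, and the remaining per-pair verification of $\phi\circ\partial^{1,0}_{z,c}\circ\phi=-\partial^{0,1}_{z,c}$ on $\mathcal{A}_z^{0}$ is a finite check that does go through (and does need the restriction to $\mathcal{A}_z^{0}$ to control the boundary pairs).

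Part (2), however, has a genuine gap: the strengthened statement your induction feeds to itself, namely $\partial^{p,q}_{z,c}(u*v)=u*\partial^{p,q}_{z,c}(v)$ for arbitrary boundary letters $p,q$, is \emph{false}. Take $u=e_1$, $v=e_z$, $c=0$ and boundaries $(p,q)=(z,1)$. Then $e_1*e_z=e_ze_z+e_ze_1-e_ze_0$, and a direct computation gives
\begin{align*}
\partial^{z,1}_{z,0}(e_ze_z)&=0, & \partial^{z,1}_{z,0}(e_ze_1)&=0, & \partial^{z,1}_{z,0}(e_ze_0)&=e_0-e_z,
\end{align*}
so that $\partial^{z,1}_{z,0}(e_1*e_z)=e_z-e_0$, whereas $e_1*\partial^{z,1}_{z,0}(e_z)=e_1*0=0$. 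This is not a peripheral boundary case: applying your prefix rule to $\partial^{0,1}_{z,c}(e_1u'*e_zv')$ produces precisely the inner operator $\partial^{z,1}_{z,c}$ acting on the three inner products, because the merged prefix is $e_{1\cdot z}=e_z$. So the induction hypothesis you invoke at the very first peeling step is the false statement, and the induction cannot close as described. To repair this you would need to first determine the correct formula for $\partial^{p,q}_{z,c}(u*v)$ with general boundaries --- which acquires correction terms depending on $p$ --- and induct on that, or else abandon the boundary-generalization for (2) and argue by a different route (for instance via the series/quasi-shuffle interpretation, or by following the proof in \cite{HS_AlgebraicDiff}).
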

\begin{rem}
It is also possible to prove Proposition \ref{prop:ShStDualIdentity}
from Theorem \ref{thm:Three_applications}. 
\end{rem}

\subsection{Standard relations}

We define $\mathrm{Const}\in\mathrm{Hom}(\mathcal{A}_{z},\,\mathcal{A})$
by 
\[
\mathrm{Const}(e_{a_{1}}\cdots e_{a_{n}}):=\begin{cases}
e_{a_{1}}\cdots e_{a_{n}} & \mbox{ if }a_{i}\in\{0,1\}\ \text{for all }i,\\
0 & \mbox{ if }a_{i}=z\ \text{for some }i.
\end{cases}
\]
Note that $\lim_{z\to\infty}L(w)=L({\rm Const}(w))$ for $w\in\mathcal{A}_{z}^{0}$
since $\lim_{z\to0}L(w')=0$ for $w'\in\mathcal{A}_{z}^{0}\cap\mathcal{A}_{z}e_{z}\mathcal{A}_{z}$.
Note that $\mathrm{Const}(u\shuffle v)={\rm Const}(u)\shuffle{\rm Const}(v)$
for $u,v\in\mathcal{A}_{z}$ and ${\rm Const}(u*v)=u*{\rm Const}(v)$
for $u\in\mathcal{A}$, $v\in\mathcal{A}_{z}$ (for the latter identity,
see Proposition \ref{prop:ConstStuffle}).

We define $\varphi_{\otimes}\in{\rm Hom}(\mathcal{A}_{z}^{0},\,\mathcal{A}^{0}\otimes\mathbb{Z}\left\langle e_{0},e_{z}\right\rangle )$
by
\[
\varphi_{\otimes}(w):=\sum_{\substack{r\in\mathbb{Z}_{\geq0}\\
b_{1},\ldots,b_{r}\in\{0,z\}
}
}\mathrm{Const}\left(\partial_{1,b_{1}}\cdots\partial_{1,b_{r}}w\right)\otimes e_{b_{1}}\cdots e_{b_{r}}.
\]
Then we can show that $\varphi_{\otimes}(\mathcal{A}_{z}^{0})\subset\mathcal{A}^{0}\otimes\left(\mathbb{Z}\left\langle e_{0},e_{z}\right\rangle \cap\mathcal{A}_{z}^{0}\right)$
(see Proposition \ref{prop:phi_tensor_in_good_space}). We define
$\varphi_{\shuffle},\varphi_{*}\in{\rm End}(\mathcal{A}_{z}^{0})$
by composing $\varphi_{\otimes}$ and shuffle or stuffle product,
i.e. 
\begin{align*}
\varphi_{\shuffle}(w) & :=\sum_{\substack{r\in\mathbb{Z}_{\geq0}\\
b_{1},\ldots,b_{r}\in\{0,z\}
}
}\mathrm{Const}\left(\partial_{1,b_{1}}\cdots\partial_{1,b_{r}}w\right)\shuffle e_{b_{1}}\cdots e_{b_{r}}\\
\varphi_{*}(w) & :=\sum_{\substack{r\in\mathbb{Z}_{\geq0}\\
b_{1},\ldots,b_{r}\in\{0,z\}
}
}\mathrm{Const}\left(\partial_{1,b_{1}}\cdots\partial_{1,b_{r}}w\right)*e_{b_{1}}\cdots e_{b_{r}}.
\end{align*}

\begin{rem}
The map $\varphi_{\shuffle}$ is a ring endomorphism of $(\mathcal{A}_{z}^{0},\shuffle)$,
and the map $\varphi_{*}$ is a homomorphism as $(\mathcal{A}^{0},*)$-module
(see Proposition \ref{prop:phi_shuffle_hom} and \ref{prop:phi_stuffle_hom}).\end{rem}
\begin{lem}
\label{lem:idem}We have $\varphi_{\otimes}\circ\varphi_{\shuffle}=\varphi_{\otimes}\circ\varphi_{*}=\varphi_{\otimes}$.\end{lem}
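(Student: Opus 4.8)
The plan is to understand $\varphi_\otimes$ as encoding all the iterated derivatives $\partial_{1,b_1}\cdots\partial_{1,b_r}$ with their "constant terms'' recorded in the first tensor factor and the differentiation word $e_{b_1}\cdots e_{b_r}$ recorded in the second factor, and to exploit the fact that both $\varphi_\shuffle$ and $\varphi_*$ are obtained from $\varphi_\otimes$ by multiplying the two tensor slots together (via $\shuffle$ or $*$), followed again by $\mathrm{Const}$-weighted $\partial_{1,b}$-expansion. So the key is a "semigroup'' or "cocycle'' property: applying the family of operators $\{\partial_{1,b}\}$ and taking $\mathrm{Const}$ is compatible with both products in the precise sense that reading off the expansion of $\varphi_\shuffle(w)$ or $\varphi_*(w)$ again and collapsing via $\varphi_\otimes$ returns the same data as $\varphi_\otimes(w)$.

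Concretely, first I would unwind definitions: for $w\in\mathcal{A}_z^0$ write $\varphi_\otimes(w)=\sum_{\mathbf b} C_{\mathbf b}(w)\otimes e_{\mathbf b}$ where $\mathbf b=(b_1,\dots,b_r)$ ranges over words in $\{0,z\}$, $e_{\mathbf b}=e_{b_1}\cdots e_{b_r}$, and $C_{\mathbf b}(w)=\mathrm{Const}(\partial_{1,b_1}\cdots\partial_{1,b_r}w)\in\mathcal{A}^0$. Then $\varphi_\shuffle(w)=\sum_{\mathbf b}C_{\mathbf b}(w)\shuffle e_{\mathbf b}$ and similarly with $*$. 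Applying $\varphi_\otimes$ again I must compute $C_{\mathbf c}(\varphi_\shuffle(w))$ for each word $\mathbf c$ in $\{0,z\}$, i.e. $\mathrm{Const}\!\big(\partial_{1,c_1}\cdots\partial_{1,c_s}\sum_{\mathbf b}C_{\mathbf b}(w)\shuffle e_{\mathbf b}\big)$, and show the total equals $C_{\mathbf c}(w)\otimes e_{\mathbf c}$ after summing over $\mathbf b$ — more precisely that $\sum_{\mathbf c}C_{\mathbf c}(\varphi_\shuffle(w))\otimes e_{\mathbf c}=\sum_{\mathbf b}C_{\mathbf b}(w)\otimes e_{\mathbf b}$.

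The two main tools are: (i) Theorem \ref{thm:Three_applications}(1), giving $\partial_{z,c}(u\shuffle v)=(\partial_{z,c}u)\shuffle v+u\shuffle(\partial_{z,c}v)$ and its analogue (2) for $*$, which I will need for $\partial_{1,b}$ as well — here I expect to use the stated relation $\partial_{z,0}+\partial_{z,1}+\partial_{1,0}=0$ on $\mathcal{A}_z^0$ (Proposition \ref{prop:deriv_sum_z0_z1_10}) to convert $\partial_{1,b}$-Leibniz behaviour into $\partial_{z,\bullet}$-Leibniz behaviour, modulo terms that $\mathrm{Const}$ kills; and (ii) the compatibility of $\mathrm{Const}$ with the products, $\mathrm{Const}(u\shuffle v)=\mathrm{Const}(u)\shuffle\mathrm{Const}(v)$ and $\mathrm{Const}(u*v)=u*\mathrm{Const}(v)$, which lets me push $\mathrm{Const}$ through once the differentiations have been distributed. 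The crucial observation making the sum telescope is that $\partial_{1,b}$ acting on a letter $e_{b}$ at the boundary (recall $a_{n+1}=1$ in the definition of $\partial_{\alpha,\beta}$) produces exactly the "bookkeeping'' term that matches peeling off the last letter of $e_{\mathbf b}$, while $\partial_{1,b}$ acting on a $C_{\mathbf b'}(w)\in\mathcal{A}^0$ — a word in $e_0,e_1$ only, bracketed as $e_1\cdots e_0$ — together with $\mathrm{Const}$ reconstructs $C$-coefficients with longer words; after the dust settles all contributions except $\mathbf c=\mathbf b$, coefficient $C_{\mathbf b}(w)$, cancel in pairs.

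The hard part will be controlling the interaction between $\partial_{1,b}$ and the trailing letters of $e_{\mathbf b}=e_{b_1}\cdots e_{b_r}$: since $\partial_{1,b}$ is defined using adjacency of letters including the virtual boundary letters $a_0=0$, $a_{n+1}=1$, applying it to a product $C_{\mathbf b}(w)\shuffle e_{\mathbf b}$ mixes the boundary of the $C$-block with the head of the $e_{\mathbf b}$-block and the tail of $e_{\mathbf b}$ with the outer boundary $1$, and one must verify that precisely the right cross-terms survive $\mathrm{Const}$ and organize into a telescoping cancellation over all word lengths simultaneously. I would handle this by an induction on the length of $\mathbf c$ (or equivalently on $\deg$ in the $e_z$-direction), peeling one $\partial_{1,c_s}$ at a time, using (1)/(2) of Theorem \ref{thm:Three_applications} plus the $\partial$-sum relation to split each peeled derivative into an "acts on $C$-part'' piece and an "acts on $e_{\mathbf b}$-part'' piece, and checking the base case $r=0$ (where $\varphi_\otimes$ just extracts the $\mathrm{Const}$ of $w$, for which $\mathrm{Const}(\varphi_\shuffle(w)) = \mathrm{Const}(w)$ is immediate since the $r\geq1$ terms of $\varphi_\shuffle(w)$ all lie in $\mathcal{A}_z^0\cap\mathcal{A}_z e_z\mathcal{A}_z$ and so are killed by $\mathrm{Const}$). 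The argument for $\varphi_*$ is parallel, using part (2) of Theorem \ref{thm:Three_applications} and $\mathrm{Const}(u*v)=u*\mathrm{Const}(v)$, with the mild asymmetry that the stuffle acts only on the left by elements of $\mathcal{A}$; since each $C_{\mathbf b}(w)\in\mathcal{A}^0\subset\mathcal{A}$ this causes no trouble.
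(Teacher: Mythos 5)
Your overall strategy is the paper's: expand $\varphi_{\otimes}(\varphi_{\shuffle}(w))$ and $\varphi_{\otimes}(\varphi_{*}(w))$ via the Leibniz/module formulas of Theorem \ref{thm:Three_applications} and the multiplicativity of $\mathrm{Const}$, and use the fact that $\partial_{1,b}$ acts on a word $e_{b_{1}}\cdots e_{b_{r}}$ with $b_{i}\in\{0,z\}$ by peeling the last letter, $\partial_{1,b}(e_{b_{1}}\cdots e_{b_{r}})=\delta_{b,b_{r}}e_{b_{1}}\cdots e_{b_{r-1}}$. These are exactly the two ingredients the paper's one-line proof cites, and your identification of the peeling formula as the crucial observation is on target.

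However, the step you defer as ``the hard part'' --- the cross-terms in which some of the $\partial_{1,c_{i}}$ land on the left factor $\mathrm{Const}(\partial_{1,b_{1}}\cdots\partial_{1,b_{r}}w)\in\mathcal{A}^{0}$ --- is not handled by a ``telescoping cancellation in pairs'' across different words $\mathbf{b}$, and the mechanism you assert is not what actually happens. These cross-terms vanish individually: for $C\in\mathcal{A}^{0}\subset\mathbb{Z}\left\langle e_{0},e_{1}\right\rangle $ one has $\partial_{1,z}C=0$ trivially (no letter, boundary letters included, equals $z$), and $\partial_{1,0}C=-\partial_{z,0}C-\partial_{z,1}C=0$ by Proposition \ref{prop:deriv_sum_z0_z1_10}; equivalently one can apply $\mathrm{Const}\circ\partial_{1,0}=0$ on $\mathcal{A}_{z}^{0}$ (Proposition \ref{prop:phi_tensor_in_good_space}) to $\partial_{1,0}\mathrm{Const}(\partial_{1,b_{1}}\cdots\partial_{1,b_{r}}w)=\mathrm{Const}(\partial_{1,0}\partial_{1,b_{1}}\cdots\partial_{1,b_{r}}w)$. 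Consequently the shuffle Leibniz expansion collapses to $\partial_{1,c_{1}}\cdots\partial_{1,c_{s}}(C\shuffle e_{b_{1}}\cdots e_{b_{r}})=C\shuffle\partial_{1,c_{1}}\cdots\partial_{1,c_{s}}(e_{b_{1}}\cdots e_{b_{r}})$ with no cross-terms at all, while in the stuffle case Theorem \ref{thm:Three_applications}(2) already pushes the whole derivative onto the right factor. The only remaining point --- also left implicit in your sketch --- is that a partially peeled $e_{b_{1}}\cdots e_{b_{j}}$ ($0<j<r$) is killed by $\mathrm{Const}$, which holds because $b_{1}=z$ whenever the corresponding coefficient is nonzero (Proposition \ref{prop:phi_tensor_in_good_space} again). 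With the vanishing of the cross-terms supplied your plan goes through and coincides with the paper's argument; without it, the cancellation you describe does not exist as such.
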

\begin{proof}
The claim follows from (1) and (2) of Theorem \ref{thm:Three_applications}
since $\partial_{1,b}(e_{b_{1}}\cdots e_{b_{r}})=\delta_{b,b_{r}}e_{b_{1}}\cdots e_{b_{r-1}}$
for $b,b_{1},\dots,b_{r}\in\{0,z\}$.\end{proof}
\begin{prop}
\label{prop:ISD_equiv}The following six subspaces of $\mathcal{A}_{z}^{0}$
are equal.
\begin{enumerate}
\item The image of $({\rm id}-\varphi_{\shuffle}):\mathcal{A}_{z}^{0}\to\mathcal{A}_{z}^{0}$.
\item The kernel of $\varphi_{\shuffle}:\mathcal{A}_{z}^{0}\to\mathcal{A}_{z}^{0}$.
\item The image of $({\rm id}-\varphi_{*}):\mathcal{A}_{z}^{0}\to\mathcal{A}_{z}^{0}$.
\item The kernel of $\varphi_{*}:\mathcal{A}_{z}^{0}\to\mathcal{A}_{z}^{0}$.
\item The kernel of $\varphi_{\otimes}:\mathcal{A}_{z}^{0}\to\mathcal{A}^{0}\otimes\mathbb{Z}\left\langle e_{0},e_{z}\right\rangle $.
\item The set $\{w\in\mathcal{A}_{z}^{0}\mid{\rm Const}(\partial_{z,\alpha_{1}}\cdots\partial_{z,\alpha_{r}}w)=0\ {\rm for}\ r\geq0,\alpha_{1},\dots,\alpha_{r}\in\{0,1\}\}$.
\end{enumerate}
\end{prop}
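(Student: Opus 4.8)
The plan is to deduce everything from two inputs: the purely definitional factorizations $\varphi_{\shuffle}=m_{\shuffle}\circ\varphi_{\otimes}$ and $\varphi_{*}=m_{*}\circ\varphi_{\otimes}$, where $m_{\shuffle},m_{*}\colon\mathcal{A}^{0}\otimes\left(\mathbb{Z}\langle e_{0},e_{z}\rangle\cap\mathcal{A}_{z}^{0}\right)\to\mathcal{A}_{z}^{0}$ are the multiplication maps $u\otimes v\mapsto u\shuffle v$ and $u\otimes v\mapsto u*v$; and Lemma \ref{lem:idem}, i.e.\ $\varphi_{\otimes}\circ\varphi_{\shuffle}=\varphi_{\otimes}\circ\varphi_{*}=\varphi_{\otimes}$. (The maps $m_{\shuffle},m_{*}$ take values in $\mathcal{A}_{z}^{0}$ because $\mathcal{A}_{z}^{0}$ is a commutative ring under $\shuffle$ and an $(\mathcal{A}^{0},*)$-module, and because $\varphi_{\otimes}(\mathcal{A}_{z}^{0})\subseteq\mathcal{A}^{0}\otimes\left(\mathbb{Z}\langle e_{0},e_{z}\rangle\cap\mathcal{A}_{z}^{0}\right)$.) With these in hand I would first establish the cycle of inclusions $(1)\subseteq(5)\subseteq(2)\subseteq(1)$, and likewise $(3)\subseteq(5)\subseteq(4)\subseteq(3)$, and then $(5)=(6)$ separately.

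The cycle is formal. For $(1)\subseteq(5)$: if $w=(\mathrm{id}-\varphi_{\shuffle})(v)$ then $\varphi_{\otimes}(w)=\varphi_{\otimes}(v)-\varphi_{\otimes}(\varphi_{\shuffle}(v))=0$ by Lemma \ref{lem:idem}; the same works with $\varphi_{*}$ for $(3)\subseteq(5)$. For $(5)\subseteq(2)$: if $\varphi_{\otimes}(w)=0$ then $\varphi_{\shuffle}(w)=m_{\shuffle}(\varphi_{\otimes}(w))=0$, and similarly $\varphi_{*}(w)=0$ gives $(5)\subseteq(4)$. For $(2)\subseteq(1)$: if $\varphi_{\shuffle}(w)=0$ then $w=(\mathrm{id}-\varphi_{\shuffle})(w)\in\mathrm{Im}(\mathrm{id}-\varphi_{\shuffle})$, and likewise $(4)\subseteq(3)$. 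Hence $(1)=(2)=(5)=(3)=(4)$. (This also shows $\varphi_{\shuffle},\varphi_{*}$ are idempotents with $\ker\varphi_{\shuffle}=\ker\varphi_{*}=\ker\varphi_{\otimes}$, but the inclusion cycle avoids having to state that.)

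For $(5)=(6)$ I would argue as follows. Since the words $e_{b_{1}}\cdots e_{b_{r}}$ with $r\geq0$, $b_{i}\in\{0,z\}$, form a $\mathbb{Z}$-basis of $\mathbb{Z}\langle e_{0},e_{z}\rangle$, the tensor $\varphi_{\otimes}(w)$ vanishes exactly when $\mathrm{Const}(\partial_{1,b_{1}}\cdots\partial_{1,b_{r}}w)=0$ for every such sequence; by $\mathbb{Z}$-linearity of $\mathrm{Const}$ this is the same as $\mathrm{Const}(Tw)=0$ for every $T$ in the $\mathbb{Z}$-submodule $R\subseteq\mathrm{End}(\mathcal{A}_{z}^{0})$ spanned by the monomials in $\partial_{1,0}$ and $\partial_{1,z}$. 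In the same way, $w$ lies in $(6)$ iff $\mathrm{Const}(Tw)=0$ for all $T$ in the submodule $R'$ spanned by the monomials in $\partial_{z,0}$ and $\partial_{z,1}$. Both $R$ and $R'$ are unital subrings of $\mathrm{End}(\mathcal{A}_{z}^{0})$, so it suffices to prove $R=R'$, and this comes from two elementary facts: $\partial_{z,1}=\partial_{1,z}$, since $\partial_{\alpha,\beta}$ depends only on the unordered pair $\{\alpha,\beta\}$; and $\partial_{z,0}=-\partial_{z,1}-\partial_{1,0}$ on $\mathcal{A}_{z}^{0}$, which is Proposition \ref{prop:deriv_sum_z0_z1_10}. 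Together with the rearrangement $\partial_{1,0}=-\partial_{z,0}-\partial_{z,1}$, these express each generator of $R'$ through those of $R$ and conversely, so $R=R'$.

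Essentially everything is bookkeeping on top of the cited results. The only non-formal ingredient is Proposition \ref{prop:deriv_sum_z0_z1_10}, which is exactly what makes descriptions $(5)$ and $(6)$ agree; apart from that, the single point that warrants a careful sentence is the well-definedness of $m_{\shuffle}$ and $m_{*}$ into $\mathcal{A}_{z}^{0}$ — that shuffling (resp.\ stuffling) an element of $\mathcal{A}^{0}$ with a word in $e_{0},e_{z}$ lying in $\mathcal{A}_{z}^{0}$ stays in $\mathcal{A}_{z}^{0}$ — which is precisely the ring- and module-structure on $\mathcal{A}_{z}^{0}$ recorded just after the definitions of $\shuffle$ and $*$.
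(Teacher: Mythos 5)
Your proof is correct and follows essentially the same route as the paper: the cycle of inclusions $(1)\subseteq(5)\subseteq(2)\subseteq(1)$ (and its stuffle analogue) via Lemma \ref{lem:idem} and the factorization of $\varphi_{\shuffle},\varphi_{*}$ through $\varphi_{\otimes}$, together with $(5)=(6)$ from $\partial_{1,0}+\partial_{z,0}+\partial_{z,1}=0$. You merely spell out the $(5)=(6)$ step (via the equality of the operator rings generated by $\{\partial_{1,0},\partial_{1,z}\}$ and $\{\partial_{z,0},\partial_{z,1}\}$) in more detail than the paper does.
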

\begin{proof}
The equality of (5) and (6) follows from $\partial_{1,0}+\partial_{z,0}+\partial_{z,1}=0$.
Since $\varphi_{\shuffle}$ and $\varphi_{*}$ factor through $\varphi_{\otimes}$,
we have $\ker(\varphi_{\otimes})\subset\ker(\varphi_{\shuffle})$
and $\ker(\varphi_{\otimes})\subset\ker(\varphi_{*})$. Since $({\rm id}-\varphi_{\shuffle})(w)=w$
for $w\in\ker(\varphi_{\shuffle})$, we have $\ker(\varphi_{\shuffle})\subset{\rm Im}({\rm id}-\varphi_{\shuffle})$.
Similarly, since $({\rm id}-\varphi_{*})(w)=w$ for $w\in\ker(\varphi_{*})$,
we have $\ker(\varphi_{*})\subset{\rm Im}({\rm id}-\varphi_{*})$.
From Lemma \ref{lem:idem}, we have ${\rm Im}({\rm id}-\varphi_{\shuffle})\subset\ker(\varphi_{\otimes})$
and ${\rm Im}({\rm id}-\varphi_{*})\subset\ker(\varphi_{\otimes})$.
Thus all these subspaces are equal.\end{proof}
\begin{defn}
\label{def:ISD}We define the set of \emph{standard relations} $\mathcal{I}_{{\rm ST}}$
as the subspace of $\mathcal{A}_{z}^{0}$ given in Proposition \ref{prop:ISD_equiv}.\end{defn}
\begin{rem}
$\mathcal{I}_{{\rm SD}}$ forms an ideal of $(\mathcal{A}_{z}^{0},\shuffle)$
since $\mathcal{I}_{{\rm SD}}=\ker(\varphi_{\shuffle})$ and $\varphi_{\shuffle}$
is a ring endomorphism of $(\mathcal{A}_{z}^{0},\shuffle)$. By the
similar reasoning, $\mathcal{I}_{{\rm SD}}$ is also $(\mathcal{A}^{0},*)$-module.
\end{rem}
The next theorem states that ``standard relations'' are in fact relations
of iterated integrals.
\begin{thm}
\label{thm:ISD_is_relation}For $w\in\mathcal{I}_{{\rm ST}}$, $L(w)=0$. \end{thm}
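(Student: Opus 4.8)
The plan is to exploit the characterization of $\mathcal{I}_{{\rm ST}}$ as the kernel of $\varphi_\otimes$, equivalently (by Proposition \ref{prop:ISD_equiv}(6)) the set of $w\in\mathcal{A}_z^0$ with ${\rm Const}(\partial_{z,\alpha_1}\cdots\partial_{z,\alpha_r}w)=0$ for all $r\geq 0$ and all $\alpha_1,\dots,\alpha_r\in\{0,1\}$. The strategy is to regard $L(w)$ as a holomorphic function of $z$ on $\mathbb{C}\setminus[0,1]$ and show it is identically zero by (i) showing all its $z$-derivatives of every order vanish in the limit $z\to\infty$, and (ii) invoking that a holomorphic function on a connected domain whose germ at $\infty$ is zero must vanish everywhere. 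For the germ-at-infinity step I would use the two facts recorded just before the definition of $\varphi_\otimes$: that $\lim_{z\to\infty}L(w)=L({\rm Const}(w))$ for $w\in\mathcal{A}_z^0$, and that $L$ is holomorphic there, so that $L(w)$ extends holomorphically to a neighborhood of $\infty$ on $\mathbb{P}^1$.

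The key computation is to iterate Proposition \ref{prop:differential formula}. Applying $d/dz$ repeatedly gives
\[
\frac{d^r}{dz^r}L(w)=\sum_{\alpha_1,\dots,\alpha_r\in\{0,1\}} c_{\alpha_1,\dots,\alpha_r}(z)\, L\bigl(\partial_{z,\alpha_1}\cdots\partial_{z,\alpha_r}w\bigr)+\cdots,
\]
where the coefficients $c_{\alpha_1,\dots,\alpha_r}(z)$ are rational functions in $z$ built from $\frac{1}{z-0}$ and $\frac{1}{z-1}$ and their derivatives (plus lower-order derivative terms from differentiating those coefficients, which themselves involve $L(\partial_{z,\beta_1}\cdots\partial_{z,\beta_s}w)$ with $s<r$). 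More cleanly, I would argue by induction on $r$: assuming $w\in\mathcal{I}_{{\rm ST}}$, every word $\partial_{z,\alpha_1}\cdots\partial_{z,\alpha_s}w$ (for any $s\geq 0$) again lies in $\mathcal{I}_{{\rm ST}}$ — this is immediate from characterization (6), since prepending more $\partial_{z,\alpha}$'s only gives a subset of the conditions — and each such element $w'$ satisfies $\lim_{z\to\infty}L(w')=L({\rm Const}(w'))=0$ because ${\rm Const}(w')=0$ by (6) with $r=0$. Hence, by Proposition \ref{prop:differential formula}, $\frac{d}{dz}L(w)$ is a $\mathbb{C}(z)$-linear (indeed, with poles only at $0,1$) combination of functions $L(w')$ with $w'\in\mathcal{I}_{{\rm ST}}$, all vanishing at $\infty$, so $\frac{d}{dz}L(w)\to 0$ as $z\to\infty$; and $L(w)\to 0$ as $z\to\infty$ as well.

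Now I would finish as follows. Consider $F(z):=L(w)$ on $\mathbb{C}\setminus[0,1]$. Since $F$ is holomorphic and $F(z)\to 0$ as $z\to\infty$, $F$ extends to a holomorphic function on a punctured neighborhood of $\infty$ with removable singularity, taking value $0$ at $\infty$; write its Taylor expansion there in the variable $1/z$. From the argument above, $F'(z)\to 0$ as $z\to\infty$ too; iterating, $F^{(r)}(z)\to 0$ for every $r$ — one needs that $\partial_{z,\alpha_1}\cdots\partial_{z,\alpha_r}w\in\mathcal{I}_{\rm ST}$ and the rational coefficients arising are regular at $\infty$, so all boundary terms vanish. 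This forces every coefficient of the expansion of $F$ at $\infty$ to be zero, hence $F\equiv 0$ on the connected domain $\mathbb{C}\setminus[0,1]$ by the identity theorem. In particular $L(w)=0$ (as a number, evaluating at any fixed $z\notin[0,1]$, or taking any convenient limit); and since $\mathcal{I}_{\rm ST}\subset\mathcal{A}_z^0$, the claim $L(w)=0$ is exactly the assertion.

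The main obstacle I anticipate is making the ``all $z$-derivatives vanish at $\infty$'' step fully rigorous: one must track that differentiating the rational coefficients $\frac{1}{z-a}$ in Proposition \ref{prop:differential formula} produces only coefficients regular (in fact vanishing) at $z=\infty$, and that the full expansion of $F^{(r)}$ is a finite $\mathbb{C}[\frac{1}{z-0},\frac{1}{z-1}]$-combination of terms $L(w')$ with $w'=\partial_{z,\alpha_1}\cdots\partial_{z,\alpha_s}w\in\mathcal{I}_{\rm ST}$, so that each term individually tends to $0$. Care is also needed because a priori $\mathcal{I}_{\rm ST}$ was only shown closed under prepending $\partial_{z,\alpha}$ via characterization (6); this is the clean way to see the recursion stays inside $\mathcal{I}_{\rm ST}$, and I would state it as a preliminary remark. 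Everything else — holomorphy of $L(w)$ in $z$, the limit formula at $\infty$, the identity theorem — is either quoted from the excerpt or standard complex analysis.
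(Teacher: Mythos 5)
Your setup is fine --- the characterization of $\mathcal{I}_{\rm ST}$ via condition (6), the closure of $\mathcal{I}_{\rm ST}$ under $\partial_{z,\alpha}$, and the fact that $\lim_{z\to\infty}L(w')=L(\mathrm{Const}(w'))=0$ for every $w'\in\mathcal{I}_{\rm ST}$ are all correct and are exactly the ingredients the paper uses. But the final step is a non sequitur: knowing that $F^{(r)}(z)\to 0$ as $z\to\infty$ for every $r\geq 0$ does \emph{not} force the expansion of $F$ at $\infty$ to vanish. In the local coordinate $u=1/z$ one has $\frac{d}{dz}=-u^{2}\frac{d}{du}$, so for \emph{any} function holomorphic at $\infty$ the derivatives $F^{(r)}(z)$ with $r\geq 1$ automatically tend to $0$ there; the condition carries information only for $r=0$, namely that the constant term of the expansion vanishes. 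The function $F(z)=1/z$ satisfies $F^{(r)}(z)\to 0$ for all $r$ and is not identically zero. So your argument establishes only $a_{0}=0$, not $F\equiv 0$, and the identity-theorem conclusion does not follow. (To extract the higher Taylor coefficients at $\infty$ you would need decay rates like $L(\partial_{z,c}w)=o(z^{-1})$, which is not among the facts you have.)

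The repair is the paper's argument, which replaces ``vanishing at $\infty$'' of the constituents by \emph{identical} vanishing, obtained by induction on the degree of $w$ rather than on the order of differentiation. Since $\partial_{z,c}$ strictly lowers degree and maps $\mathcal{I}_{\rm ST}$ into itself, the induction hypothesis gives $L(\partial_{z,0}w)=L(\partial_{z,1}w)=0$ as functions of $z$; Proposition \ref{prop:differential formula} then yields $\frac{d}{dz}L(w)=\frac{1}{z}L(\partial_{z,0}w)+\frac{1}{z-1}L(\partial_{z,1}w)\equiv 0$, so $L(w)$ is constant on the connected domain $\mathbb{C}\setminus[0,1]$, and the constant equals $\lim_{z\to\infty}L(w)=L(\mathrm{Const}(w))=0$. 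This uses the limit at $\infty$ only once, at the top level, and avoids any analysis of the germ of $F$ at $\infty$ beyond its value there. I suggest you restructure your proof along these lines; everything you have already proved slots directly into that induction.
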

\begin{proof}
Since $\mathcal{I}_{{\rm ST}}=\{w\in\mathcal{A}_{z}^{0}\mid{\rm Const}(\partial_{z,\alpha_{1}}\cdots\partial_{z,\alpha_{r}}w)=0\ {\rm for}\ r\geq0,\alpha_{1},\dots,\alpha_{r}\in\{0,1\}\}$,
we have 
\[
v\in\mathcal{I}_{{\rm ST}}\Leftrightarrow\left({\rm Const}(v)=0,\ \partial_{z,0}(v)\in\mathcal{I}_{{\rm ST}}\ {\rm and}\ \partial_{z,1}(v)\in\mathcal{I}_{{\rm ST}}\right)
\]
for $v\in\mathcal{A}_{z}^{0}$. We prove the theorem by induction
on the degree of $w$. Let $w\in\mathcal{I}_{{\rm ST}}$. Since ${\rm Const}(w)=0$,
we have
\[
\lim_{z\to\infty}L(w)=0.
\]
From the induction hypothesis, $L(\partial_{z,0}w)=L(\partial_{z,1}w)=0$.
Thus
\begin{align*}
\frac{d}{dz}L(w) & =\frac{1}{z}L(\partial_{z,0}w)+\frac{1}{z-1}L(\partial_{z,1}w)=0.
\end{align*}
Thus the claim $L(w)=0$ is proved. 
\end{proof}

\section{\label{sec:Confluence-relations}Confluence relations}

From now on, we consider the limit $z\to1_{+0}$ of the standard relations.
We define the subspaces $\mathcal{A}_{z}^{-2}\subset\mathcal{A}_{z}^{-1}\subset\mathcal{A}_{z}^{0}$
by
\[
\mathcal{A}_{z}^{-2}:=\mathbb{Z}\oplus e_{1}\mathcal{A}_{z}e_{0}\oplus e_{z}\mathcal{A}_{z}e_{0}
\]
and
\[
\mathcal{A}_{z}^{-1}:=\mathcal{A}_{z}^{-2}\mathbb{Z}\left\langle e_{z}\right\rangle .
\]
We define an endomorphism $|_{a\rightarrow b}:\mathcal{A}_{z}\to\mathcal{A}_{z}$
($a,b\in\{0,1,z\}$) by 
\[
e_{x}|_{a\rightarrow b}:=\begin{cases}
e_{b} & x=a\\
e_{x} & x\neq a.
\end{cases}
\]
For $w\in\mathcal{A}_{z}^{-2}$, $\lim_{z\to1}L(w)$ exists and equals
to $L(\left.w\right|_{z\to1})$. For general $w\in\mathcal{A}_{z}^{0}$,
$\lim_{z\to1}L(w)$ does not exist, but the following lemma holds.
\begin{lem}
\label{lem:Pw}For $w\in\mathcal{A}_{z}^{0}$, there exists a natural
number $m$ and a unique polynomial $P_{w}(T)\in\mathbb{R}[T]$ such
that
\[
L(w)-P_{w}(\log(z-1))=O((z-1)\log^{m}(z-1)).
\]
\end{lem}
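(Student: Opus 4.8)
The plan is to analyze the behavior of $L(w)$ near $z=1$ by peeling off the singularities using the differential equation from Proposition \ref{prop:differential formula}, combined with an induction on the degree of $w$. First I would set up the induction: for $w$ of degree $0$ the claim is trivial since $L(w)$ is constant. For the inductive step, I would use $\frac{d}{dz}L(w)=\frac{1}{z}L(\partial_{z,0}w)+\frac{1}{z-1}L(\partial_{z,1}w)$, where $\partial_{z,0}w$ and $\partial_{z,1}w$ both lie in $\mathcal{A}_z^0$ and have strictly smaller degree. By the induction hypothesis, there exist polynomials $P_{\partial_{z,0}w}$ and $P_{\partial_{z,1}w}$ and an $m'$ with $L(\partial_{z,a}w)=P_{\partial_{z,a}w}(\log(z-1))+O((z-1)\log^{m'}(z-1))$ for $a\in\{0,1\}$.

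Next I would integrate. Write $g(z):=L(w)$, so $g'(z)=\frac{1}{z}L(\partial_{z,0}w)+\frac{1}{z-1}L(\partial_{z,1}w)$. Substitute the asymptotic expansions of $L(\partial_{z,a}w)$ into the right-hand side. Near $z=1$, the term $\frac{1}{z}L(\partial_{z,0}w)$ is bounded (in fact $O(\log^{m'}(z-1))$ after expanding $\frac1z$ in powers of $z-1$), and integrating it contributes something of size $O((z-1)\log^{m'}(z-1))$ plus a constant. The dominant contribution comes from $\frac{1}{z-1}L(\partial_{z,1}w)=\frac{1}{z-1}P_{\partial_{z,1}w}(\log(z-1))+O(\log^{m'}(z-1))$. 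Using the elementary antiderivative $\int \frac{\log^k(z-1)}{z-1}\,dz=\frac{1}{k+1}\log^{k+1}(z-1)$, integrating $\frac{1}{z-1}P_{\partial_{z,1}w}(\log(z-1))$ term by term produces a new polynomial $Q(\log(z-1))$ of degree one higher than $\deg P_{\partial_{z,1}w}$. Collecting everything, $g(z)=Q(\log(z-1))+C+O((z-1)\log^{m}(z-1))$ for a suitable constant $C$ and a suitable $m$; setting $P_w(T):=Q(T)+C$ gives the desired polynomial. The constant $C$ is pinned down precisely because the error term $O((z-1)\log^m(z-1))$ vanishes as $z\to1$, so $C=\lim_{z\to1}(g(z)-Q(\log(z-1)))$, which establishes both existence and uniqueness of $P_w$.

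The main obstacle I expect is bookkeeping the error terms carefully through the integration: one must check that integrating an $O((z-1)\log^{m'}(z-1))$ term against $\frac{1}{z}$ or $\frac{1}{z-1}$ again yields an error of the stated shape $O((z-1)\log^{m}(z-1))$ (with possibly larger $m$), and that the power-series expansion of $\frac{1}{z}=\sum_{n\ge0}(-1)^n(z-1)^n$ around $z=1$ interacts cleanly with the $\log$ powers. This is where I would need a small auxiliary lemma: for any $k\ge0$, $\int_{?}^{z}(t-1)^j\log^k(t-1)\,dt = (z-1)^{j+1}\cdot(\text{polynomial in }\log(z-1)\text{ of degree }k) + \text{const}$ when $j\ge0$, and the analogous clean formula when $j=-1$. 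Granting this, the argument is a routine, if slightly fiddly, induction. A secondary point worth noting is that one should work on a punctured neighborhood of $1$ within $\mathbb{C}\setminus[0,1]$ (say $z\to1_{+0}$ along the real axis, as indicated in the text) so that $\log(z-1)$ and the antiderivatives are single-valued and the asymptotic relations make sense; the branch of the logarithm is fixed throughout and the base point of integration can be taken in this neighborhood.
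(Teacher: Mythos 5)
Your proof is correct, but it takes a genuinely different route from the paper. The paper's proof is algebraic-then-citational: it uses Theorem \ref{thm:ISD_is_relation} to replace $L(w)$ by $L(\varphi_{\shuffle}(w))$, which is a $\mathbb{Z}$-linear combination of products $L(u)L(v)$ with $u\in\mathcal{A}^{0}$ (so $L(u)$ is a constant) and $v\in\mathcal{A}_{z}^{0}\cap\mathbb{Z}\left\langle e_{0},e_{z}\right\rangle$; for such $v$ one has $L(v)=(-1)^{d}{\rm Li}_{k_{1},\dots,k_{d}}(z^{-1})$, and the lemma is then reduced to the well-known asymptotic expansion of multiple polylogarithms at $1$, quoted from the literature. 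You instead prove the asymptotic expansion directly for all of $\mathcal{A}_{z}^{0}$ by induction on degree, integrating the differential equation of Proposition \ref{prop:differential formula} and tracking the error terms through the antiderivatives $\int(t-1)^{j}\log^{k}(t-1)\,dt$. Your argument is more self-contained --- it needs neither Theorem \ref{thm:ISD_is_relation} nor the external fact about ${\rm Li}_{k_{1},\dots,k_{d}}$, and it is essentially the standard proof of that external fact, inlined and generalized --- at the cost of the analytic bookkeeping you flag (in particular, bounding $\int_{1}^{z}O(\log^{m'}(t-1))\,dt$ requires integrating the logarithm rather than a crude sup bound, since $|\log(t-1)|$ blows up on the interval of integration; the explicit antiderivative handles this). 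One small point you leave implicit is the uniqueness of $P_{w}$ as a polynomial: it follows since a nonzero polynomial in $\log(z-1)$ cannot be $O((z-1)\log^{m}(z-1))$ as $z\to1$. With that remark added, your proof is complete.
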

\begin{proof}
The uniqueness is obvious. By Theorem \ref{thm:ISD_is_relation},
$L(w)=L(\varphi_{\shuffle}(w))$ which is a sum of $L(u)L(v)$ with
$u\in\mathcal{A}^{0}$ and $v\in\mathcal{A}_{z}^{0}\cap\mathbb{Z}\left\langle e_{0},e_{z}\right\rangle $,
and thus the proof of the lemma can be reduced to the case $w\in\mathcal{A}_{z}^{0}\cap\mathbb{Z}\left\langle e_{0},e_{z}\right\rangle $
. Put $w=e_{z}e_{0}^{k_{1}-1}\cdots e_{z}e_{0}^{k_{d}-1}$. Then we
have
\[
L(w)=(-1)^{d}{\rm Li}_{k_{1},\dots,k_{d}}(z^{-1})
\]
where ${\rm Li}_{k_{1},\dots,k_{d}}$ is the multiple polylogarithm.
Thus, the lemma follows from the following well-known fact (c.f. \cite[p311]{IKZ})
that there exists $m\in\mathbb{Z}_{>0}$ and $Q\in\mathbb{R}[T]$
such that 
\[
{\rm Li}_{k_{1},\dots,k_{d}}(t)=Q(\log(1-t))+O((1-t)\log^{m}(1-t)).
\]

\end{proof}
Thus, for general $w\in\mathcal{A}_{z}^{0}$, we consider $P_{w}(0)$
instead of $\lim_{z\to1}L(w)$. We define $\lambda':\mathcal{A}_{z}^{-2}\to\mathcal{A}^{0}$
by
\[
\lambda'(w)=\left.w\right|_{z\to1}.
\]
Then obviously $P_{w}(0)=L(\lambda'(w))$ for $w\in\mathcal{A}_{z}^{-2}$.
Now we construct a extension $\lambda:\mathcal{A}_{z}^{0}\to\mathcal{A}^{0}$
of $\lambda'$ such that $P_{w}(0)=L(\lambda(w))$ for $w\in\mathcal{A}_{z}^{0}$.

Note that there exists an isomorphism
\[
\mathcal{A}_{z}^{-2}\otimes(\mathcal{A}_{z}^{0}\cap\mathbb{Z}\left\langle e_{1},e_{z}\right\rangle )\simeq\mathcal{A}_{z}^{0}\ ;\ u\otimes v\mapsto u\shuffle v
\]
(see Proposition \ref{prop:bijectivity_f} for bijectivity). We denote
the inverse of this isomorphism by ${\rm reg}_{\{z,1\}}$.
\begin{defn}
We define a homomorphism $N:\mathcal{A}_{z}^{0}\to\mathcal{A}_{z}^{-1}$
by the composition
\[
\mathcal{A}_{z}^{0}\xrightarrow[{\rm reg}_{\{z,1\}}]{\simeq}\mathcal{A}_{z}^{-2}\otimes\left(\mathcal{A}_{z}^{0}\cap\mathbb{Z}\left\langle e_{1},e_{z}\right\rangle \right)\xrightarrow[{\rm id}\otimes f]{\simeq}\mathcal{A}_{z}^{-2}\otimes\left(\mathcal{A}_{z}^{0}\cap\mathbb{Z}\left\langle e_{0},e_{z}\right\rangle \right)\xrightarrow[u\otimes v\mapsto u\shuffle v]{\twoheadrightarrow}\mathcal{A}_{z}^{-1}
\]
where 
\[
f:=\tau_{z}\mid_{\mathcal{A}_{z}^{0}\cap\mathbb{Z}\left\langle e_{1},e_{z}\right\rangle }\ \ (=\varphi_{\shuffle}\mid_{\mathcal{A}_{z}^{0}\cap\mathbb{Z}\left\langle e_{1},e_{z}\right\rangle }=\varphi_{*}\mid_{\mathcal{A}_{z}^{0}\cap\mathbb{Z}\left\langle e_{1},e_{z}\right\rangle }).
\]

\end{defn}
From the definition, we have $L(N(w))=L(w)$. Note that there exists
an isomorphism
\[
\mathcal{A}_{z}^{-2}\otimes\mathbb{Z}\left\langle e_{z}\right\rangle \simeq\mathcal{A}_{z}^{-1}\ ;\ u\otimes v\mapsto u\shuffle v.
\]
We denote the inverse of this isomorphism by ${\rm reg}_{\{z\}}$.
\begin{defn}
We define a homomorphism $\lambda:\mathcal{A}_{z}^{0}\to\mathcal{A}^{0}$
by the composition
\[
\mathcal{A}_{z}^{0}\xrightarrow{N}\mathcal{A}_{z}^{-1}\xrightarrow{{\rm reg}_{\{z\}}}\mathcal{A}_{z}^{-2}\otimes\mathbb{Z}\left\langle e_{z}\right\rangle \xrightarrow{{\rm id}\otimes{\rm const}}\mathcal{A}_{z}^{-2}\otimes\mathbb{Z}=\mathcal{A}_{z}^{-2}\xrightarrow{\lambda'}\mathcal{A}^{0}.
\]
\end{defn}
\begin{prop}
\label{prop:Pw0_eq_Llambdaw}We have $P_{w}(0)=L(\lambda(w))$ for
$w\in\mathcal{A}_{z}^{0}$.\end{prop}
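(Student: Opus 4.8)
The plan is to reduce the statement to the two special cases that already carry geometric meaning: the case $w\in\mathcal A_z^{-2}$, where the limit $z\to 1$ genuinely exists and $P_w(0)=L(\lambda'(w))$ by definition, and the case $w\in\mathcal A_z^0\cap\mathbb Z\langle e_z\rangle$, which is essentially $w=e_z^n$ and where the polylogarithm $\mathrm{Li}_{\{1\}^n}(z^{-1})$ has an explicit expansion near $z=1$. The bridge between these two cases is the shuffle-factorization built into the maps ${\rm reg}_{\{z,1\}}$, $N$, and ${\rm reg}_{\{z\}}$: by Theorem~\ref{thm:ISD_is_relation} the holomorphic function $L(w)$ is unchanged under $\varphi_\shuffle$ and under $N$, so $L(w)=L(N(w))$, and writing $N(w)=\sum_j u_j\shuffle e_z^{\,n_j}$ with $u_j\in\mathcal A_z^{-2}$ (this is the decomposition ${\rm reg}_{\{z\}}$), Proposition~\ref{prop:ShStDualIdentity}(1) gives $L(w)=\sum_j L(u_j)\,L(e_z^{\,n_j})$.

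First I would record the asymptotics of each factor. For $u_j\in\mathcal A_z^{-2}$, $L(u_j)$ converges to $L(\lambda'(u_j))$ as $z\to 1$, so $P_{u_j}(T)$ is the constant $L(\lambda'(u_j))$. For the factor $e_z^{\,n}$ one has $L(e_z^n)=(-1)^n\mathrm{Li}_{\{1\}^n}(z^{-1})=\frac{(-1)^n}{n!}(\log(1-z^{-1}))^n$, and since $\log(1-z^{-1})=\log(z-1)-\log z$ and $\log z\to 0$ as $z\to1$, we get $L(e_z^n)=\frac{(-1)^n}{n!}\log^n(z-1)+O((z-1)\log^{m}(z-1))$; in particular its constant term $P_{e_z^n}(0)$ is $0$ for $n\ge 1$ and $1$ for $n=0$. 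Multiplying the expansions of the factors and extracting the constant term, only the summands $j$ with $n_j=0$ survive, and for those $L(e_z^{\,0})=1$, so $P_w(0)=\sum_{j:\,n_j=0} L(\lambda'(u_j))=L\bigl(\lambda'(\sum_{j:\,n_j=0}u_j)\bigr)$. But $\sum_{j:\,n_j=0}u_j$ is exactly the image of $N(w)$ under ${\rm reg}_{\{z\}}$ followed by ${\rm id}\otimes{\rm const}$, i.e.\ the element of $\mathcal A_z^{-2}$ produced just before applying $\lambda'$ in the definition of $\lambda$. Hence $P_w(0)=L(\lambda(w))$.

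I should be a little careful about one routine-looking but genuine point: I need that multiplying two expansions of the shape ``polynomial in $\log(z-1)$ plus $O((z-1)\log^m(z-1))$'' again has that shape, which is immediate since $(z-1)\log^a(z-1)\cdot\log^b(z-1)=O((z-1)\log^{a+b}(z-1))$, and I need the uniqueness clause of Lemma~\ref{lem:Pw} to identify the product expansion's constant term with $P_w(0)$ — this is where the error term's stability under multiplication is used. The main obstacle, such as it is, is bookkeeping rather than depth: one must check that the composite ${\rm reg}_{\{z\}}\circ N$ really does send $w$ to $\sum_j u_j\otimes e_z^{\,n_j}$ with $u_j\in\mathcal A_z^{-2}$ and that $L$ respects the shuffle factorization of this particular decomposition (Proposition~\ref{prop:ShStDualIdentity}(1) applies because each $u_j\in\mathcal A_z^{-2}\subset\mathcal A_z^0$ and $e_z^{\,n_j}\in\mathcal A_z^0$), so that the constant-term extraction is legitimate. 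Once those compatibilities are in place, the computation of $P_w(0)$ is forced and the proposition follows.
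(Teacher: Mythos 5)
Your argument is correct and is essentially the paper's own proof: reduce via $L(N(w))=L(w)$ to $N(w)\in\mathcal{A}_z^{-1}$, decompose it as $\sum_k w_k\shuffle e_z^k$ with $w_k\in\mathcal{A}_z^{-2}$ using ${\rm reg}_{\{z\}}$, and observe that $P_{u\shuffle e_z^k}(T)=P_u(T)\,T^k/k!$ so that only the $k=0$ component survives in the constant term, yielding $P_w(0)=L(\lambda'(w_0))=L(\lambda(w))$. The only blemish is a harmless sign slip: $L(e_z^n)=\frac{1}{n!}\log^n(1-z^{-1})$ (no extra $(-1)^n$), but all that is used is that its constant term vanishes for $n\geq1$, which holds either way.
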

\begin{proof}
Put ${\rm reg}_{\{z\}}^{'}=({\rm id}\otimes{\rm const})\circ{\rm reg}_{\{z\}}$.
Since $L(N(w))=L(w)$, it is enough to prove that $P_{w}(0)=L(\lambda'({\rm reg}_{\{z\}}^{'}w))$
for $w\in\mathcal{A}_{z}^{-1}$. Define $w_{0},w_{1},\dots\in\mathcal{A}_{z}^{-2}$
by
\[
{\rm reg}_{\{z\}}w=\sum_{k\geq0}w_{k}\otimes e_{z}^{k}.
\]
Since $P_{u\shuffle e_{z}^{k}}(T)=P_{u}(T)\frac{T^{k}}{k!}$ for $u\in\mathcal{A}_{z}^{0}$
from Lemma \ref{lem:Pw},
\[
P_{w}(T)=\sum_{k\geq0}L(\lambda'(w_{k}))\frac{T^{k}}{k!}.
\]
Thus 
\[
P_{w}(0)=L(\lambda'(w_{0}))=L(\lambda'({\rm reg}_{\{z\}}^{'}w)).
\]
\end{proof}
\begin{thm}
\label{thm:main_relation}For $w\in\mathcal{I}_{{\rm ST}}$, we have
\[
L(\lambda(w))=0.
\]
\end{thm}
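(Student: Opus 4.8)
The plan is to assemble three facts already in hand: that every standard relation is a genuine functional identity for $L$ (Theorem \ref{thm:ISD_is_relation}), that near $z=1$ the function $L(w)$ is approximated by the polynomial $P_w$ in $\log(z-1)$ up to an error of order $(z-1)\log^{m}(z-1)$ (Lemma \ref{lem:Pw}), and that the constant term of this polynomial is precisely $L(\lambda(w))$ (Proposition \ref{prop:Pw0_eq_Llambdaw}). The point is that an honest vanishing of $L(w)$ as a holomorphic function forces the \emph{whole} polynomial $P_w$ to vanish, not just a special value, and then $\lambda$ reads off $P_w(0)=0$.

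Concretely, let $w\in\mathcal{I}_{{\rm ST}}$. By Theorem \ref{thm:ISD_is_relation} we have $L(w)=0$ identically as a holomorphic function of $z$ on $\mathbb{C}\setminus[0,1]$. Feeding this into Lemma \ref{lem:Pw}, there is an $m\in\mathbb{Z}_{>0}$ with $P_w(\log(z-1))=O\bigl((z-1)\log^{m}(z-1)\bigr)$. Now let $z\to1$ from outside $[0,1]$, e.g.\ along the real ray $z>1$, and set $T:=\log(z-1)$, so $T\to-\infty$ while $(z-1)\log^{m}(z-1)\to0$. Hence the real polynomial $P_w$ satisfies $P_w(T)\to0$ as $T\to-\infty$; since a nonzero polynomial of positive degree is unbounded and a nonzero constant polynomial does not tend to $0$, we conclude $P_w=0$ in $\mathbb{R}[T]$, and in particular $P_w(0)=0$. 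Finally, Proposition \ref{prop:Pw0_eq_Llambdaw} gives $L(\lambda(w))=P_w(0)=0$, which is the assertion.

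I do not expect a genuine obstacle here: the real content has already been front-loaded into Theorem \ref{thm:ISD_is_relation}, Lemma \ref{lem:Pw}, and the construction of $N$ and $\lambda$. The only step requiring a line of care is the passage to the limit $z\to1_{+0}$: one should note that the error term $O((z-1)\log^{m}(z-1))$ really tends to $0$ in the chosen regime of approach (which follows since $\log(z-1)\to-\infty$ is beaten by the factor $z-1\to0$), and that this bound on $P_w(\log(z-1))$ constrains the polynomial $P_w$ itself rather than merely its value at one point. Everything else is bookkeeping with the maps introduced in this section.
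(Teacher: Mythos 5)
Your proposal is correct and follows the same route as the paper, which simply cites Theorem \ref{thm:ISD_is_relation} together with Proposition \ref{prop:Pw0_eq_Llambdaw}; your extra paragraph showing that $L(w)=0$ forces $P_w=0$ is just the uniqueness statement of Lemma \ref{lem:Pw} applied to the zero function, spelled out. No issues.
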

\begin{proof}
It follows from Theorem \ref{thm:ISD_is_relation} and Proposition
\ref{prop:Pw0_eq_Llambdaw}.\end{proof}
\begin{defn}
\label{Def:CF}We define a $\mathbb{Z}$-module $\mathcal{I}_{{\rm CF}}$
by 
\[
\mathcal{I}_{{\rm CF}}=\{\lambda(w)\mid w\in\mathcal{I}_{{\rm ST}}\}\subset\mathcal{A}^{0}
\]
and say $w\in\ker\left(L:\mathcal{A}^{0}\rightarrow\mathbb{R}\right)$
is a \emph{confluence relation} if $w\in\mathcal{I}_{{\rm CF}}$.\end{defn}
\begin{rem}
Since $\mathcal{I}_{{\rm ST}}={\rm Im}({\rm id}-\varphi_{\shuffle})$,
$\mathcal{I}_{{\rm CF}}=\{\lambda(w-\varphi_{\shuffle}(w))\mid w\in\mathcal{A}_{z}^{0}\}$.
Since there are $4\cdot3^{k-2}$ words of length $k$ for $k\geq2$,
we can obtain $4\cdot3^{k-2}$ relations of multiple zeta values of
weight $k$ for $k\geq2$. For example, let $w=e_{z}e_{1}e_{0}$.
Then we have
\[
w-\varphi_{\shuffle}(w)=e_{z}e_{1}e_{0}-\left(e_{1}e_{0}\shuffle e_{z}+1\shuffle e_{z}e_{0}^{2}-1\shuffle e_{z}e_{0}e_{z}\right).
\]
Thus 
\[
{\rm reg}_{\{1,z\}}(w-\varphi_{\shuffle}(w))=(e_{z}e_{1}e_{0}-e_{z}e_{0}^{2}-2e_{z}^{2}e_{0})\otimes1+(-e_{1}e_{0}+e_{z}e_{0})\otimes e_{z}.
\]
Since $\tau_{z}(1)=1$ and $\tau_{z}(e_{z})=e_{z}$,
\[
N(w-\varphi_{\shuffle}(w))=(e_{z}e_{1}e_{0}-e_{z}e_{0}^{2}-2e_{z}^{2}e_{0})\shuffle1+(-e_{1}e_{0}+e_{z}e_{0})\shuffle e_{z}.
\]
Thus 
\[
\lambda(w-\varphi_{\shuffle}(w))=\left.(e_{z}e_{1}e_{0}-e_{z}e_{0}^{2}-2e_{z}^{2}e_{0})\right|_{z\to1}=-e_{1}^{2}e_{0}-e_{1}e_{0}^{2}.
\]
Therefore $-e_{1}e_{0}^{2}-e_{1}e_{0}^{2}\in\mathcal{I}_{{\rm CF}}\subset\ker(L)$.
This gives $L(-e_{1}^{2}e_{0}-e_{1}e_{0}^{2})=-\zeta(1,2)+\zeta(3)=0$.
\end{rem}

\begin{rem}
Since $\lambda(N(w))=\lambda(w)$ and $\varphi_{\shuffle}(N(w))=\varphi_{\shuffle}(w)$,
we have $\mathcal{I}_{{\rm CF}}=\{\lambda(w-\varphi_{\shuffle}(w))\mid w\in\mathcal{A}_{z}^{-1}\}$.
Since $\lambda={\rm reg}\circ\mid_{z\to1}$ on $\mathcal{A}_{z}^{-1}$
and $\varphi_{\shuffle}(\mathcal{A}_{z}^{0})\subset\mathcal{A}_{z}^{-1}$,
we have 
\[
\mathcal{I}_{{\rm CF}}=\{{\rm reg}((w-\varphi_{\shuffle}(w))\mid_{z\to1})\mid w\in\mathcal{A}_{z}^{-1}\}.
\]
\end{rem}
\begin{conjecture}
\label{Conj: Confluence-relations}The confluence relations exhaust
all the relations of the multiple zeta values, i.e.,
\[
\mathcal{I}_{{\rm CF}}\otimes\mathbb{Q}={\rm ker}(L:\mathcal{A}^{0}\to\mathbb{R})\otimes\mathbb{Q}.
\]

\end{conjecture}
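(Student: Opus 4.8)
The inclusion $\mathcal{I}_{{\rm CF}}\otimes\mathbb{Q}\subseteq\ker(L)\otimes\mathbb{Q}$ is exactly Theorem \ref{thm:main_relation}, so the content of the conjecture is the reverse inclusion: every $\mathbb{Q}$-linear relation among multiple zeta values of a fixed weight $k$ already lies in $\mathcal{I}_{{\rm CF}}$. Fix $k$ and write $\mathcal{A}_{k}^{0}$ for the degree-$k$ part of $\mathcal{A}^{0}$, $\mathcal{Z}_{k}\subset\mathbb{R}$ for the $\mathbb{Q}$-span of the weight-$k$ MZVs, and $C_{k}\subseteq K_{k}$ for the degree-$k$ parts of $\mathcal{I}_{{\rm CF}}\otimes\mathbb{Q}$ and of $\ker(L)\otimes\mathbb{Q}$. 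Since $C_{k}\subseteq K_{k}$, the quotient $(\mathcal{A}_{k}^{0}\otimes\mathbb{Q})/C_{k}$ surjects onto $(\mathcal{A}_{k}^{0}\otimes\mathbb{Q})/K_{k}\cong\mathcal{Z}_{k}$, so $\dim_{\mathbb{Q}}((\mathcal{A}_{k}^{0}\otimes\mathbb{Q})/C_{k})\geq\dim_{\mathbb{Q}}\mathcal{Z}_{k}$ holds automatically, and the conjecture is equivalent to the reverse inequality $\dim_{\mathbb{Q}}((\mathcal{A}_{k}^{0}\otimes\mathbb{Q})/C_{k})\leq\dim_{\mathbb{Q}}\mathcal{Z}_{k}$.

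The plan is to attack this through two independent steps. The first is purely algebraic: prove that $\dim_{\mathbb{Q}}((\mathcal{A}_{k}^{0}\otimes\mathbb{Q})/C_{k})\leq d_{k}$, where $d_{k}$ is the sequence defined by $d_{0}=1$, $d_{1}=0$, $d_{2}=1$, $d_{k}=d_{k-2}+d_{k-3}$, which counts the indices of weight $k$ with every entry in $\{2,3\}$. Here one would use that $\mathcal{I}_{{\rm CF}}$ already contains the regularized double shuffle relations and the duality relations (Theorems \ref{thm:RDS-from-ST} and \ref{thm:duality-from-ST}), together with the further relations that $\lambda$ produces out of the full ideal $\mathcal{I}_{{\rm ST}}$, to run a descent rewriting each $\zeta(k_{1},\dots,k_{d})$, modulo $C_{k}$, as a $\mathbb{Q}$-linear combination of Hoffman's $\{2,3\}$-indexed values; counting those indices then gives the bound. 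Much of the structure needed for such a descent is already present: $\varphi_{\otimes}$ behaves like a coaction and the tower of derivatives $\partial_{z,\bullet}$ supplies the inductive filtration, so the idea is to imitate Brown's proof that the motivic $\{2,3\}$-values span, but carried out directly with the concrete relations in $\mathcal{I}_{{\rm CF}}$ --- which is precisely the situation the confluence construction was designed to create.

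The second step is the matching lower bound $\dim_{\mathbb{Q}}\mathcal{Z}_{k}\geq d_{k}$. Granting this, together with the algebraic step and the Goncharov--Deligne--Terasoma upper bound $\dim_{\mathbb{Q}}\mathcal{Z}_{k}\leq d_{k}$, the chain
\[
d_{k}\leq\dim_{\mathbb{Q}}\mathcal{Z}_{k}\leq\dim_{\mathbb{Q}}((\mathcal{A}_{k}^{0}\otimes\mathbb{Q})/C_{k})\leq d_{k}
\]
collapses to equalities throughout, forcing $C_{k}=K_{k}$ and hence the conjecture. The main obstacle is this very step: $\dim_{\mathbb{Q}}\mathcal{Z}_{k}\geq d_{k}$ is Zagier's dimension conjecture, which is open and known to require genuine transcendence input --- it would follow from Grothendieck's period conjecture for the category of mixed Tate motives over $\mathbb{Z}$ --- so no presently available technique reaches it. Even the algebraic step is not soft: the analogous statement that the regularized double shuffle relations alone cut the weight-$k$ space down to dimension $\leq d_{k}$ is itself a standing open problem, so the descent above must make essential use of the relations in $\mathcal{I}_{{\rm CF}}$ beyond regularized double shuffle and duality. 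Accordingly a complete proof is not to be expected from the methods of the present paper; the realistic goal, carried out in Appendix \ref{sec:Table-of-relation}, is an unconditional verification in low weight, consistent with the conjecture, and it is worth noting that Conjecture \ref{Conj: Confluence-relations} is in any case implied by the widely held conjecture that the regularized double shuffle relations already exhaust all relations, via the inclusion ${\rm RDS}+{\rm duality}\subseteq\mathcal{I}_{{\rm CF}}$.
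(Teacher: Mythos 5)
This statement is a conjecture, and the paper offers no proof of it; there is therefore nothing in the paper to compare your argument against. Your assessment is accurate on every essential point: the inclusion $\mathcal{I}_{{\rm CF}}\otimes\mathbb{Q}\subseteq\ker(L)\otimes\mathbb{Q}$ is indeed Theorem \ref{thm:main_relation}, the reverse inclusion is the genuinely open content, and the two-step strategy you outline (an algebraic upper bound $\dim_{\mathbb{Q}}((\mathcal{A}_{k}^{0}\otimes\mathbb{Q})/C_{k})\leq d_{k}$ plus Zagier's lower bound $\dim_{\mathbb{Q}}\mathcal{Z}_{k}\geq d_{k}$) is the standard template, with the second step requiring transcendence input far beyond any available method and the first step open even for RDS alone. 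Your closing observation is also correct and is in fact exactly the evidence the paper itself adduces: by Theorems \ref{thm:RDS-from-ST} and \ref{thm:duality-from-ST}, the conjecture is implied by (hence at least as plausible as) the standard conjecture that regularized double shuffle exhausts all relations, via $\mathcal{I}_{{\rm RDS}}\subseteq\mathcal{I}_{{\rm CF}}\subseteq\ker(L)$. Two small corrections: Appendix \ref{sec:Table-of-relation} is only a table of the relations $\lambda(w-\varphi_{\shuffle}(w))$ up to weight $4$, not a verification that $\mathcal{I}_{{\rm CF}}$ equals $\ker(L)$ in low weight (such a verification would additionally require knowing $\dim_{\mathbb{Q}}\mathcal{Z}_{k}$, which is itself unknown unconditionally for $k\geq 2$ beyond the irrationality of $\zeta(2)$ and $\zeta(3)$); and for the implication from RDS exhaustiveness only the inclusion $\mathcal{I}_{{\rm RDS}}\subseteq\mathcal{I}_{{\rm CF}}$ is needed, the duality inclusion being superfluous there. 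Since no proof is possible with present knowledge, correctly declining to supply one and reducing the question to known open problems is the right outcome here.
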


\section{\label{sec:Regularized-double-shuffle}Regularized double shuffle
and duality relations are in $\mathcal{I}_{{\rm CF}}$.}

In this section we shall prove that the regularized double shuffle
relations and the duality relations of the multiple zeta values are
confluence relations. The results in this section give theoretical
supports of our Conjecture \ref{Conj: Confluence-relations}.

\subsection{The confluence relations imply the regularized double shuffle relations}

We denote by $\mathcal{I}_{{\rm RDS}}$ the ideal of $(\mathcal{A}^{0},\shuffle)$
generated by the regularized double shuffle relations, i.e.,
\[
\{{\rm reg}_{\shuffle}(u\shuffle v-u*v)\mid u\in\mathcal{A}^{1},v\in\mathcal{A}^{0}\}.
\]

\begin{lem}
\label{lem:partial_vs_reg}For $c\in\{0,1\}$, 
\[
\partial_{z,c}{\rm reg}_{\shuffle}(w)={\rm reg}_{\shuffle}(\partial_{z,c}w).
\]
\end{lem}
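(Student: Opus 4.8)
The plan is to reduce the identity to the explicit decomposition of $\mathcal{A}_{z}^{1}$ underlying the definition of ${\rm reg}_{\shuffle}$ and to observe that $\partial_{z,c}$ acts on it componentwise. Two ingredients are needed: (a) $\partial_{z,c}$ preserves $\mathcal{A}_{z}^{0}$ (noted right after the definition of $\partial_{\alpha,\beta}$) and is a derivation for $\shuffle$ by Theorem \ref{thm:Three_applications}(1); and (b) $\partial_{z,c}(e_{1}^{i})=0$ for every $i\ge0$ and every $c\in\{0,1\}$.

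First I would verify (b) directly from the definition of $\partial_{z,c}$. In the word $e_{1}^{i}$ every letter is $e_{1}$, and with the boundary convention $a_{0}=0$, $a_{i+1}=1$ each consecutive pair $\{a_{j-1},a_{j}\}$ (and likewise $\{a_{j},a_{j+1}\}$) equals $\{0,1\}$ or $\{1\}$. Since $c\in\{0,1\}$, the set $\{z,c\}$ contains the symbol $z$ and hence equals none of $\{0,1\}$ or $\{1\}$, so every Kronecker delta appearing in the expansion of $\partial_{z,c}(e_{1}^{i})$ vanishes; the case $i=0$ is trivial.

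Next I would write $w=\sum_{i=0}^{\deg(w)}w_{i}\shuffle e_{1}^{i}$ with $w_{i}\in\mathcal{A}_{z}^{0}$, the unique such expression, so that ${\rm reg}_{\shuffle}(w)=w_{0}$. Applying $\partial_{z,c}$ and using (a) and (b),
\[
\partial_{z,c}(w)=\sum_{i=0}^{\deg(w)}\Bigl((\partial_{z,c}w_{i})\shuffle e_{1}^{i}+w_{i}\shuffle\partial_{z,c}(e_{1}^{i})\Bigr)=\sum_{i=0}^{\deg(w)}(\partial_{z,c}w_{i})\shuffle e_{1}^{i},
\]
and each $\partial_{z,c}w_{i}$ lies in $\mathcal{A}_{z}^{0}$. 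This is exactly an expression of $\partial_{z,c}(w)$ of the shape used to define ${\rm reg}_{\shuffle}$, so by uniqueness ${\rm reg}_{\shuffle}(\partial_{z,c}w)$ is its degree-zero component $\partial_{z,c}w_{0}=\partial_{z,c}{\rm reg}_{\shuffle}(w)$, as claimed. There is no real obstacle here; the only point to be slightly careful about is that ${\rm reg}_{\shuffle}(\partial_{z,c}w)$ should make sense, i.e. $\partial_{z,c}w\in\mathcal{A}_{z}^{1}$, but the displayed computation already exhibits $\partial_{z,c}w$ in the required form, so this is automatic (equivalently, $\partial_{z,c}$ visibly preserves $\mathcal{A}_{z}^{1}$ for the same reason).
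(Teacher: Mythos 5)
Your proof is correct and follows essentially the same route as the paper's: verify $\partial_{z,c}(e_{1}^{i})=0$ from the definition, apply the derivation property of Theorem \ref{thm:Three_applications}(1) to the decomposition $w=\sum_{i}w_{i}\shuffle e_{1}^{i}$, and conclude by uniqueness of that decomposition. You simply spell out the details (the Kronecker-delta check, the stability of $\mathcal{A}_{z}^{0}$ under $\partial_{z,c}$, and the well-definedness of ${\rm reg}_{\shuffle}(\partial_{z,c}w)$) that the paper leaves implicit.
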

\begin{proof}
From $\partial_{z,c}(e_{1}^{i})=0$ and (1) of Theorem \ref{thm:Three_applications},
we have
\[
\partial_{z,c}(u\shuffle e_{1}^{i})=\partial_{z,c}(u)\shuffle e_{1}^{i}\ \ \ (c\in\{0,1\}).
\]
Thus the lemma is proved. \end{proof}
\begin{thm}
\label{thm:RDS-from-ST}$\mathcal{I}_{{\rm RDS}}\subset\mathcal{I}_{{\rm CF}}$.\end{thm}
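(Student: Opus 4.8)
The plan is to show that every generator ${\rm reg}_{\shuffle}(u \shuffle v - u*v)$ with $u \in \mathcal{A}^1$, $v \in \mathcal{A}^0$ lies in $\mathcal{I}_{{\rm CF}}$, and then upgrade this to the whole ideal $\mathcal{I}_{{\rm RDS}}$. The natural strategy is to exhibit, for each generator $g = {\rm reg}_{\shuffle}(u \shuffle v - u*v) \in \mathcal{A}^0$, an explicit element $w \in \mathcal{I}_{{\rm ST}}$ with $\lambda(w) = g$, possibly modulo lower-complexity elements of $\mathcal{I}_{{\rm CF}}$ handled by induction. The element $w$ should be built from the lift of $u \shuffle v - u*v$ into $\mathcal{A}_z^0$, using that $\mathcal{A}^0 \subset \mathcal{A}_z^0$.

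First I would reduce to the case $v \in \mathcal{A}^0$ and $u = e_1^n$ for some $n \geq 0$: since $\mathcal{I}_{{\rm RDS}}$ is defined as the $\shuffle$-ideal generated by ${\rm reg}_{\shuffle}(u \shuffle v - u * v)$, and since by the standard theory (Ihara--Kaneko--Zagier) the regularized double shuffle space is already generated by the ``$u = e_1$'' relations together with the finite double shuffle relations $u \shuffle v - u*v$ with $u, v \in \mathcal{A}^0$, it suffices to treat (a) $u, v \in \mathcal{A}^0$ and (b) $u = e_1$, $v \in \mathcal{A}^0$; that $\mathcal{I}_{{\rm CF}}$ is a $\shuffle$-ideal of $\mathcal{A}^0$ would need to be checked, but this should follow from $\mathcal{I}_{{\rm ST}} = \ker \varphi_\shuffle$ being a $\shuffle$-ideal of $\mathcal{A}_z^0$ together with compatibility of $\lambda$ with $\shuffle$ on the relevant subspaces. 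For case (a), where $u, v \in \mathcal{A}^0 \subset \mathcal{A}_z^0$, the key point is that $u \shuffle v - u * v$, viewed in $\mathcal{A}_z^0$, already lies in $\mathcal{I}_{{\rm ST}}$: indeed $\varphi_\shuffle$ and $\varphi_*$ agree with the identity on $\mathcal{A}^0$-words once we track how $\partial_{1,b}$ acts (words with no $e_z$ get killed by $\partial_{1,0}$ only through the boundary conventions), and then one applies ${\rm Const}$, $\shuffle$, $*$ to see $\varphi_\shuffle(u \shuffle v) = \varphi_\shuffle(u) \shuffle \varphi_\shuffle(v)$ and $\varphi_*(u * v) = u * \varphi_*(v)$; combined with Proposition~\ref{prop:ShStDualIdentity} parts (1),(2) this forces $u \shuffle v - u*v \in \ker \varphi_\shuffle = \mathcal{I}_{{\rm ST}}$, and since this element already lies in $\mathcal{A}^0$ where $\lambda$ acts as the identity (its image under $N$ and ${\rm reg}_{\{z\}}$ is itself), we get $\lambda(u\shuffle v - u*v) = u \shuffle v - u*v = {\rm reg}_\shuffle(u \shuffle v - u*v)$.

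The substantive case is (b), the regularization relation for $u = e_1$: here $e_1 \shuffle v - e_1 * v$ is \emph{not} in $\mathcal{A}^0$ (it has trailing $e_1$'s and also the stuffle of $e_1$ with $v$ produces terms outside $\mathcal{A}^0$), so one cannot argue so directly. My approach would be to work with the replacement $e_z$ for $e_1$: consider $w := e_z \shuffle v - e_z * v \in \mathcal{A}_z^0$ (note $e_z \in \mathcal{A}_z^0$ and the generalized stuffle $* : \mathcal{A} \times \mathcal{A}_z \to \mathcal{A}_z$ is set up precisely so that $e_0 u * e_z w$-type terms behave well — though here we'd actually want $v * e_z$, using the module structure). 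Using the differential formulas of Theorem~\ref{thm:Three_applications} — part (1) for $\shuffle$ and part (2) for $*$ — together with $\partial_{1,b}(e_z) = \delta_{b,0} \cdot 1 - \delta_{b, ?}$ handled via the boundary conventions $a_0 = 0, a_{n+1}=1$, one should be able to show $w \in \mathcal{I}_{{\rm ST}}$ by verifying ${\rm Const}(\partial_{z,\alpha_1}\cdots \partial_{z,\alpha_r} w) = 0$ for all $r$ and $\alpha_i \in \{0,1\}$, i.e. using characterization (6) of Proposition~\ref{prop:ISD_equiv}. Then compute $\lambda(w)$: the ${\rm reg}_{\{1,z\}}$, $f = \tau_z$, ${\rm reg}_{\{z\}}$, and $|_{z\to 1}$ steps must be tracked, and the claim is that $\lambda(e_z \shuffle v - e_z * v) = {\rm reg}_\shuffle(e_1 \shuffle v - e_1 * v)$ — the point being that sending $z \to 1$ turns $e_z$ into $e_1$, while the ${\rm reg}_{\{z\}}$/${\rm reg}_{\{1,z\}}$ machinery implements exactly the shuffle-regularization ${\rm reg}_\shuffle$ that strips trailing $e_1$'s. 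Lemma~\ref{lem:partial_vs_reg} is flagged as an ingredient precisely because one needs $\partial_{z,c}$ to commute with ${\rm reg}_\shuffle$ when pushing the induction or when identifying the polynomial $P_w$.

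The main obstacle I anticipate is the bookkeeping in case (b): precisely matching $\lambda(e_z \shuffle v - e_z * v)$ with ${\rm reg}_\shuffle(e_1 \shuffle v - e_1 * v)$, because $\lambda$ is a four-step composition (through $N$, which itself uses $\tau_z$, then ${\rm reg}_{\{z\}}$, then $|_{z\to1}$) and the stuffle $e_z * v$ mixes $e_z$ and $e_1$ letters in a way that must be disentangled before one can see the trailing-$e_1$ regularization emerge cleanly. A secondary obstacle is verifying $w = e_z \shuffle v - e_z * v \in \mathcal{I}_{{\rm ST}}$: one must be careful that the generalized stuffle here is the $(\mathcal{A}^0,*)$-module action $v \mapsto e_z * v$ (with $e_z$ on the left coming from $\mathcal{A}$? — no, $e_z \notin \mathcal{A}$, so one actually uses $v * e_z$ or reinterprets), and that Theorem~\ref{thm:Three_applications}(2) applies in the form needed. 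Once both cases are settled and the $\shuffle$-ideal property of $\mathcal{I}_{{\rm CF}}$ is in hand, the theorem follows since $\mathcal{I}_{{\rm RDS}}$ is generated as a $\shuffle$-ideal by exactly these generators.
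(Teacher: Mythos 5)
Your case (a) contains a genuine error that sinks the strategy. For $u,v\in\mathcal{A}^0$ the element $u\shuffle v-u*v$ does \emph{not} lie in $\mathcal{I}_{{\rm ST}}$: every word of $\mathcal{A}^0$ contains no $e_z$ and is therefore killed by $\partial_{z,0}$, $\partial_{z,1}$, hence also by $\partial_{1,0}=-\partial_{z,0}-\partial_{z,1}$ and by $\partial_{1,z}$, so $\varphi_{\otimes}(w)=w\otimes1$ and $\varphi_{\shuffle}(w)=w$ for all $w\in\mathcal{A}^0$. Consequently $\mathcal{I}_{{\rm ST}}\cap\mathcal{A}^0=\{0\}$, and a nonzero finite double shuffle element is never a standard relation. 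Your argument conflates ``lies in $\ker L$'' (which is what Proposition \ref{prop:ShStDualIdentity}(1),(2) gives) with ``lies in $\ker\varphi_{\shuffle}$''; these are entirely different, and the whole point of the construction is that a relation must first be lifted out of $\mathcal{A}^0$ into $\mathcal{A}_z^0$ before $\varphi_{\shuffle}$ can detect it. Your reduction to cases (a), (b) also imports the Ihara--Kaneko--Zagier generation theorem and an unproved claim that $\mathcal{I}_{{\rm CF}}$ is a $\shuffle$-ideal, neither of which is needed.

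The missing idea --- which also repairs the type problem you notice in case (b), where you try to feed $e_z\notin\mathcal{A}$ into the left slot of $*$ --- is to substitute $e_1\mapsto e_z$ in $v$ rather than in $u$. The paper fixes $u\in\mathcal{A}^1$ and $v\in\mathcal{A}^0$, sets $f(w):={\rm reg}_{\shuffle}(u\shuffle w-u*w)$ for $w\in\mathcal{A}_z^0\cap\mathbb{Z}\left\langle e_0,e_z\right\rangle$ (so $u$ stays in $\mathcal{A}$ and the stuffle is well-typed), and proves $f(w)\in\mathcal{I}_{{\rm ST}}$ by induction on $\deg w$: one has $\partial_{z,c}f(w)=f(\partial_{z,c}w)$ by Lemma \ref{lem:partial_vs_reg} and Theorem \ref{thm:Three_applications}(1),(2) (using $\partial_{z,c}u=0$), and ${\rm Const}(f(w))=0$ because every $w$ of positive degree in this space contains $e_z$; characterization (6) of Proposition \ref{prop:ISD_equiv} then closes the induction. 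Applying this to $v_z:=v|_{1\to z}$ gives $f(v_z)\in\mathcal{I}_{{\rm ST}}\cap\mathcal{A}_z^{-2}$, where $\lambda$ reduces to $|_{z\to1}$, so $\lambda(f(v_z))={\rm reg}_{\shuffle}(u\shuffle v-u*v)\in\mathcal{I}_{{\rm CF}}$ in one stroke for all $u\in\mathcal{A}^1$ --- no splitting into finite and regularized cases, and none of the $N$, ${\rm reg}_{\{z\}}$ bookkeeping you were anticipating.
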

\begin{proof}
Fix $u\in\mathcal{A}^{1}$ and $v\in\mathcal{A}^{0}$. Since $\mathcal{I}_{{\rm RDS}}$
is an ideal, it suffices to prove that
\[
{\rm reg}_{\shuffle}(u\shuffle v-u*v)\in\mathcal{I}_{{\rm CF}}.
\]
Put 
\[
f(w):={\rm reg}_{\shuffle}(u\shuffle w-u*w)\ \ \ (w\in\mathcal{A}_{z}^{0}\cap\mathbb{Z}\left\langle e_{0},e_{z}\right\rangle ).
\]
Then we have $\partial_{z,c}f(w)=f(\partial_{z,c}w)$ for all $c\in\{0,1\}$
by Lemma \ref{lem:partial_vs_reg} and (1), (2) of Theorem \ref{thm:Three_applications}.
Therefore, we can show that $f(w)\in\mathcal{I}_{{\rm ST}}$ by the
induction on the degree of $w$ since ${\rm Const}(f(w))=0$. Especially,
we have
\[
f(v_{z})={\rm reg}_{\shuffle}(u\shuffle v_{z}-u*v_{z})\in\mathcal{I}_{{\rm ST}}
\]
where $v_{z}:=v|_{1\rightarrow z}\in\mathbb{Z}\oplus e_{z}\mathbb{Q}\left\langle e_{0},e_{z}\right\rangle e_{0}$.
Since $f(v_{z})\in\mathcal{A}_{z}^{-2}$, we have
\[
\mathcal{I}_{{\rm CF}}\ni\lambda(f(v_{z}))=\left.f(v_{z})\right|_{z\to1}={\rm reg}_{\shuffle}(u\shuffle v-u*v).
\]

\end{proof}

\subsection{The confluence relations imply the duality relations}

Let $\tau_{\infty}$ be an antiautomorphism of $\mathcal{A}$ defined
by $\tau_{\infty}(e_{0})=-e_{1}$, $\tau_{\infty}(e_{1})=-e_{0}$.
Set $\Delta(w):=w-\tau_{\infty}(w)$ and $\Delta_{z}(w):=w-\tau_{z}(w)$.
Let $\mathcal{I}_{{\rm \Delta}}$ denote the ideal of $\mathcal{A}^{0}$
generated $\{\Delta(w)\mid w\in\mathcal{A}^{0}\}.$ In this section
we prove $\mathcal{I}_{\Delta}\subset\mathcal{I}_{{\rm ST}}$.
\begin{lem}
\label{cor:V-supportedness_of_delta-z}For $w\in\mathcal{A}_{z}^{0}$,
$\varphi_{\otimes}(\Delta_{z}(w))\in\mathcal{I}_{{\rm \Delta}}\otimes\mathcal{I}_{{\rm ST}}$.\end{lem}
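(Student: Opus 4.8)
The plan is to reduce the lemma to one operator identity: that $\varphi_{\otimes}$ intertwines $\Delta_{z}$ on its source with $\Delta\otimes\mathrm{id}$ on its target,
\[
\varphi_{\otimes}\circ\Delta_{z}=(\Delta\otimes\mathrm{id})\circ\varphi_{\otimes}\qquad\text{on }\mathcal{A}_{z}^{0}.
\]
Once this is in hand the lemma follows at once: for $w\in\mathcal{A}_{z}^{0}$ every left leg of $(\Delta\otimes\mathrm{id})\varphi_{\otimes}(w)$ is $\Delta$ applied to some $\mathrm{Const}(\partial_{1,b_{1}}\cdots\partial_{1,b_{r}}w)\in\mathcal{A}^{0}$, hence lies in $\Delta(\mathcal{A}^{0})\subseteq\mathcal{I}_{\Delta}$, while by Proposition \ref{prop:phi_tensor_in_good_space} every right leg lies in $\mathbb{Z}\langle e_{0},e_{z}\rangle\cap\mathcal{A}_{z}^{0}$; so $\varphi_{\otimes}(\Delta_{z}(w))$ lies in $\mathcal{I}_{\Delta}\otimes\bigl(\mathbb{Z}\langle e_{0},e_{z}\rangle\cap\mathcal{A}_{z}^{0}\bigr)$, which gives the lemma. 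Thus the whole content is the displayed identity, and I would split its proof into two ingredients.

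The first ingredient is that the operators $\partial_{1,0}$ and $\partial_{1,z}$ appearing in the definition of $\varphi_{\otimes}$ commute with $\tau_{z}$ on $\mathcal{A}_{z}^{0}$. Since $\partial_{\alpha,\beta}$ depends only on the unordered pair $\{\alpha,\beta\}$, we have $\partial_{1,z}=\partial_{z,1}$; and by Proposition \ref{prop:deriv_sum_z0_z1_10} we may write $\partial_{1,0}=-\partial_{z,0}-\partial_{z,1}$ on $\mathcal{A}_{z}^{0}$. Part (3) of Theorem \ref{thm:Three_applications}, since $\tau_{z}$ is an involution, says exactly that $\partial_{z,0}$ and $\partial_{z,1}$ commute with $\tau_{z}$ on $\mathcal{A}_{z}^{0}$, hence so do $\partial_{1,0}$ and $\partial_{1,z}$. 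Using $\partial_{\alpha,\beta}(\mathcal{A}_{z}^{0})\subseteq\mathcal{A}_{z}^{0}$ and $\tau_{z}(\mathcal{A}_{z}^{0})\subseteq\mathcal{A}_{z}^{0}$, these commutations chain through iterated derivatives, so that $\partial_{1,b_{1}}\cdots\partial_{1,b_{r}}\circ\tau_{z}=\tau_{z}\circ\partial_{1,b_{1}}\cdots\partial_{1,b_{r}}$ on $\mathcal{A}_{z}^{0}$ for all $b_{i}\in\{0,z\}$.

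The second ingredient is the elementary identity $\mathrm{Const}\circ\tau_{z}=\tau_{\infty}\circ\mathrm{Const}$ of maps $\mathcal{A}_{z}\to\mathcal{A}$, which I would verify on a monomial $e_{a_{1}}\cdots e_{a_{n}}$. If some $a_{i}=z$, then that slot stays $e_{z}$ in every term of $\tau_{z}(e_{a_{1}}\cdots e_{a_{n}})$, so both sides vanish. If all $a_{i}\in\{0,1\}$, then expanding $\tau_{z}(e_{a_{n}})\cdots\tau_{z}(e_{a_{1}})$ by $\tau_{z}(e_{0})=e_{z}-e_{1}$, $\tau_{z}(e_{1})=e_{z}-e_{0}$ and keeping only the $e_{z}$-free summands amounts to reversing the word and sending $e_{0}\mapsto-e_{1}$, $e_{1}\mapsto-e_{0}$ in each slot, i.e.\ applying $\tau_{\infty}$; this equals $\tau_{\infty}(\mathrm{Const}(e_{a_{1}}\cdots e_{a_{n}}))$. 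With both ingredients available, applying the first termwise and then the second gives
\begin{align*}
\varphi_{\otimes}(\tau_{z}w)
&=\sum_{r,\,b_{1},\dots,b_{r}}\tau_{\infty}\bigl(\mathrm{Const}(\partial_{1,b_{1}}\cdots\partial_{1,b_{r}}w)\bigr)\otimes e_{b_{1}}\cdots e_{b_{r}}\\
&=(\tau_{\infty}\otimes\mathrm{id})\,\varphi_{\otimes}(w),
\end{align*}
and subtracting this from $\varphi_{\otimes}(w)$ yields $\varphi_{\otimes}(\Delta_{z}w)=\bigl((\mathrm{id}-\tau_{\infty})\otimes\mathrm{id}\bigr)\varphi_{\otimes}(w)=(\Delta\otimes\mathrm{id})\varphi_{\otimes}(w)$, as desired.

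I do not expect a genuinely hard step here: once Theorem \ref{thm:Three_applications}(3) is available the argument is formal. The one point that requires care is that $\tau_{z}$ commutes with $\partial_{z,c}$ only on $\mathcal{A}_{z}^{0}$, so one must keep invoking $\partial_{\alpha,\beta}(\mathcal{A}_{z}^{0})\subseteq\mathcal{A}_{z}^{0}$ at every stage of the iterated derivative before using the commutation — the only place where the bookkeeping could quietly break.
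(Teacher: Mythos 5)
Your proof is correct and takes essentially the same route as the paper's: both rest on Theorem \ref{thm:Three_applications}(3) (which you transport to $\partial_{1,0}=-\partial_{z,0}-\partial_{z,1}$ and $\partial_{1,z}=\partial_{z,1}$) together with the intertwining $\mathrm{Const}\circ\tau_{z}=\tau_{\infty}\circ\mathrm{Const}$, which the paper asserts and you verify on monomials. Note that, exactly like the paper's own argument, what you actually establish is that the left tensor legs lie in $\mathcal{I}_{\Delta}$ while the right legs lie in $\mathbb{Z}\langle e_{0},e_{z}\rangle\cap\mathcal{A}_{z}^{0}$; the ``$\otimes\,\mathcal{I}_{\rm ST}$'' in the statement appears to be a misprint, and only the left-leg containment is used in the proof of Theorem \ref{thm:duality-from-ST}.
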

\begin{proof}
For $r\in\mathbb{Z}_{\geq0}$ and $a_{1},\ldots,a_{r}\in\{0,1\}$,
\[
\mathrm{Const}\left(\partial_{z,a_{1}}\cdots\partial_{z,a_{r}}\Delta_{z}(w)\right)=\mathrm{Const}\left(\Delta_{z}(\partial_{z,a_{1}}\cdots\partial_{z,a_{r}}w)\right)
\]
for $w\in\mathcal{A}_{z}^{0}$ by (3) of Theorem \ref{thm:Three_applications}.
Since $\mathrm{Const}(\Delta_{z}(w))=\Delta(\mathrm{Const}(w))\in I_{{\rm \Delta}}$
for $w\in\mathcal{A}_{z}$, this proves the corollary.\end{proof}
\begin{thm}
\label{thm:duality-from-ST}$\mathcal{I}_{\Delta}\subset\mathcal{I}_{{\rm ST}}$.\end{thm}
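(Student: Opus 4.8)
The plan is to reduce the inclusion $\mathcal{I}_\Delta\subset\mathcal{I}_{\rm ST}$ to a statement that can be attacked by induction on degree, using the characterization $\mathcal{I}_{\rm ST}=\ker(\varphi_\otimes)$ from Proposition \ref{prop:ISD_equiv}(5). Since $\mathcal{I}_\Delta$ is the ideal of $(\mathcal{A}^0,\shuffle)$ generated by $\{\Delta(w)\mid w\in\mathcal{A}^0\}$ and $\mathcal{I}_{\rm ST}$ is an ideal of $(\mathcal{A}_z^0,\shuffle)$ (by the Remark following Definition \ref{def:ISD}), it suffices to show $\Delta(w)\in\mathcal{I}_{\rm ST}$ for every $w\in\mathcal{A}^0$. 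The natural way to produce such an element inside $\mathcal{I}_{\rm ST}$ is to lift the identity $\tau_\infty$ on $\mathcal{A}$ to the involution $\tau_z$ on $\mathcal{A}_z$ and observe that $\Delta_z(w_z)$ — where $w_z$ is a suitable lift of $w$ into $\mathcal{A}_z^0$ obtained by replacing $e_1$ by $e_z$ — should land in $\mathcal{I}_{\rm ST}$, and then specialize $z\to 1$ as in the proof of Theorem \ref{thm:RDS-from-ST}.

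Concretely, first I would write $w=\sum c_I\, e_1 e_0^{k_1-1}\cdots e_1 e_0^{k_d-1}$ and set $w_z:=w|_{1\to z}\in\mathbb{Z}\oplus e_z\mathcal{A}_z e_0\subset\mathcal{A}_z^{-2}$. The first key step is to show that $\Delta_z(w_z)=w_z-\tau_z(w_z)\in\mathcal{I}_{\rm ST}$. By Proposition \ref{prop:ISD_equiv}(5) this is equivalent to $\varphi_\otimes(\Delta_z(w_z))=0$, and Lemma \ref{cor:V-supportedness_of_delta-z} tells us $\varphi_\otimes(\Delta_z(w))\in\mathcal{I}_\Delta\otimes\mathcal{I}_{\rm ST}$ for \emph{every} $w\in\mathcal{A}_z^0$. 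So at this point one already knows the image lies in $\mathcal{I}_\Delta\otimes\mathcal{I}_{\rm ST}$; the remaining task is to upgrade this to genuine vanishing of $\varphi_\otimes(\Delta_z(w_z))$ — or, failing that, to run an induction on degree in which one feeds back the (lower-degree) duality relations that have already been established. Here the inductive structure from the proof of Theorem \ref{thm:ISD_is_relation} is the right framework: $v\in\mathcal{I}_{\rm ST}$ iff ${\rm Const}(v)=0$ and $\partial_{z,0}v,\partial_{z,1}v\in\mathcal{I}_{\rm ST}$; we have ${\rm Const}(\Delta_z(w_z))=\Delta({\rm Const}(w_z))$, which vanishes when $w_z$ involves $e_z$ and otherwise is exactly $\Delta(w)$, and by part (3) of Theorem \ref{thm:Three_applications}, $\partial_{z,c}\Delta_z(w_z)=\Delta_z(\partial_{z,c}w_z)$, so the derivatives are again of the same shape but lower degree. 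This is precisely where the induction closes up, provided one organizes the claim as ``$\Delta_z(u)\in\mathcal{I}_{\rm ST}$ for all $u$ in the span of the $w_z$'s,'' jointly with ``$\Delta(w)\in\mathcal{I}_{\rm ST}\cap\mathcal{A}^0$.''

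Once $\Delta_z(w_z)\in\mathcal{I}_{\rm ST}$ is in hand, the final step is the specialization. Since $w_z\in\mathcal{A}_z^{-2}$ and $\tau_z(w_z)\in\mathcal{A}_z^{-2}$ as well (because $\tau_z(\mathcal{A}_z^0)\subset\mathcal{A}_z^0$ and one checks the support condition), we have $\Delta_z(w_z)\in\mathcal{A}_z^{-2}$, so $\lambda$ acts on it simply as $|_{z\to1}$. Then $\Delta_z(w_z)|_{z\to1}=w_z|_{z\to1}-\tau_z(w_z)|_{z\to1}$; the first term is $w$, and the second is $\tau_\infty(w)$ because $\tau_z(e_0)=e_z-e_1$, $\tau_z(e_1)=e_z-e_0$, $\tau_z(e_z)=e_z$ collapse at $z=1$ to $e_1-e_1=0$... more carefully, $\tau_z$ evaluated after $z\to1$ sends $e_0\mapsto e_1-e_1$? — one must do this substitution in the correct order, first applying $\tau_z$ to $w_z$ inside $\mathcal{A}_z$ and only then setting $z\to1$, which yields $\tau_\infty$ up to the overall sign conventions in the definition of $\tau_\infty$. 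This gives $\Delta(w)=\lambda(\Delta_z(w_z))$, and hence $\Delta(w)\in\mathcal{I}_{\rm ST}$ provided the value $\lambda'$ of an element of $\mathcal{I}_{\rm ST}\cap\mathcal{A}_z^{-2}$ is again in $\mathcal{I}_{\rm ST}$ — but elements of $\mathcal{A}^0$ are fixed by $|_{z\to1}$, so one really needs $\Delta_z(w_z)\in\mathcal{I}_{\rm ST}\cap\mathcal{A}^0$, i.e. that the $e_z$-free part is what survives; this is automatic since $\mathcal{A}^0\cap\mathcal{I}_{\rm ST}$... actually the cleanest phrasing is: $\Delta_z(w_z)$ minus its image under $|_{z\to1}$ lies in $\mathcal{A}_z^{-2}\cap\mathcal{A}_z^0\cdot e_z\cdot(\cdots)$ and one applies the inductive hypothesis termwise. \textbf{The main obstacle} I anticipate is exactly the bookkeeping of this last specialization — ensuring that ``$\Delta_z(w_z)\in\mathcal{I}_{\rm ST}$'' really does descend to ``$\Delta(w)\in\mathcal{I}_{\rm ST}$'' rather than merely to a relation among MZVs, since $\mathcal{I}_{\rm ST}$ lives in $\mathcal{A}_z^0$ and we want membership in the sub-object $\mathcal{A}^0$; this forces one to track the $e_z$-grading carefully through Lemma \ref{cor:V-supportedness_of_delta-z} and close the induction on the combined statement rather than on $\Delta_z(w_z)$ alone.
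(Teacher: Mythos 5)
There is a genuine gap, and it is fatal to the route you chose: the element $\Delta_{z}(w_{z})$ carries no information in the confluence limit. Your Step A is fine — for $w_{z}=w|_{1\to z}\in\mathbb{Z}\left\langle e_{0},e_{z}\right\rangle \cap\mathcal{A}_{z}^{0}$ one indeed has $\Delta_{z}(w_{z})\in\mathcal{I}_{{\rm ST}}$, since ${\rm Const}(\partial_{z,\alpha_{1}}\cdots\partial_{z,\alpha_{r}}w_{z})$ is always a constant (no subword of $w_{z}$ lying in $\mathcal{A}_{z}^{0}$ is $e_{z}$-free except the constants), so $\Delta$ of it vanishes and Lemma \ref{cor:V-supportedness_of_delta-z} gives $\varphi_{\otimes}(\Delta_{z}(w_{z}))=0$; no induction is needed there. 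But Step B fails. First, $\tau_{z}(w_{z})=\tau_{z}(e_{z}u'e_{0})=(e_{z}-e_{1})\tau_{z}(u')e_{z}$ ends in $e_{z}$, so it is \emph{not} in $\mathcal{A}_{z}^{-2}$ and $\lambda$ does not act on it by naive substitution. Second, and decisively, $\lambda(\Delta_{z}(w_{z}))\neq\Delta(w)$: since $L(\Delta_{z}(v))\equiv0$ for all $z$ by Proposition \ref{prop:ShStDualIdentity}(3), the $z$-duality is an identically-zero relation and its limit degenerates. Concretely, for $w=e_{1}e_{0}^{2}$ one computes $\lambda(e_{z}e_{0}^{2})=e_{1}e_{0}^{2}$, $\lambda(e_{z}^{3})=0$, $\lambda(e_{z}e_{1}e_{z})=2e_{1}^{2}e_{0}$, $\lambda(e_{1}e_{z}^{2})=-e_{1}^{2}e_{0}$, $\lambda(e_{1}^{2}e_{z})=e_{1}^{2}e_{0}+e_{1}e_{0}^{2}$, whence $\lambda(\Delta_{z}(e_{z}e_{0}^{2}))=0$, whereas $\Delta(e_{1}e_{0}^{2})=e_{1}e_{0}^{2}+e_{1}^{2}e_{0}\neq0$. (Naive substitution is no better: $\tau_{z}(w_{z})|_{z\to1}=0$ because each factor $e_{z}-e_{1}$ collapses, so you would get $w$ itself, which is not even in $\ker L$.) So the duality relation of MZVs is not the confluence limit of $\Delta_{z}$ applied to the lift $w|_{1\to z}$, and your construction only ever produces the trivial relation. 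A related symptom: since $\varphi_{\otimes}(v)=v\otimes1$ for $v\in\mathcal{A}^{0}$, one has $\mathcal{I}_{{\rm ST}}\cap\mathcal{A}^{0}=0$, so "$\Delta(w)\in\mathcal{I}_{{\rm ST}}$'' cannot be reached by exhibiting $\Delta(w)$ itself as lying in $\ker\varphi_{\otimes}$; one must realize it as the image under $\lambda$ (equivalently ${\rm reg}_{\shuffle}\circ|_{z\to1}$ on $\mathcal{A}_{z}^{-2}$) of a standard relation, possibly up to lower-weight terms.

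The missing idea is the choice of a \emph{different} lift. The paper takes $u=e_{1}u'e_{0}$ and sets $w=\Delta(u)+\Delta_{z}((e_{z}-e_{1})u'e_{0})\in\mathcal{A}_{z}^{-2}$ — a combination of the two dualities, not $\Delta_{z}$ of a lift of $u$. This $w$ satisfies ${\rm Const}(w)=0$ and $w|_{z\to1}=\Delta(u)$ on the nose (the $(e_{z}-e_{1})$-terms cancel at $z=1$), and by Lemma \ref{cor:V-supportedness_of_delta-z} the element $\varphi_{\otimes}(w)$ has all its first tensor factors of the form $\Delta(\cdot)$ in degree $<\deg u$. One then writes
\[
\Delta(u)={\rm reg}_{\shuffle}\bigl(\left.(w-\varphi_{\shuffle}(w))\right|_{z\to1}\bigr)+{\rm reg}_{\shuffle}\bigl(\left.\varphi_{\shuffle}(w)\right|_{z\to1}\bigr),
\]
where the first summand is a confluence relation since $w-\varphi_{\shuffle}(w)\in\mathcal{I}_{{\rm ST}}$, and the second lies in $\mathcal{I}_{\Delta}^{(k-1)}\shuffle\mathcal{A}^{0}$ and is handled by induction on the degree together with the ideal property. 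If you want to salvage your draft, replace $\Delta_{z}(w_{z})$ by this $w$ and set up exactly this induction; the limit bookkeeping you correctly worried about is then trivial because $w\in\mathcal{A}_{z}^{-2}$ by construction.
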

\begin{proof}
Put $\mathcal{I}_{\Delta}^{(k)}:=\{\Delta(u)\mid u\in\mathcal{A}^{0},\ \deg u\leq k\}$.
We prove the claim $\mathcal{I}_{\Delta}^{(k)}\in\mathcal{I}_{{\rm ST}}$
by the induction on $k$. Take $u\in\mathcal{A}^{0}$ such that $\deg u\leq k$.
Since $u\in\mathcal{A}^{0}$, we can assume that $u=e_{1}u'e_{0}$
where $u'\in\mathcal{A}$. Put $w=\Delta(u)+\Delta((e_{z}-e_{1})u'e_{0})\in\mathcal{A}_{z}^{-2}$.
Then we have ${\rm Const}(w)=0$ and $\varphi_{\otimes}(w)\in\mathcal{I}_{\Delta}^{(k-1)}\otimes\mathcal{A}$
from Lemma \ref{cor:V-supportedness_of_delta-z}. Thus ${\rm reg}_{\shuffle}\left(\left.\varphi_{\shuffle}(w)\right|_{z\to1}\right)\in\mathcal{I}_{\Delta}^{(k)}\shuffle\mathcal{A}^{0}\subset\mathcal{I}_{{\rm ST}}$
from the induction assumption. On the other hand, we have ${\rm reg}_{\shuffle}\left(\left.w\right|_{z\to1}\right)=\Delta(u)$.
Thus the theorem is proved since $\Delta(u)={\rm reg}_{\shuffle}(\left.(w-\varphi(w))\right|_{z\to1})+{\rm reg}_{\shuffle}(\left.\varphi(w)\right|_{z\to1})\in\mathcal{I}_{{\rm ST}}$. 
\end{proof}

\section*{Acknowledgements}

This work was supported by a Postdoctoral fellowship at the National
Center for Theoretical Sciences.

The authors would like to thank Erik Panzer for some useful comments
on a draft of this paper.

\bibliographystyle{plain}
\bibliography{reference}

\def\Dbar{\leavevmode\lower.6ex\hbox to 0pt{\hskip-.23ex \accent"16\hss}D}
  \def\cfac#1{\ifmmode\setbox7\hbox{$\accent"5E#1$}\else
  \setbox7\hbox{\accent"5E#1}\penalty 10000\relax\fi\raise 1\ht7
  \hbox{\lower1.15ex\hbox to 1\wd7{\hss\accent"13\hss}}\penalty 10000
  \hskip-1\wd7\penalty 10000\box7}
  \def\cftil#1{\ifmmode\setbox7\hbox{$\accent"5E#1$}\else
  \setbox7\hbox{\accent"5E#1}\penalty 10000\relax\fi\raise 1\ht7
  \hbox{\lower1.15ex\hbox to 1\wd7{\hss\accent"7E\hss}}\penalty 10000
  \hskip-1\wd7\penalty 10000\box7}
\begin{thebibliography}{1}

\bibitem{BBBL_stuffle}
Jonathan~M. Borwein, David~M. Bradley, David~J. Broadhurst, and Petr
  Lison\v{e}k.
\newblock Special values of multiple polylogarithms.
\newblock {\em Trans. Amer. Math. Soc.}, 353(3):907--941, 2001.

\bibitem{Furusho_Associator_DSR}
Hidekazu Furusho.
\newblock Double shuffle relation for associators.
\newblock {\em Ann. of Math. (2)}, 174(1):341--360, 2011.

\bibitem{HIST}
Minoru Hirose, Kohei Iwaki, Nobuo Sato, and Koji Tasaka.
\newblock Sum/duality formulas for iterated integrals and multiple zeta values.
\newblock {\em preprint}, 2017.
\newblock {\tt arXiv:1704.06387v1 [math.NT]}.

\bibitem{HS_AlgebraicDiff}
Minoru Hirose and Nobuo Sato.
\newblock Algebraic differential formulas for the shuffle, stuffle and duality
  relations of iterated integrals.
\newblock {\em preprint}, 2018.
\newblock {\tt arXiv:1801.03165v1 [math.NT]}.

\bibitem{IKZ}
Kentaro Ihara, Masanobu Kaneko, and Don Zagier.
\newblock Derivation and double shuffle relations for multiple zeta values.
\newblock {\em Compos. Math.}, 142(2):307--338, 2006.

\bibitem{Kaneko_Yamamoto_integralseries}
Masanobu Kaneko and Shuji Yamamoto.
\newblock A new integral-series identity of multiple zeta values and
  regularizations.
\newblock {\em preprint}, 2016.
\newblock {\tt arXiv:1605.03117v3 [math.NT]}.

\bibitem{Kawashima}
Gaku Kawashima.
\newblock A class of relations among multiple zeta values.
\newblock {\em J. Number Theory}, 129(4):755--788, 2009.

\end{thebibliography}

\appendix

\section{\label{sec:proofs-of-SomeComplementary}proofs of some complementary
propositions}
\begin{lem}
\label{lem:deriv_a_a}For $\alpha\in\{0,1,z\}$ and $u\in\mathcal{A}_{z}^{0}$,
$\partial_{\alpha,\alpha}(u)=0$.\end{lem}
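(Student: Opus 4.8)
The plan is to reduce to monomials and to exploit that $\{\alpha,\alpha\}=\{\alpha\}$ is a singleton, so that $\partial_{\alpha,\alpha}$ can only detect \emph{pairs of equal adjacent letters}. By $\mathbb{Z}$-linearity it suffices to treat a single word $w=e_{a_{1}}\cdots e_{a_{n}}\in\mathcal{A}_{z}^{0}$. First I would record that $\delta_{\{a_{i},a_{i+1}\},\{\alpha,\alpha\}}=1$ exactly when $a_{i}=a_{i+1}=\alpha$, so that, with the convention $a_{0}=0$, $a_{n+1}=1$ and writing $w_{i}:=e_{a_{1}}\cdots\widehat{e_{a_{i}}}\cdots e_{a_{n}}$,
\[
\partial_{\alpha,\alpha}(w)=\sum_{i=1}^{n}\delta_{\{a_{i},a_{i+1}\},\{\alpha\}}\,w_{i}-\sum_{i=1}^{n}\delta_{\{a_{i-1},a_{i}\},\{\alpha\}}\,w_{i}.
\]

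Next I would re-index the second sum by $j=i-1$, rewriting it as $\sum_{j=0}^{n-1}\delta_{\{a_{j},a_{j+1}\},\{\alpha\}}\,w_{j+1}$, and invoke the elementary observation that if $a_{i}=a_{i+1}$ then $w_{i}=w_{i+1}$ (deleting either letter of a repeated adjacent pair yields the same word). Consequently, for each $i\in\{1,\dots,n-1\}$ the $i$-th term of the first sum cancels the $i$-th term of the re-indexed second sum, so only the two ``boundary'' terms survive:
\[
\partial_{\alpha,\alpha}(w)=\delta_{\{a_{n},1\},\{\alpha\}}\,w_{n}-\delta_{\{0,a_{1}\},\{\alpha\}}\,w_{1}.
\]

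Finally I would use the shape of $\mathcal{A}_{z}^{0}$ to kill both surviving terms. If $\deg w\geq2$ then $w\in e_{a}\mathcal{A}_{z}e_{b}$ for some $a\in\{1,z\}$, $b\in\{0,z\}$, hence $a_{1}\neq0$ and $a_{n}\neq1$, so neither $\{a_{n},1\}$ nor $\{0,a_{1}\}$ is the singleton $\{\alpha\}$; if $\deg w=1$ then $w=e_{z}$, and $\{a_{n},1\}=\{z,1\}$, $\{0,a_{1}\}=\{0,z\}$ are two-element sets; if $\deg w=0$ the displayed sums are empty. In every case $\partial_{\alpha,\alpha}(w)=0$, and linearity finishes the proof. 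The only step requiring any care is tracking the formal endpoints $a_{0}=0$, $a_{n+1}=1$ through the re-indexing and then matching them against the three generating pieces of $\mathcal{A}_{z}^{0}$; there is no substantive obstacle.
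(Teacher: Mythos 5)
Your proof is correct and follows essentially the same route as the paper: the paper's bijection $S=\{i+1\mid i\in T\}$ between the index sets of the two sums is exactly your re-indexing $j=i-1$, both rely on the observation that deleting either letter of an equal adjacent pair gives the same word, and the paper's remark ``$a_{1}\neq a_{0}$ and $a_{n}\neq a_{n+1}$'' is precisely your boundary-term analysis using the shape of $\mathcal{A}_{z}^{0}$.
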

\begin{proof}
Let $u=e_{a_{1}}\cdots e_{a_{n}}$. Put $(a_{0},a_{n+1})=(0,1)$,
$S=\{1\leq i\leq n\mid a_{i}=a_{i-1}=\alpha\}$ and $T=\{1\leq i\leq n\mid a_{i}=a_{i+1}=\alpha\}$.
Then $S=\{i+1\mid i\in T\}$ since $a_{1}\neq a_{0}$ and $a_{n}\neq a_{n+1}$.
From the definition of $\partial_{\alpha,\alpha}$, we have
\begin{align*}
\partial_{\alpha,\alpha}u & =\sum_{i\in S}e_{a_{1}}\cdots\widehat{e_{a_{i}}}\cdots e_{a_{n}}-\sum_{i\in T}e_{a_{1}}\cdots\widehat{e_{a_{i}}}\cdots e_{a_{n}}\\
 & =\sum_{i\in S}e_{a_{1}}\cdots\widehat{e_{a_{i}}}\cdots e_{a_{n}}-\sum_{i\in T}e_{a_{1}}\cdots\widehat{e_{a_{i+1}}}\cdots e_{a_{n}}\\
 & =0.
\end{align*}
\end{proof}
\begin{prop}
\label{prop:deriv_sum_z0_z1_10}For $u\in\mathcal{A}_{z}^{0}$, $\partial_{z,0}(u)+\partial_{z,1}(u)+\partial_{1,0}(u)=0$.\end{prop}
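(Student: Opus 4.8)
The plan is to obtain this identity as the restriction to $\mathcal{A}_z^0$ of a cancellation that in fact holds on all of $\mathcal{A}_z$, using Lemma \ref{lem:deriv_a_a} to discard the ``diagonal'' operators.

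First I would note that $\partial_{\alpha,\beta}$ depends only on the set $\{\alpha,\beta\}$, and I would add up all six operators attached to the nonempty subsets of $\{0,1,z\}$ of size at most two, namely
\[
\partial_{1,0}+\partial_{z,0}+\partial_{z,1}+\partial_{0,0}+\partial_{1,1}+\partial_{z,z}.
\]
Applied to a word $e_{a_1}\cdots e_{a_n}$ with the convention $a_0=0$, $a_{n+1}=1$, the coefficient of $e_{a_1}\cdots\widehat{e_{a_i}}\cdots e_{a_n}$ in this sum is
\[
\sum_{S}\bigl(\delta_{\{a_i,a_{i+1}\},S}-\delta_{\{a_{i-1},a_i\},S}\bigr),
\]
where $S$ ranges over those six subsets. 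Since $a_{i-1},a_i,a_{i+1}\in\{0,1,z\}$ for every $i$ (this is exactly why the boundary letters were fixed as $a_0=0$ and $a_{n+1}=1$), each of the sets $\{a_i,a_{i+1}\}$ and $\{a_{i-1},a_i\}$ equals exactly one of the six $S$, so the coefficient is $1-1=0$. Hence the displayed sum of six operators is the zero endomorphism of $\mathcal{A}_z$.

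Next I would restrict to $\mathcal{A}_z^0$: by Lemma \ref{lem:deriv_a_a} the three diagonal operators $\partial_{0,0},\partial_{1,1},\partial_{z,z}$ vanish on $\mathcal{A}_z^0$, so the identity above collapses to $\partial_{1,0}(u)+\partial_{z,0}(u)+\partial_{z,1}(u)=0$ for all $u\in\mathcal{A}_z^0$, which is the assertion.

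I do not expect a serious obstacle here; the only delicate point is keeping track of the two boundary letters $a_0=0$ and $a_{n+1}=1$ in the definition of $\partial_{\alpha,\beta}$, which is precisely what makes the count ``$1-1$'' valid at the first and last positions, together with the (already established) Lemma \ref{lem:deriv_a_a} needed to kill the diagonal terms. If one preferred an argument not invoking Lemma \ref{lem:deriv_a_a}, one could instead group the positions $1,\dots,n$ of a word in $\mathcal{A}_z^0$ into maximal runs of equal letters and telescope within each run: all deletions inside a fixed run $[j,k]$ produce the same word, and the associated coefficients $[a_i\neq a_{i+1}]-[a_{i-1}\neq a_i]$ telescope to $[a_k\neq a_{k+1}]-[a_{j-1}\neq a_j]=1-1=0$, using that consecutive runs carry distinct letters and that $a_1\in\{1,z\}$ while $a_n\in\{0,z\}$ for words in $\mathcal{A}_z^0$. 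The first argument is shorter, so that is the one I would write up.
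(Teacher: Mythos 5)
Your proof is correct and is essentially identical to the paper's: the paper also sums $\partial_{\alpha,\beta}$ over the six unordered pairs $\{(0,0),(1,1),(z,z),(z,0),(z,1),(1,0)\}$, observes that each coefficient is $1-1=0$ since $\{a_i,a_{i+1}\}$ and $\{a_{i-1},a_i\}$ each match exactly one pair, and then invokes Lemma \ref{lem:deriv_a_a} to discard the diagonal operators on $\mathcal{A}_z^0$.
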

\begin{proof}
Put $u=e_{a_{1}}\cdots e_{a_{n}}$ and $X=\{(0,0),(1,1),(z,z),(z,0),(z,1),(1,0)\}$.
From the definition, we have
\begin{align*}
\sum_{(\alpha,\beta)\in X}\partial_{\alpha,\beta}(u) & =\sum_{i=1}^{n}\sum_{(\alpha,\beta)\in X}\left(\delta_{\{a_{i},a_{i+1}\},\{\alpha,\beta\}}-\delta_{\{a_{i-1},a_{i}\},\{\alpha,\beta\}}\right)e_{a_{1}}\cdots\widehat{e_{a_{i}}}\cdots e_{a_{n}}\\
 & =\sum_{i=1}^{n}\left(1-1\right)e_{a_{1}}\cdots\widehat{e_{a_{i}}}\cdots e_{a_{n}}\\
 & =0.
\end{align*}
Thus $\partial_{z,0}(u)+\partial_{z,1}(u)+\partial_{1,0}(u)=0$ by
Lemma \ref{lem:deriv_a_a}.\end{proof}
\begin{prop}
\label{prop:ConstStuffle}For $u\in\mathcal{A}$ and $v\in\mathcal{A}_{z}$,
${\rm Const}(u*v)=u*{\rm Const}(v)$.\end{prop}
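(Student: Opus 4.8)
The plan is to locate precisely where the stuffle product can delete a letter $e_z$. Let $\mathcal{J}\subset\mathcal{A}_z$ denote the $\mathbb{Z}$-span of the words in which $e_z$ occurs at least once, so that $\mathcal{A}_z=\mathcal{A}\oplus\mathcal{J}$ and $\mathrm{Const}$ is exactly the projection onto $\mathcal{A}$ with kernel $\mathcal{J}$. Since $\mathrm{Const}(u)=u$ for $u\in\mathcal{A}$, it suffices to prove the two module statements: \emph{(a)} $u*w\in\mathcal{A}$ for $u,w\in\mathcal{A}$, and \emph{(b)} $u*w\in\mathcal{J}$ for $u\in\mathcal{A}$ and $w\in\mathcal{J}$. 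Granting these, decompose an arbitrary $v\in\mathcal{A}_z$ as $v=\mathrm{Const}(v)+w$ with $w:=v-\mathrm{Const}(v)\in\mathcal{J}$; by bilinearity $u*v=u*\mathrm{Const}(v)+u*w$, the first term lying in $\mathcal{A}$ by (a) and the second in $\mathcal{J}$ by (b), so applying $\mathrm{Const}$ yields $\mathrm{Const}(u*v)=u*\mathrm{Const}(v)$. Statement (a) is the already recorded fact that $(\mathcal{A},*)$ is a commutative ring, so the whole proof reduces to (b).

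For (b) I would induct on $\deg u+\deg w$. If $u=1$ then $1*w=w\in\mathcal{J}$, and if $w=1$ there is nothing to prove since $1\notin\mathcal{J}$. Otherwise write $u=e_au'$ with $a\in\{0,1\}$ and $w=e_bw'$ with $b\in\{0,1,z\}$, and expand by the defining recursion $e_au'*e_bw'=e_{ab}\bigl(u'*e_bw'+e_au'*w'-e_0(u'*w')\bigr)$. When $b\in\{0,1\}$ the hypothesis $w\in\mathcal{J}$ forces $w'\in\mathcal{J}$, and then $u'*w$, $e_au'*w'$ and $u'*w'$ all lie in $\mathcal{J}$ by the induction hypothesis, hence so does $e_0(u'*w')$ and the whole bracket; since $e_{ab}\in\{e_0,e_1\}$, prepending it preserves $\mathcal{J}$. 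When $b=z$ and $a=1$ we have $e_{ab}=e_z$, so $u*w\in e_z\mathcal{A}_z\subset\mathcal{J}$. The delicate case is $b=z$, $a=0$: here $e_{ab}=e_0$ and the bracket is $u'*e_zw'+e_0u'*w'-e_0(u'*w')$, in which only the first term is manifestly in $\mathcal{J}$ while the last two need not be; this is the one real obstacle.

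To clear it I would first establish the auxiliary claim that a leading $e_0$ on the left factor passes freely through the stuffle product: $e_0u*v=e_0(u*v)$ for all $u\in\mathcal{A}$ and $v\in\mathcal{A}_z$. This follows by a short induction on $\deg u+\deg v$: the cases $v=1$ and $u=1$ are immediate, and for $v=e_bv'$ the recursion applied to the left factor $e_0u$ gives $e_0u*e_bv'=e_0\bigl(u*e_bv'+e_0u*v'-e_0(u*v')\bigr)$ --- the prefix is $e_0$ because $0\cdot b=0$ --- where $e_0u*v'=e_0(u*v')$ by the induction hypothesis, so the last two terms cancel and $e_0u*v=e_0(u*v)$. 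With this claim, in the delicate case of (b) the terms $e_0u'*w'$ and $e_0(u'*w')$ are equal and cancel, leaving $u*w=e_0(u'*e_zw')$; since $u'\in\mathcal{A}$ and $e_zw'\in\mathcal{J}$, the induction hypothesis of (b) gives $u'*e_zw'\in\mathcal{J}$, hence $u*w\in\mathcal{J}$. That closes (b), and therefore the proposition. I expect the $e_0$-factoring claim to be the only point of substance; everything else is bookkeeping with the recursion and the splitting $\mathcal{A}_z=\mathcal{A}\oplus\mathcal{J}$.
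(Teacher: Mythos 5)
Your proof is correct and follows essentially the same route as the paper's: both reduce the statement to showing that $u*v$ lies in $\ker(\mathrm{Const})=\mathcal{A}_{z}e_{z}\mathcal{A}_{z}$ whenever $v$ does, and both run the same induction on $\deg u+\deg v$, with the only nontrivial case (leading $e_{0}$ on the left factor) handled via the identity $e_{0}u*v=e_{0}(u*v)$. The one difference is that you prove that identity explicitly by its own induction, whereas the paper simply asserts it.
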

\begin{proof}
Put $\mathcal{I}:=\mathcal{A}_{z}e_{z}\mathcal{A}_{z}=\ker({\rm Const}).$
It suffices to show that ${\rm Const}(u*v)\in\mathcal{I}$ for all
monomials $u\in\mathcal{A}$ and $v\in\mathcal{I}$. We prove this
by induction on the sum of degrees of $u$ and $v$. Put $u=e_{a}u'$
and $v=e_{b}v'$. Then from the induction hypothesis, we have $u'*v\in\mathcal{I}$,
and $u*v',u'*v'\in\mathcal{I}$ if $b\neq z$. If $a=0$ then we have
$u*v\in\mathcal{I}$ since $e_{0}u'*v=e_{0}(u'*v)$. Assume that $a=1$.
Then we have
\[
u*v=e_{b}(u'*v+u*v'-e_{0}(u'*v'))\in\mathcal{I}
\]
since either $e_{b}=e_{z}$ or $u'*v+u*v'-e_{0}(u'*v')\in\mathcal{I}$
holds.\end{proof}
\begin{prop}
\label{prop:phi_tensor_in_good_space}$\varphi_{\otimes}(\mathcal{A}_{z}^{0})\subset\mathcal{A}^{0}\otimes\left(\mathbb{Z}\left\langle e_{0},e_{z}\right\rangle \cap\mathcal{A}_{z}^{0}\right)$.\end{prop}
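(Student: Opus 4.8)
The plan is to prove $\varphi_{\otimes}(\mathcal{A}_{z}^{0})\subset\mathcal{A}^{0}\otimes\left(\mathbb{Z}\left\langle e_{0},e_{z}\right\rangle \cap\mathcal{A}_{z}^{0}\right)$ by analyzing the two tensor factors separately. The first factor is easy: since $\mathrm{Const}$ takes values in $\mathcal{A}$ and $\partial_{1,b}$ preserves $\mathcal{A}_{z}^{0}$ (stated in the excerpt just after the definition of $\partial_{\alpha,\beta}$), we have $\mathrm{Const}(\partial_{1,b_{1}}\cdots\partial_{1,b_{r}}w)\in\mathcal{A}\cap\mathcal{A}_{z}^{0}=\mathcal{A}^{0}$ for every $w\in\mathcal{A}_{z}^{0}$. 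So the content of the proposition is that each nonzero term carries a second component $e_{b_{1}}\cdots e_{b_{r}}$ lying in $\mathbb{Z}\left\langle e_{0},e_{z}\right\rangle \cap\mathcal{A}_{z}^{0}$, i.e.\ a word in $e_0,e_z$ that either is empty, is $e_z$, or begins with $e_z$ and ends with $e_0$.

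First I would record the combinatorial meaning of $\partial_{1,b}$ for $b\in\{0,z\}$: applying $\partial_{1,b}$ deletes a letter $e_1$ that is adjacent (on the right, with sign $+$, or on the left, with sign $-$) to a letter $e_b$, where the boundary conventions $a_0=0$, $a_{n+1}=1$ apply. The key observations are (i) the phantom right boundary $a_{n+1}=1$ lets $\partial_{1,0}$ act on a word ending in $e_0$ and $\partial_{1,z}$ act on a word ending in $e_z$, producing a $+$ term; and (ii) the phantom left boundary $a_0=0$ lets $\partial_{1,0}$ act on a word beginning with $e_1$, producing a $-$ term; there is no left-boundary contribution for $\partial_{1,z}$ since $a_0=0\neq z$. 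The strategy is then an induction on $r$, proving the slightly stronger statement: for $w\in\mathcal{A}_{z}^{0}$ and $b_{1},\dots,b_{r}\in\{0,z\}$, if $\mathrm{Const}(\partial_{1,b_{1}}\cdots\partial_{1,b_{r}}w)\neq 0$ then $e_{b_r}\cdots e_{b_1}$ — read in the order the derivatives are applied, innermost first — forms an admissible tail, i.e.\ the word $e_{b_1}\cdots e_{b_r}$ lies in $\mathbb{Z}\langle e_0,e_z\rangle\cap\mathcal{A}_z^0$.

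The inductive step tracks, for a monomial $w'$ appearing after applying $\partial_{1,b_r}\cdots\partial_{1,b_{j+1}}$ and having nonzero $\mathrm{Const}$-image after the remaining derivatives, the first and last letters of the relevant sub-word. Concretely: since we need $\mathrm{Const}$ to be nonzero in the end, every letter $e_z$ created must eventually be destroyed, but the $e_z$'s here only get created as the $e_{b_i}$'s in the tensor factor, not inside $w'$; so the real constraint is on how $\partial_{1,b}$ interacts with the already-present $e_z$'s of $w$ and with the boundary letters. The point is that to get a $+$-sign deletion by $\partial_{1,z}$ one needs an $e_z$ adjacent to the deleted $e_1$, and chasing which $e_z$ this is, together with the admissibility $w\in\mathcal{A}_z^0$ (which forbids $e_0$ at the far left and $e_1$ at the far right, and forbids consecutive patterns that would leave $\mathrm{Const}\neq0$ impossible), forces $b_1$ among the "legal starts" and $b_r$ among the "legal ends". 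I would make this precise by showing: $b_1=z$ must hold (else the outermost deletion uses the left boundary $a_0=0$, forcing $b_1=0$, but then — examining the word after all inner derivatives — one finds its $\mathrm{Const}$ contains an $e_z$ or is zero), and $b_r=0$ must hold unless $r=1$ and the whole word is just $e_z$ (the case $w\in\mathbb{Z}\oplus\mathbb{Z}e_z$), by the symmetric analysis at the right boundary $a_{n+1}=1$. Intermediate $b_i$'s are unconstrained since $\{0,z\}\subset\{0,z\}$ trivially.

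The main obstacle I anticipate is the bookkeeping in the inductive step: after several $\partial_{1,b}$'s the word $w'$ is a signed sum of monomials, and one must argue that the monomials whose eventual $\mathrm{Const}$ survives all have the right boundary letters, rather than relying on cancellation. I expect this is handled cleanly by instead working with the equivalent characterization from Proposition~\ref{prop:ISD_equiv}-style reasoning, or more simply by noting $\partial_{1,0}+\partial_{z,0}+\partial_{z,1}=0$ on $\mathcal{A}_z^0$ (Proposition~\ref{prop:deriv_sum_z0_z1_10}) to rewrite $\partial_{1,b}$ and isolate the boundary-dependent part. A slick alternative, which I would try first, is to use the identity from the proof of Lemma~\ref{lem:idem}, namely $\partial_{1,b}(e_{c_1}\cdots e_{c_s})=\delta_{b,c_s}e_{c_1}\cdots e_{c_{s-1}}$ for $c_i\in\{0,z\}$, applied not to $w$ but to $\tau_z(w)$ or to images under $\varphi_{\otimes}$ itself — combined with Lemma~\ref{lem:idem} ($\varphi_{\otimes}\circ\varphi_{\shuffle}=\varphi_{\otimes}$) — to reduce the general case to words already in $\mathbb{Z}\langle e_0,e_z\rangle$, where the claim is immediate from that $\delta$-identity. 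This reduction via $\varphi_{\shuffle}$ is, I suspect, the cleanest route and sidesteps the boundary bookkeeping entirely.
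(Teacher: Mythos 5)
Your reading of the target space derails half of the plan. Since $\mathcal{A}_z^0$ permits final letters in $\{0,z\}$, a word in $e_0,e_z$ lies in $\mathbb{Z}\langle e_0,e_z\rangle\cap\mathcal{A}_z^0$ if and only if it is empty or begins with $e_z$; there is \emph{no} condition on the last letter. Consequently the strengthened statement you propose to prove by induction --- that $b_r=0$ unless $r\le 1$ --- is false: for $w=e_ze_z\in\mathcal{A}_z^0$ one computes $\partial_{1,z}\partial_{1,z}(e_ze_z)=\partial_{1,z}(e_z)=1$, so $\varphi_{\otimes}(e_ze_z)=1\otimes e_ze_z$, a nonzero term with $r=2$ and $b_2=z$ (similarly $1\otimes e_z^3$ occurs in $\varphi_\otimes(e_ze_1e_z)$). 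So that branch of your induction cannot close, and it is not needed. The branch that \emph{is} needed --- if $r\ge 1$ and the term is nonzero then $b_1=z$ --- amounts exactly to the claim $\mathrm{Const}(\partial_{1,0}u)=0$ for all $u\in\mathcal{A}_z^0$ (apply it to $u=\partial_{1,b_2}\cdots\partial_{1,b_r}w$, which stays in $\mathcal{A}_z^0$). Here your justification (``one finds its $\mathrm{Const}$ contains an $e_z$ or is zero'') restates the claim rather than proving it, and the emphasis on boundary deletions is misplaced, since $\partial_{1,0}$ also deletes interior letters. The missing observation, which is the entire content of the paper's proof, is that $\partial_{1,0}$ only ever deletes letters $e_0$ or $e_1$ and hence preserves the number of $e_z$'s in each monomial; therefore $\mathrm{Const}\circ\partial_{1,0}$ annihilates every monomial containing an $e_z$, while on $z$-free words $\partial_{1,0}=-\partial_{z,0}-\partial_{z,1}=0$ by Proposition \ref{prop:deriv_sum_z0_z1_10}. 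You cite this identity but never assemble these two cases into the required vanishing. (Your treatment of the first tensor factor, $\mathrm{Const}(\cdots)\in\mathcal{A}\cap\mathcal{A}_z^0=\mathcal{A}^0$, is fine.)

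The ``slick alternative'' is circular. Lemma \ref{lem:idem} can only be invoked once one knows $\varphi_{\shuffle}(\mathcal{A}_z^0)\subset\mathcal{A}_z^0$, so that the composite $\varphi_{\otimes}\circ\varphi_{\shuffle}$ is defined on $\mathcal{A}_z^0$; and that containment is precisely what Proposition \ref{prop:phi_tensor_in_good_space} supplies, since a surviving word $e_{b_1}\cdots e_{b_r}$ with $b_1=0$ would make $\mathrm{Const}(\cdots)\shuffle e_{b_1}\cdots e_{b_r}$ contain monomials beginning with $e_0$. Moreover the case to which you want to reduce is ``immediate from the $\delta$-identity'' only for words of $\mathbb{Z}\langle e_0,e_z\rangle$ that already lie in $\mathcal{A}_z^0$: for $e_0e_z$ one has $\partial_{1,z}(e_0e_z)=e_0$, so $\varphi_{\otimes}(e_0e_z)$ contains $e_0\otimes e_z$ with $e_0\notin\mathcal{A}^0$. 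Whether the words you would reduce to begin with $e_0$ is exactly the question being asked, so the reduction presupposes the proposition.
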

\begin{proof}
From the definition of $\varphi_{\otimes}$, it is enough to prove
that ${\rm Const}(\partial_{1,0}(u))=0$ for $u\in\mathcal{A}_{z}^{0}$.
For a word $w=e_{a_{1}}\cdots e_{a_{n}}$, put 
\[
\deg_{z}(w)=\#\{1\leq i\leq n\mid a_{i}=z\}.
\]
Put 
\[
\mathcal{A}_{z}^{0,k}:=\bigoplus_{\deg_{z}(w)=k}\mathbb{Z}w.
\]
Then $\partial_{1,0}(\mathcal{A}_{z}^{0,k})\subset\mathcal{A}_{z}^{0,k}$.
Therefore ${\rm Const}(\partial_{1,0}(u))=0$ for all $u\in\bigoplus_{k\geq1}\mathcal{A}_{z}^{0,k}$.
Since $\partial_{1,0}=-\partial_{z,0}-\partial_{z,1}$, $\partial_{1,0}(u)=0$
for $u\in\mathcal{A}_{z}^{0,0}$. This proves the claim.\end{proof}
\begin{prop}
\label{prop:phi_shuffle_hom}For $u,v\in\mathcal{A}_{z}^{0}$,
\[
\varphi_{\shuffle}(u\shuffle v)=\varphi_{\shuffle}(u)\shuffle\varphi_{\shuffle}(v).
\]
\end{prop}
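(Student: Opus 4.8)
The plan is to prove the multiplicativity of $\varphi_{\shuffle}$ for the shuffle product by reducing it, via Lemma \ref{lem:idem} and Proposition \ref{prop:ISD_equiv}, to the corresponding statement for $\varphi_{\otimes}$, which in turn follows from the fact that $\mathrm{Const}$ and the operators $\partial_{1,b}$ ($b\in\{0,z\}$) interact well with the shuffle product. More precisely, I would first establish the tensor-level identity
\[
\varphi_{\otimes}(u\shuffle v)=\sum_{(\alpha,\beta)}\bigl(\text{(first tensor factor of }\varphi_{\otimes}(u)_\alpha)\shuffle(\text{of }\varphi_{\otimes}(v)_\beta)\bigr)\otimes\bigl(e_{*}\shuffle e_{*}\bigr),
\]
i.e. that $\varphi_{\otimes}$ is a coalgebra-type morphism intertwining $\shuffle$ on $\mathcal{A}_z^0$ with $\shuffle\otimes\shuffle$ on $\mathcal{A}^0\otimes\mathbb{Z}\langle e_0,e_z\rangle$. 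The key input is part (1) of Theorem \ref{thm:Three_applications}, which gives the Leibniz rule $\partial_{1,b}(u\shuffle v)=(\partial_{1,b}u)\shuffle v+u\shuffle(\partial_{1,b}v)$ for $b\in\{0,1\}$; since $\partial_{1,b}$ is not among $\partial_{z,c}$ I should note that the excerpt's Theorem \ref{thm:Three_applications}(1) is stated for $\partial_{z,c}$, so I would instead use $\partial_{1,0}=-\partial_{z,0}-\partial_{z,1}$ on $\mathcal{A}_z^0$ (Proposition \ref{prop:deriv_sum_z0_z1_10}) to transfer the Leibniz rule to $\partial_{1,0}$, and handle $\partial_{1,z}$ — which does not appear — is not needed since the indices $b_i$ range over $\{0,z\}$ and $\partial_{1,z}$ is never invoked; wait, the sum in $\varphi_\otimes$ uses $\partial_{1,b_i}$ with $b_i\in\{0,z\}$, so I genuinely need the Leibniz rule for both $\partial_{1,0}$ and $\partial_{1,z}$, and for the latter I would either cite \cite{HS_AlgebraicDiff} directly or derive it the same way.

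Concretely I would argue as follows. Expand $\varphi_{\otimes}(u\shuffle v)=\sum_{r,\,b_1,\dots,b_r}\mathrm{Const}(\partial_{1,b_1}\cdots\partial_{1,b_r}(u\shuffle v))\otimes e_{b_1}\cdots e_{b_r}$. Applying the iterated Leibniz rule for the $\partial_{1,b_i}$'s, $\partial_{1,b_1}\cdots\partial_{1,b_r}(u\shuffle v)$ becomes a sum over all ways of distributing the operators $\partial_{1,b_1},\dots,\partial_{1,b_r}$ (in order) between $u$ and $v$: that is, over subsets $S\subseteq\{1,\dots,r\}$, the term $(\overrightarrow{\prod_{i\in S}}\partial_{1,b_i})u \shuffle (\overrightarrow{\prod_{i\notin S}}\partial_{1,b_i})v$. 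Next, because $\mathrm{Const}$ is an algebra homomorphism for $\shuffle$ (stated in the paragraph defining $\mathrm{Const}$: $\mathrm{Const}(u\shuffle v)=\mathrm{Const}(u)\shuffle\mathrm{Const}(v)$), I can pull $\mathrm{Const}$ inside the shuffle. Finally, regrouping the sum over $(r;b_1,\dots,b_r;S)$ as a sum over a pair of sequences — the subsequence indexed by $S$ and its complement — together with the identity $e_{b_1}\cdots e_{b_r}=\sum (\text{subsequence of }S)\shuffle(\text{complementary subsequence})$ summed over interleavings (this is exactly the defining recursion of $\shuffle$ read backwards, and since the $e_{b_i}$ commute under no relation we must keep track of the interleaving combinatorially), one recognizes the right-hand side as $\varphi_{\otimes}(u)\shuffle\varphi_{\otimes}(v)$ under the product $(a\otimes x)\shuffle(b\otimes y)=(a\shuffle b)\otimes(x\shuffle y)$. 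Hence $\varphi_{\otimes}(u\shuffle v)=\varphi_{\otimes}(u)\shuffle\varphi_{\otimes}(v)$; composing with the shuffle-multiplication map $\mathcal{A}^0\otimes\mathbb{Z}\langle e_0,e_z\rangle\to\mathcal{A}_z^0$ — which is an algebra map for $\shuffle$ since $\mathcal{A}_z^0$ is commutative under $\shuffle$ — yields $\varphi_{\shuffle}(u\shuffle v)=\varphi_{\shuffle}(u)\shuffle\varphi_{\shuffle}(v)$.

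The main obstacle will be the bookkeeping in the regrouping step: one must verify that the combinatorial identity "sum over $(r, b_1\cdots b_r, S)$ of (word on $S$)$\otimes$(word on $S^c$)$\otimes e_{b_1}\cdots e_{b_r}$" exactly matches "sum over pairs of words $(c_1\cdots c_p, d_1\cdots d_q)$ of that pair tensored with $(e_{c_1}\cdots e_{c_p})\shuffle(e_{d_1}\cdots e_{d_q})$", i.e. that grouping the $r!/(|S|!|S^c|!)$-fold multiplicity of each fixed pair of subwords reproduces precisely the shuffle of the corresponding third-factor words. This is a standard "the shuffle algebra is the graded dual of the deconcatenation coalgebra" computation, so it is routine but needs to be written carefully; I would phrase it cleanly by noting that $\varphi_\otimes$ factors as $w\mapsto\sum_r(\text{iterated }\partial\text{'s applied to }w)$ and that both $\mathrm{Const}$ and "record the $b_i$'s as a shuffle-word" are algebra maps, so the only genuinely combinatorial point is the Leibniz expansion, which is an easy induction on $r$. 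A secondary, purely bureaucratic point is confirming that all intermediate terms lie in $\mathcal{A}_z^0$ so that the identity $\partial_{1,0}=-\partial_{z,0}-\partial_{z,1}$ and the homomorphism property of $\mathrm{Const}$ are legitimately applicable; this follows from $\partial_{\alpha,\beta}(\mathcal{A}_z^0)\subset\mathcal{A}_z^0$ noted after the definition of $\partial_{\alpha,\beta}$ and from $u\shuffle v\in\mathcal{A}_z^0$ for $u,v\in\mathcal{A}_z^0$.
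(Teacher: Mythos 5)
Your proposal is correct and follows essentially the same route as the paper's proof: iterate the Leibniz rule for the operators $\partial_{1,b}$ over $u\shuffle v$, pull $\mathrm{Const}$ inside using its $\shuffle$-multiplicativity, and regroup the third-factor words via the identity $\sum_{\sigma\in S_{k,l}}e_{a_{\sigma^{-1}(1)}}\cdots e_{a_{\sigma^{-1}(k+l)}}=e_{a_{1}}\cdots e_{a_{k}}\shuffle e_{a_{k+1}}\cdots e_{a_{k+l}}$. Your side-worry about $\partial_{1,z}$ is resolved even more directly than you suggest, since $\partial_{\alpha,\beta}=\partial_{\beta,\alpha}$ by definition, so $\partial_{1,z}=\partial_{z,1}$ is already covered by Theorem \ref{thm:Three_applications}(1).
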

\begin{proof}
By (1) of Theorem \ref{thm:Three_applications}, 
\[
\partial_{1,b_{1}}\cdots\partial_{1,b_{r}}(u\shuffle v)=\sum_{k=0}^{r}\sum_{\sigma\in S_{k,r-k}}\partial_{1,b_{\sigma(1)}}\cdots\partial_{1,b_{\sigma(k)}}(u)\shuffle\partial_{1,b_{\sigma(k+1)}}\cdots\partial_{1,b_{\sigma(r)}}(v)
\]
where $S_{k,r-k}:=\left\{ \sigma\in\mathfrak{S}_{r}\left|\substack{\sigma(1)<\cdots<\sigma(k)\\
\sigma(k+1)<\cdots<\sigma(r)
}
\right.\right\} $. Since 
\[
\mathrm{Const}\left(w\shuffle w'\right)=\mathrm{Const}(w)\shuffle\mathrm{Const}(w')
\]
for $w,w'\in\mathcal{A}_{z}$, 
\begin{align*}
\varphi_{\shuffle}(u\shuffle v)= & \sum_{\substack{k,l\in\mathbb{Z}_{\geq0}}
}\sum_{\substack{\sigma\in S_{k,l}\\
b_{1},\ldots,b_{k+l}\in\{0,z\}
}
}\mathrm{Const}(\partial_{b_{\sigma(1)},\ldots,b_{\sigma(k)}}(u))\shuffle\mathrm{Const}(\partial_{b_{\sigma(k+1)},\ldots,b_{\sigma(k+l)}}(v))\\
 & \qquad\qquad\qquad\qquad\qquad\qquad\qquad\qquad\qquad\qquad\qquad\qquad\qquad\shuffle e_{b_{1}}\cdots e_{b_{k+l}}.
\end{align*}
For each $\sigma\in S_{k,l}$, we change the labeling of the suffices
by $b_{\sigma(i)}=a_{i}$ for $1\leq i\leq k+l$. Thus,
\begin{align*}
 & \sum_{\substack{\sigma\in S_{k,l}}
}\sum_{\substack{b_{1},\ldots,b_{k+l}\in\{0,z\}}
}\mathrm{Const}(\partial_{b_{\sigma(1)},\ldots,b_{\sigma(k)}}(u))\shuffle\mathrm{Const}(\partial_{b_{\sigma(k+1)},\ldots,b_{\sigma(k+l)}}(v))\\
 & \qquad\qquad\qquad\qquad\qquad\qquad\qquad\qquad\qquad\qquad\qquad\qquad\qquad\shuffle e_{b_{1}}\cdots e_{b_{k+l}}\\
= & \sum_{\substack{\sigma\in S_{k,l}}
}\sum_{\substack{a_{1},\ldots,a_{k+l}\in\{0,z\}}
}\mathrm{Const}(\partial_{a_{1},\ldots,a_{k}}(u))\shuffle\mathrm{Const}(\partial_{a_{k+1},\ldots,a_{k+l}}(v))\\
 & \qquad\qquad\qquad\qquad\qquad\qquad\qquad\qquad\qquad\qquad\qquad\qquad\qquad\shuffle e_{a_{\sigma^{-1}(1)}}\cdots e_{a_{\sigma^{-1}(k+l)}}.
\end{align*}
Since $\sum_{\substack{\sigma\in S_{k,l}}
}e_{a_{\sigma^{-1}(1)}}\cdots e_{a_{\sigma^{-1}(k+l)}}=e_{a_{1}}\cdots e_{a_{k}}\shuffle e_{a_{k+1}}\cdots e_{a_{k+l}}$, it follows that $\varphi_{\shuffle}(u\shuffle v)=\varphi_{\shuffle}(u)\shuffle\varphi_{\shuffle}(v).$\end{proof}
\begin{prop}
\label{prop:phi_stuffle_hom}For $u\in\mathcal{A}^{0}$ and $v\in\mathcal{A}_{z}^{0}$,
\[
\varphi_{*}(u*v)=u*\varphi_{*}(v).
\]
\end{prop}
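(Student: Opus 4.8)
The plan is to reduce the identity to a commutation property between the iterated operators $\partial_{1,b_{1}}\cdots\partial_{1,b_{r}}$ (with $b_{i}\in\{0,z\}$) and left stuffle multiplication by $u$, and then to transport that property through $\mathrm{Const}$ and through the trailing factor $e_{b_{1}}\cdots e_{b_{r}}$.

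The first and main step is the claim: for $u\in\mathcal{A}^{0}$, $v\in\mathcal{A}_{z}^{0}$, $r\geq0$ and $b_{1},\ldots,b_{r}\in\{0,z\}$,
\[
\partial_{1,b_{1}}\cdots\partial_{1,b_{r}}(u*v)=u*\bigl(\partial_{1,b_{1}}\cdots\partial_{1,b_{r}}v\bigr).
\]
Since $\partial_{\alpha,\beta}$ depends only on the unordered pair $\{\alpha,\beta\}$, we have $\partial_{1,z}=\partial_{z,1}$, so for $b_{i}=z$ the one-step identity $\partial_{1,z}(u*v')=u*(\partial_{1,z}v')$ is exactly part (2) of Theorem \ref{thm:Three_applications} with $c=1$, valid for every $v'\in\mathcal{A}_{z}$. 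For $b_{i}=0$ I would use Proposition \ref{prop:deriv_sum_z0_z1_10}, which gives $\partial_{1,0}=-\partial_{z,0}-\partial_{z,1}$ on $\mathcal{A}_{z}^{0}$: applying this to $u*v'$, which lies in $\mathcal{A}_{z}^{0}$ because $\mathcal{A}_{z}^{0}$ is an $(\mathcal{A}^{0},*)$-module, and then using part (2) of Theorem \ref{thm:Three_applications} for $c=0,1$ together with bilinearity of $*$, one gets $\partial_{1,0}(u*v')=-u*\partial_{z,0}v'-u*\partial_{z,1}v'=u*(\partial_{1,0}v')$, the last equality being Proposition \ref{prop:deriv_sum_z0_z1_10} again, now applied to $v'\in\mathcal{A}_{z}^{0}$. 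The full claim then follows by induction on $r$: peel off the innermost operator $\partial_{1,b_{r}}$ via the one-step identity, and note that $\partial_{\alpha,\beta}(\mathcal{A}_{z}^{0})\subset\mathcal{A}_{z}^{0}$, so that $\partial_{1,b_{r}}v\in\mathcal{A}_{z}^{0}$ and the induction hypothesis applies.

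Granting the claim, the rest is formal. Starting from the definition of $\varphi_{*}$ and substituting the claim,
\[
\varphi_{*}(u*v)=\sum_{r,\vec{b}}\mathrm{Const}\bigl(u*(\partial_{1,b_{1}}\cdots\partial_{1,b_{r}}v)\bigr)*e_{b_{1}}\cdots e_{b_{r}}=\sum_{r,\vec{b}}\bigl(u*\mathrm{Const}(\partial_{1,b_{1}}\cdots\partial_{1,b_{r}}v)\bigr)*e_{b_{1}}\cdots e_{b_{r}},
\]
where the second equality is Proposition \ref{prop:ConstStuffle}. Finally, by associativity of the generalized stuffle product (\cite[Proposition 6]{HS_AlgebraicDiff}) each summand equals $u*\bigl(\mathrm{Const}(\partial_{1,b_{1}}\cdots\partial_{1,b_{r}}v)*e_{b_{1}}\cdots e_{b_{r}}\bigr)$, so summing over $r$ and $\vec{b}$ gives $u*\varphi_{*}(v)$.

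I expect the only real subtlety to be the $b_{i}=0$ case of the claim, since $\partial_{1,0}$ is not directly covered by part (2) of Theorem \ref{thm:Three_applications}: one has to invoke the relation $\partial_{z,0}+\partial_{z,1}+\partial_{1,0}=0$, which holds only on $\mathcal{A}_{z}^{0}$, and therefore keep careful track that both $u*v'$ and each partial derivative $\partial_{1,b_{i+1}}\cdots\partial_{1,b_{r}}v$ remain in $\mathcal{A}_{z}^{0}$ throughout the induction. This is precisely where the hypothesis $u\in\mathcal{A}^{0}$ (rather than merely $u\in\mathcal{A}^{1}$) is used, via the module structure of $\mathcal{A}_{z}^{0}$; everything else is routine.
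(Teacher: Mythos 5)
Your proof is correct and follows essentially the same route as the paper's: commute the iterated $\partial_{1,b_{i}}$ past $u\,{*}\,{-}$ via Theorem \ref{thm:Three_applications}(2), then use $\mathrm{Const}(u*w)=u*\mathrm{Const}(w)$ and associativity of $*$. In fact you are more careful than the paper, which cites Theorem \ref{thm:Three_applications}(2) without addressing the $\partial_{1,0}$ operators; your reduction of $\partial_{1,0}$ to $-\partial_{z,0}-\partial_{z,1}$ on $\mathcal{A}_{z}^{0}$ (using the module structure to keep $u*v'$ in $\mathcal{A}_{z}^{0}$) is exactly the missing detail.
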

\begin{proof}
By (2) of Theorem \ref{thm:Three_applications}, we have
\begin{align*}
\varphi_{*}(u*v)= & \sum_{r\geq0}\sum_{b_{1},\dots,b_{r}\in\{0,z\}}{\rm Const}(u*\partial_{1,b_{1}}\cdots\partial_{1,b_{r}})*e_{b_{1}}\cdots e_{b_{r}}\\
= & u*\sum_{r\geq0}\sum_{b_{1},\dots,b_{r}\in\{0,z\}}{\rm Const}(\partial_{1,b_{1}}\cdots\partial_{1,b_{r}})*e_{b_{1}}\cdots e_{b_{r}}\\
= & u*\varphi_{*}(v).
\end{align*}
\end{proof}
\begin{prop}
\label{prop:bijectivity_f}The homomorphism $f:\mathcal{A}_{z}^{-2}\otimes(\mathcal{A}_{z}^{0}\cap\mathbb{Z}\left\langle e_{1},e_{z}\right\rangle )\simeq\mathcal{A}_{z}^{0}$
defined by
\[
f(u\otimes v)=u\shuffle v
\]
is bijective.\end{prop}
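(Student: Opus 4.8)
The plan is to prove bijectivity by identifying an explicit filtration on which $f$ becomes triangular, and then checking that the associated graded map is an isomorphism of free $\mathbb{Z}$-modules of the right rank. First I would introduce the grading by total degree $n$; since shuffle preserves degree, $f$ restricts to a map of the finite-rank free $\mathbb{Z}$-modules in each degree $n$, so it suffices to treat each $n$ separately. Within degree $n$, the natural secondary invariant is the number of trailing $e_1$'s (equivalently, the decomposition $w = w_0 \shuffle e_1^i$ that already appears in the definition of ${\rm reg}_\shuffle$); I would filter $\mathcal{A}_z^0$ in degree $n$ by $F_j = \{\, w \mid w \text{ is a } \mathbb{Z}\text{-combination of words with at most } j \text{ trailing } e_1\text{'s in their shuffle-normal form}\,\}$, or more cleanly use the isomorphism $\mathcal{A}_z^0 \simeq \mathcal{A}_z^0 \cap (\mathbb{Z} \oplus e_z \mathbb{Z}\langle e_0,e_1,e_z\rangle \cup \dots)$. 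The cleanest route, though, is the counting argument: show $f$ is injective and then compare ranks.

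Concretely, the key steps I would carry out are: (i) check that $f$ is well-defined, i.e. $u \shuffle v \in \mathcal{A}_z^0$ for $u \in \mathcal{A}_z^{-2}$ and $v \in \mathcal{A}_z^0 \cap \mathbb{Z}\langle e_1, e_z\rangle$ — this is immediate since $\mathcal{A}_z^0$ is a subring under $\shuffle$; (ii) exhibit an inverse, which is really the content of ${\rm reg}_{\{z,1\}}$ promised in the text. For this I would define a "regularization" map that strips trailing $e_1$'s: given $w \in \mathcal{A}_z^0$, write it uniquely as $w = \sum_i g_i(w) \shuffle e_1^i$ with $g_i(w)$ having no trailing $e_1$; the subtlety is that the pieces $g_i(w)$ must land in $\mathcal{A}_z^{-2}$. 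A word in $\mathcal{A}_z^0$ with no trailing $e_1$ must end in $e_0$ or $e_z$, and must begin (by the defining conditions of $\mathcal{A}_z^0$) with $e_1$ or $e_z$; hence it lies in $\mathbb{Z} \oplus e_1 \mathcal{A}_z e_0 \oplus e_1 \mathcal{A}_z e_z \oplus e_z \mathcal{A}_z e_0 \oplus e_z \mathcal{A}_z e_z$. That is slightly larger than $\mathcal{A}_z^{-2} = \mathbb{Z} \oplus e_1 \mathcal{A}_z e_0 \oplus e_z \mathcal{A}_z e_0$, so the naive "strip $e_1$" map is not yet the inverse — one must also "strip trailing $e_z$'s modulo shuffles into the $\langle e_1, e_z\rangle$ factor," which is exactly why the target of the second tensor factor is $\mathbb{Z}\langle e_1, e_z\rangle$ and not $\mathbb{Z}\langle e_1\rangle$. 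So the real statement is: iteratively split off trailing letters from $\{e_1, e_z\}$ until the remaining prefix ends in $e_0$; the collection of all prefixes ending in $e_0$ (together with the unit) is precisely $\mathcal{A}_z^{-2}$.

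The honest way to organize this is: define $V := \mathcal{A}_z^0 \cap \mathbb{Z}\langle e_1, e_z\rangle$ and note $V$ is a free $\mathbb{Z}$-module with basis the words in $e_1, e_z$ that begin with $e_1$ or $e_z$ — but every nonempty word in $e_1,e_z$ begins with $e_1$ or $e_z$, so $V$ is spanned by all words in $e_1, e_z$ (plus $1$), i.e. $V = \mathbb{Z}\langle e_1, e_z\rangle$. A standard structure theorem for shuffle algebras (Radford's theorem over $\mathbb{Q}$, or a direct triangular argument over $\mathbb{Z}$ using the Lyndon-word / trailing-letter filtration) gives that $\mathcal{A}_z^0$ is free as a shuffle-module over... — rather than invoke heavy machinery, I would give the elementary triangular argument: order words by degree, then by the reversed word lexicographically with $e_0 < e_1, e_z$; then for $u \in \mathcal{A}_z^{-2}$ (leading-from-the-right letter $e_0$) and $v \in \mathbb{Z}\langle e_1, e_z\rangle$ a word of length $\ell$, the shuffle $u \shuffle v$ has a unique term of maximal "reversed-lex" type, namely the concatenation $uv$, appearing with coefficient $1$ (all other shuffle terms interleave an $e_0$ from $u$'s tail... wait, $u$ may end in $e_0$; the point is the $\ell$ trailing slots). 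Precisely: among all shuffle terms, exactly one has its last $\ell$ letters all in $\{e_1, e_z\}$ and equal to $v$, namely $uv$, because $u$ ends in $e_0 \notin \{e_1,e_z\}$. This gives a unitriangular change of basis from $\{uv\}$ to $\{u \shuffle v\}$, hence $f$ is an isomorphism over $\mathbb{Z}$.

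The main obstacle I expect is pinning down the correct indexing set for the first tensor factor: one must verify that "words of $\mathcal{A}_z^0$ not ending in a letter of $\{e_1, e_z\}$" is exactly $\mathcal{A}_z^{-2}$, and that the complementary data is a free basis of $\mathbb{Z}\langle e_1, e_z\rangle$. The forward inclusion ($\mathcal{A}_z^{-2}$ words end in $e_0$, so they qualify) is trivial; the reverse requires checking that a word of $\mathcal{A}_z^0$ ending in $e_0$ automatically begins with $e_1$ or $e_z$ and so lies in $e_1\mathcal{A}_z e_0 \oplus e_z \mathcal{A}_z e_0$ — which follows directly from the defining description of $\mathcal{A}_z^0$ (its nonconstant part lies in $\bigoplus_{a \in \{1,z\}} e_a \mathcal{A}_z e_b$ with $b \in \{0,z\}$), the case $b = z$ being excluded by our "ends in $e_0$" hypothesis. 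Once this combinatorial bookkeeping is settled, the unitriangularity argument is routine and yields bijectivity over $\mathbb{Z}$ with no denominators.
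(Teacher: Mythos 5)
Your overall strategy --- filter by the length of the maximal trailing block of letters from $\{e_1,e_z\}$ and observe that, modulo lower filtration steps, $u\shuffle v$ equals the concatenation $uv$ --- is exactly the argument the paper gives, and your unitriangularity step (the unique shuffle term whose last $\ell$ letters form $v$ is $uv$, because $u$ ends in $e_0$) is correct. However, there is a genuine error in the step you yourself single out as the crux, namely the identification of the second tensor factor. You claim $V:=\mathcal{A}_z^0\cap\mathbb{Z}\langle e_1,e_z\rangle=\mathbb{Z}\langle e_1,e_z\rangle$ on the grounds that ``every nonempty word in $e_1,e_z$ begins with $e_1$ or $e_z$.'' This checks only the condition on the first letter; the definition of $\mathcal{A}_z^0$ also constrains the last letter to lie in $\{0,z\}$, so a nonempty word of $\mathbb{Z}\langle e_1,e_z\rangle$ lies in $\mathcal{A}_z^0$ only if it ends in $e_z$. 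For instance $e_1\notin\mathcal{A}_z^0$, hence $V\subsetneq\mathbb{Z}\langle e_1,e_z\rangle$; in fact $V=\mathbb{Z}\oplus\mathbb{Z}e_z\oplus\bigoplus_{a\in\{1,z\}}e_a\mathbb{Z}\langle e_1,e_z\rangle e_z$.

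With your $V$ the statement you reduce to is false, not merely unproved: the concatenation map would send $1\otimes e_1$ to $e_1\notin\mathcal{A}_z^0$, and the ranks do not match (in degree $2$ your domain has rank $2+4=6$ while $\mathcal{A}_z^0$ has rank $4$), so no triangularity argument can rescue it. The repair is small: with the correct $V$, every word $w\in\mathcal{A}_z^0$ factors uniquely as $w=uv$ where $v$ is its maximal suffix of letters in $\{e_1,e_z\}$; then $u$ is either empty or begins with $e_1$ or $e_z$ and ends with $e_0$ (so $u\in\mathcal{A}_z^{-2}$), and $v$ is either empty or ends with $e_z$ (so $v\in V$, since $w$ itself must end in $e_0$ or $e_z$). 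This gives the bijection on bases of the associated graded pieces, after which your triangularity argument concludes exactly as in the paper.
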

\begin{proof}
Let $X_{k}=\{u\in\mathcal{A}_{z}^{0}\cap\mathbb{Z}\left\langle e_{1},e_{z}\right\rangle \mid\deg u\leq k\}$.
Put $A=\mathcal{A}_{z}^{-2}\otimes(\mathcal{A}_{z}^{0}\cap\mathbb{Z}\left\langle e_{1},e_{z}\right\rangle )$.
Define filtrations $F_{k}$ on $A$ and $\mathcal{A}_{z}^{0}$ by
$F_{k}A=\mathcal{A}_{z}^{-2}\otimes X_{k}$ and $F_{k}\mathcal{A}_{z}^{0}=\mathcal{A}_{z}^{-2}X_{k}$.
Put ${\rm gr}_{k}A=F_{k}A/F_{k-1}(A)$ and ${\rm gr}_{k}\mathcal{A}_{z}^{0}=F_{k}A_{z}^{0}/F_{k-1}\mathcal{A}_{z}^{0}$.
Then the induced map ${\rm gr}_{k}f:{\rm gr}_{k}A\to{\rm gr}_{k}\mathcal{A}_{z}^{0}$
of each graded piece is obviously bijective since the shuffle product
map $u\otimes v\mapsto u\shuffle v$ is equal to just a concatenation
map $u\otimes v\mapsto uv$. 
\end{proof}

\section{A Table of confluence relations\label{sec:Table-of-relation}}

By virtue of the explicit representation of the standard relations
(Proposition \ref{prop:ISD_equiv}, (1)), the confluence relations
can be explicitly given as 
\[
\left\{ \left.\lambda(w-\varphi_{\shuffle}(w))\right|w\in\mathcal{A}_{z}^{0}\right\} .
\]
Thus, we give a table of the confluence relations up to weight 4 as
follows. $ $We omit the case $w\in\mathbb{Z}\left\langle e_{0},e_{1}\right\rangle \cup\mathbb{Z}\left\langle e_{0},e_{z}\right\rangle \cup\mathbb{Z}\left\langle e_{1},e_{z}\right\rangle $
since $\lambda(w-\varphi_{\shuffle}(w))=0$ in this case. For each
$w\in\mathcal{A}_{z}^{0}$, $L(\lambda(w-\varphi_{\shuffle}(w)))=0$
where
\[
L(e_{1}e_{0}^{k_{1}-1}\cdots e_{1}e_{0}^{k_{d}-1})=(-1)^{d}\zeta(k_{1},\dots,k_{d}).
\]
For example, the table below says that we can obtain a relation
\[
-3\zeta(4)+5\zeta(2,2)+13\zeta(1,3)-4\zeta(1,1,2)=0
\]
from the case $w=e_{1}e_{z}e_{1}e_{0}$.

\begin{table}[H]
\begin{tabular}{|c|c|c||c|c|c|}
\hline 
weight & $w$ & $\lambda(w-\varphi_{\shuffle}(w))$ & weight & $w$ & $\lambda(w-\varphi_{\shuffle}(w))$\tabularnewline
\hline 
\hline 
\multirow{3}{*}{3} & $e_{z}e_{1}e_{0}$ & $-e_{1}e_{0}^{2}-e_{1}^{2}e_{0}$ & \multirow{10}{*}{4} & $e_{1}e_{z}^{2}e_{0}$ & $2e_{1}e_{0}e_{1}e_{0}+6e_{1}^{2}e_{0}^{2}+3e_{1}^{3}e_{0}$\tabularnewline
\cline{2-3} \cline{5-6} 
 & $e_{1}e_{z}e_{0}$ & $2e_{1}e_{0}^{2}+2e_{1}^{2}e_{0}$ &  & $e_{1}e_{z}e_{0}e_{z}$ & $-2e_{1}e_{0}e_{1}e_{0}-6e_{1}^{2}e_{0}^{2}-3e_{1}^{3}e_{0}$\tabularnewline
\cline{2-3} \cline{5-6} 
 & $e_{1}e_{0}e_{z}$ & $-e_{1}e_{0}^{2}-e_{1}^{2}e_{0}$ &  & $e_{1}e_{z}e_{0}^{2}$ & $3e_{1}e_{0}^{3}+2e_{1}e_{0}e_{1}e_{0}+6e_{1}^{2}e_{0}^{2}$\tabularnewline
\cline{1-3} \cline{5-6} 
\multirow{7}{*}{4} & $e_{z}^{2}e_{1}e_{0}$ & $e_{1}e_{0}e_{1}e_{0}+e_{1}^{2}e_{0}^{2}+e_{1}^{3}e_{0}$ &  & $e_{1}e_{z}e_{1}e_{0}$ & $3e_{1}e_{0}^{3}+5e_{1}e_{0}e_{1}e_{0}+13e_{1}^{2}e_{0}^{2}+4e_{1}^{3}e_{0}$\tabularnewline
\cline{2-3} \cline{5-6} 
 & $e_{z}e_{0}e_{1}e_{z}$ & $-4e_{1}^{2}e_{0}^{2}-e_{1}^{3}e_{0}$ &  & $e_{1}e_{0}e_{z}^{2}$ & $e_{1}e_{0}e_{1}e_{0}+e_{1}^{2}e_{0}^{2}+e_{1}^{3}e_{0}$\tabularnewline
\cline{2-3} \cline{5-6} 
 & $e_{z}e_{0}e_{1}e_{0}$ & $-e_{1}e_{0}^{3}-4e_{1}^{2}e_{0}^{2}$ &  & $e_{1}e_{0}e_{z}e_{0}$ & $-3e_{1}e_{0}^{3}-2e_{1}e_{0}e_{1}e_{0}-6e_{1}^{2}e_{0}^{2}$\tabularnewline
\cline{2-3} \cline{5-6} 
 & $e_{z}e_{1}e_{z}e_{0}$ & $-2e_{1}e_{0}e_{1}e_{0}-6e_{1}^{2}e_{0}^{2}-3e_{1}^{3}e_{0}$ &  & $e_{1}e_{0}^{2}e_{z}$ & $e_{1}e_{0}^{3}+e_{1}e_{0}e_{1}e_{0}+e_{1}^{2}e_{0}^{2}$\tabularnewline
\cline{2-3} \cline{5-6} 
 & $e_{z}e_{1}e_{0}e_{z}$ & $4e_{1}^{2}e_{0}^{2}+e_{1}^{3}e_{0}$ &  & $e_{1}e_{0}e_{1}e_{z}$ & $e_{1}e_{0}^{3}+e_{1}e_{0}e_{1}e_{0}+e_{1}^{2}e_{0}^{2}$\tabularnewline
\cline{2-3} \cline{5-6} 
 & $e_{z}e_{1}e_{0}^{2}$ & $-e_{1}e_{0}^{3}-e_{1}e_{0}e_{1}e_{0}-e_{1}^{2}e_{0}^{2}$ &  & $e_{1}^{2}e_{z}e_{0}$ & $-3e_{1}e_{0}^{3}-2e_{1}e_{0}e_{1}e_{0}-6e_{1}^{2}e_{0}^{2}$\tabularnewline
\cline{2-3} \cline{5-6} 
 & $e_{z}e_{1}^{2}e_{0}$ & $-e_{1}e_{0}^{3}-2e_{1}e_{0}e_{1}e_{0}-6e_{1}^{2}e_{0}^{2}-2e_{1}^{3}e_{0}$ &  & $e_{1}^{2}e_{0}e_{z}$ & $e_{1}e_{0}^{3}-e_{1}e_{0}e_{1}e_{0}-e_{1}^{2}e_{0}^{2}-2e_{1}^{3}e_{0}$\tabularnewline
\hline 
\end{tabular}

\caption{The table of $\lambda(w-\varphi_{\shuffle}(w))$ for a monomial $w$.}

\end{table}

\end{document}